\documentclass[11pt]{article}
\usepackage{bbm}
\usepackage[top=1in, bottom=1.5in]{geometry}
\usepackage{color}
\usepackage{amsfonts}
\usepackage{latexsym, amssymb, amsmath, amscd, amsthm, amsxtra}
\usepackage{mathtools}
\usepackage{enumerate}
\usepackage[all]{xy}
\usepackage{mathrsfs}
\usepackage{fancyhdr}
\usepackage{listings}
\usepackage{hyperref}

\pagestyle{plain}
\usepackage{algorithm}
\usepackage{algpseudocode}
\usepackage{framed}
\usepackage{lipsum}

%\usetikzlibrary{decorations.pathreplacing,calligraphy}

% Copied from mathrsfs.sty
%\DeclareSymbolFont{rsfs}{U}{rsfs}{m}{n}
%\DeclareSymbolFontAlphabet{\mathscrsfs}{rsfs}
%\usepackage{mathrsfs}

%\usepackage{url}

\hypersetup{
	colorlinks   = true, %Colours links instead of ugly boxes
	urlcolor     = blue, %Colour for external hyperlinks
	linkcolor    = blue, %Colour of internal links
	citecolor   = red %Colour of citations
}

\newtheorem{theorem}{Theorem}[section]
\newtheorem{lemma}[theorem]{Lemma}

\newtheorem{proposition}[theorem]{Proposition}

\theoremstyle{definition}
\newtheorem{definition}[theorem]{Definition}

\newtheorem{remark}[theorem]{Remark}

% \newtheorem{Part}{Part}

% \numberwithin{equation}{section}

\usepackage{times}

\newcommand{\ER}{Erd\H{o}s-R\'enyi\ }

\newcommand{\dif}{\operatorname{d}\!}
\begin{document}

	\title{Optimal recovery of correlated \ER graphs}
	
	\author{Hang Du\footnote{Department of Mathematics, Massachusetts Institute of Technology.}}
	
	\date{\today}
	
	\maketitle
	
	\begin{abstract}
For two unlabeled graphs $G_1,G_2$ independently sub-sampled from an \ER graph $\mathbf G(n,p)$ by keeping each edge with probability $s$, we aim to recover \emph{as many as possible} of the corresponding vertex pairs. We establish a connection between the recoverability of vertex pairs and the balanced load allocation in the true intersection graph of $ G_1 $ and $ G_2 $. Using this connection, we analyze the partial recovery regime where $ p = n^{-\alpha + o(1)} $ for some $ \alpha \in (0, 1] $ and $ nps^2 = \lambda = O(1) $. We derive upper and lower bounds for the recoverable fraction in terms of $ \alpha $ and the limiting load distribution $ \mu_\lambda $ (as introduced in \cite{AS16}). These bounds coincide asymptotically whenever $ \alpha^{-1} $ is not an atom of $ \mu_\lambda $. Therefore, for each fixed $ \lambda $, our result characterizes the asymptotic optimal recovery fraction for all but countably many $ \alpha \in (0, 1] $.

	\end{abstract}
	
	\section{Introduction and main results}
	
Motivated by applications in diverse fields including social science~\cite{NS08, NS09}, computer vision~\cite{BBM05,CSS06}, computational biology~\cite{SXB08,VCP15}, and natural language processing~\cite{HNM05}, a growing body of research has focused on studying correlated networks from theoretical perspectives. Among the various models for such networks, the simplest and arguably the most canonical one---the \emph{correlated Erd\H{o}s--R\'enyi graph model} introduced in \cite{PG11}---has garnered the most attention and has been the most well-studied.

    Given a large integer $n$ with two parameters $p,s\in (0,1]$, the correlated \ER graph model is defined as follows. Let $G_0\sim \mathcal G(n,p)$ be an \ER graph on $[n]=\{1,\dots,n\}$ with edge density $p$, and let $G_1,G_2^*$ be two independent subgraphs of $G_0$ sampled by keeping each edge in $G_0$ with probability $s$ independently. Then, sample a uniform permutation $\pi^*\in \operatorname{S}_n$ and relabel the vertices of $G_2^*$ through $\pi^*$ to get another graph $G_2$ (see the more precise definition in Section~\ref{subsec-notation}). We write $\mathcal{P}^*=\mathcal P^*(n,p,s)$ for the joint distribution of $(\pi^*,G_1,G_2)$.
	The primary objective for this model is to estimate the matching $\pi^*$ from $(G_1,G_2)$ given $(\pi^*,G_1,G_2)\sim \mathcal{P}^*$. 
	
	In what follows, we focus on the case that there exist constants $0<\alpha\le 1$ and $\lambda>1$ such that $p=n^{-\alpha+o(1)}$ (where $o(1)$ is a term vanishes as $n\to\infty$) and $s$ is chosen to satisfy $nps^2=\lambda$. As will be discussed later, these assumptions naturally imply that it is impossible to recover a $1-o(1)$ fraction of the coordinates in $\pi^*$ \cite{WXY22}, while recovering a non-vanishing fraction of coordinates in $\pi^*$ remains feasible (at least for large $\lambda$ \cite{DD23b}). This raises a natural question: what is the optimal recovery fraction of $\pi^*$ across all estimators based on $(G_1,G_2)$? In this paper, we tackle this question and provide an almost tight characterization of the optimal recovery fraction.
	
	For two matchings $\pi_1,\pi_2\in \operatorname{S}_n$, we write $\operatorname{ov}(\pi_1,\pi_2)\equiv \#\{i\in [n]:\pi_1(i)=\pi_2(i)\}$ as the size of their overlap.
	Our main theorem gives upper and lower bounds for the asymptotic optimal value of $\operatorname{ov}(\widetilde{\pi},\pi^*)$ over all estimators $\widetilde{\pi}=\widetilde{\pi}(G_1,G_2)$. 
	
	\begin{theorem}\label{thm-main-ER}
		Fix two constants $\alpha,\lambda$ such that $0<\alpha\le 1$ and $\lambda>1$. Let $\mu_\lambda$ be the limiting load distribution introduced in \cite{AS16} (as formally defined in Proposition~\ref{prop-weak-convergence-of-empirical-measure}) and  $F_\lambda(x)\equiv\mu_\lambda\big((x,+\infty)\big)$. Let $p=p(n),s=s(n)$ satisfy that $p=n^{-\alpha+o(1)}$ as $n\to\infty$ and $nps^2=\lambda$ for any $n$. Then for any $\varepsilon>0$, the following hold for $(\pi^*,G_1,G_2)\sim \mathcal P^*=\mathcal P^*(n,p,s)$ as $n\to\infty$:
		\begin{itemize}
			\item [(i)] There exists an estimator $\widetilde{\pi}=\widetilde{\pi}(G_1,G_2)$, such that with $1-o(1)$ probability,
			\[
			\operatorname{ov}(\widetilde{\pi},\pi^*)\ge\big(F_\lambda(\alpha^{-1}+\varepsilon)-\varepsilon\big)n\,.
			\]
			\item [(ii)] There is no estimator $\widehat{\pi}=\widehat{\pi}(G_1,G_2)$ such that with non-vanishing probability,
			\[
			\operatorname{ov}(\widehat{\pi},\pi^*)\ge \big(F_\lambda(\alpha^{-1}-\varepsilon)+\varepsilon\big)n\,.
			\]
		\end{itemize}
	\end{theorem}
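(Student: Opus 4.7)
The plan is to reduce the recovery problem to a load-allocation problem on the \emph{true intersection graph} $I^*$, defined on $[n]$ with edge set $\{\{u,v\}:\{u,v\}\in E(G_1)\text{ and }\{\pi^*(u),\pi^*(v)\}\in E(G_2)\}$. Marginally $I^*\sim\mathcal G(n,ps^2)=\mathcal G(n,\lambda/n)$, so it converges locally to the Poisson$(\lambda)$ Galton--Watson tree, and the load-balancing theory of \cite{AS16} describes the limiting empirical load distribution $\mu_\lambda$. The heuristic link is that each $I^*$-edge incident to $v$ carries $\log(1/p)\approx \alpha\log n$ bits of information about $\pi^*(v)$, while determining $\pi^*(v)$ among $n$ candidates requires $\log n$ bits; hence $v$ is recoverable only if more than $\alpha^{-1}$ intersection edges can be ``charged'' to it under an allocation scheme where each edge serves one endpoint, i.e., only if its load in a suitable orientation exceeds $\alpha^{-1}$.

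For the impossibility part (ii), I would condition on $I^*$ and study the Bayes-optimal posterior of $\pi^*$ given $(G_1,G_2)$. For any orientation $\sigma$ of $I^*$ and any vertex $v$ with $\sigma$-load $\le \alpha^{-1}-\varepsilon$, one wants to produce a ``swap partner'' $w\in[n]$ such that transposing $\pi^*(v)$ and $\pi^*(w)$ yields another matching with likelihood comparable to that of $\pi^*$ under $(G_1,G_2)$. The birthday-paradox estimate $np^{\alpha^{-1}-\varepsilon}=n^{\varepsilon\alpha+o(1)}\to\infty$ ensures these partners exist in abundance. Aggregating over low-load vertices and invoking the AS16 convergence of load distributions, the asymptotic fraction of vertices with load above $\alpha^{-1}-\varepsilon$ is $F_\lambda(\alpha^{-1}-\varepsilon)+o(1)$, which gives the claimed upper bound on $\operatorname{ov}(\widehat\pi,\pi^*)/n$ with high probability.

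For the achievability part (i), I would build the estimator in two stages. In Stage~1, apply an existing partial matching procedure (e.g.\ the Otter-threshold algorithm of \cite{DD23b}) to obtain a positive-density set $M_0$ of correctly matched seed vertices. In Stage~2, bootstrap: iteratively add an unmatched vertex $v$ to the matching once its number of already-matched $I^*$-neighbors (detected through $G_1$-neighbors $u$ whose $G_2$-image is adjacent to a candidate target for $v$) exceeds $\alpha^{-1}+\varepsilon$; then $np^{\alpha^{-1}+\varepsilon}=o(1)$ rules out spurious candidates, so the match is correct with high probability. The set of eventually-matched vertices corresponds to vertices whose load exceeds $\alpha^{-1}+\varepsilon$ in some feasible orientation of $I^*$, and by the \cite{AS16} characterization its asymptotic density is $F_\lambda(\alpha^{-1}+\varepsilon)-o(1)$.

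The main obstacle will be formalizing the equivalence between the orientation/load picture and the combinatorics of the matching likelihood, most acutely on the impossibility side: the swap partners must be produced simultaneously for many low-load vertices while preserving the joint likelihood under $(G_1,G_2)$, not merely the edge count of $I^*$. I expect this to require a conditional second moment estimate over alternative matchings whose structure is dictated by the orientation $\sigma$, together with local weak convergence arguments identifying the limiting load distribution on $I^*$ with $\mu_\lambda$. The algorithmic side will additionally need a careful dependency analysis of the bootstrapping iteration to prevent error propagation and to match the orientation-based description of the matchable core.
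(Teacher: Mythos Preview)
Your intuition linking recoverability to balanced loads in $I^*$ is exactly right, and the heuristic bit-counting is the correct guide. However, both halves of your plan have genuine gaps that the paper resolves by quite different means.

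\textbf{Part (i).} Your bootstrap rule ``add $v$ once it has more than $\alpha^{-1}+\varepsilon$ already-matched $I^*$-neighbors'' is a degree-threshold percolation, essentially a $k$-core peeling with $k=\lceil\alpha^{-1}+\varepsilon\rceil$. But the target set $\{v:\partial\theta(v)\ge\alpha^{-1}+\varepsilon\}$ is defined by a \emph{fractional} orientation, and for non-integer thresholds it does not coincide with any $k$-core; it is characterized variationally as the maximizer of $f_t(U)=t|E(U)|-|U|$ for $t=(\alpha^{-1}+\varepsilon)^{-1}$. A degree-based bootstrap will systematically miss vertices whose load comes from splitting edges fractionally. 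The paper instead runs a greedy iterative algorithm that, in round $k$, \emph{directly maximizes} the number of vertices with load in a prescribed interval $I_k$ (over all extensions of the current partial matching), stepping down from the largest loads to $\alpha^{-1}+\varepsilon$. The analysis then needs two further ideas you do not have: (a) an FKG argument to decouple the conditioning introduced by earlier rounds, since load is monotone in the edge set; and (b) a \emph{stability} lemma showing that a near-maximizer of $f_t$ must contain almost all of $\{v:\partial\theta(v)\ge t^{-1}+\varepsilon'\}$, which is what converts ``few mismatches'' into ``recovers almost all heavy vertices''. Without this stability step, your bootstrap could in principle stall after making no errors but matching only $o(n)$ vertices. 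Finally, the paper does not use a black-box seed from \cite{DD23b}; the iterative algorithm is self-contained.

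\textbf{Part (ii).} You correctly identify the crux: producing swap partners \emph{simultaneously} for a linear set of low-load vertices while controlling the joint likelihood. A conditional second moment over swap families is exactly the kind of argument that tends to blow up here, and you have no concrete mechanism to tame it. The paper sidesteps the swap construction entirely. It first reveals the non-light set $W$ and $\pi^*\mid_W$ for free (which only strengthens the impossibility), then establishes an $\exp(O(n))$-contiguity between the resulting conditional planted law and a correlation-free null law on $(G_1^{\mathtt U_1},G_2^{\mathtt U_2})$. Under the null law, it proves the posterior anti-concentration event fails with probability at most $\exp(-n\sqrt{\log n})$, which survives the $\exp(O(n))$ transfer. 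The $\exp(O(n))$ bound on the likelihood ratio is itself nontrivial and uses the admissibility truncation plus the subgraph-density constraint \eqref{eq-density-upper-bound} inherited from the light-vertex condition; this is where the load hypothesis actually enters the impossibility proof, not through local swaps.
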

	
	Sending $\varepsilon$ down to $0$, the first item indicates that with high probability, one can correctly recover a $\big(F(\alpha^{-1})-o(1)\big)$-fraction of coordinates. Meanwhile, the second item implies it is almost impossible to recover a $\big(F_\lambda(\alpha^{-1}-)+o(1)\big)$-fraction of coordinates, where $F_\lambda(\cdot-)$ denotes the left-hand limit of $F_\lambda(\cdot)$. These two bounds  asymptotically align if and only if $F_\lambda$ is continuous at $\alpha^{-1}$ (i.e., $\mu_\lambda$ does not have an atom at $\alpha^{-1}$). Since $F_\lambda$ is a bounded monotone function, it can have at most countably many discontinuites. Consequently, for each fixed $\lambda$, our result is tight for all but countably many $\alpha$.
	
Let $ \varrho(\lambda) $ denote the rightmost point of $ \operatorname{supp}(\mu_\lambda) $ (see also Proposition~\ref{prop-maximal-density}). The theorem above implies that recovering a non-vanishing fraction of $ \pi^* $ is achievable when $ \varrho(\lambda) > \alpha^{-1} $, whereas the opposite holds when $ \varrho(\lambda) < \alpha^{-1} $. This is, in fact, the main result of \cite{DD23b}. Therefore, Theorem~\ref{thm-main-ER} extends the previous result and provides new insights for the case when $ \varrho(\lambda) > \alpha^{-1} $. Furthermore, we conjecture that for any $ \lambda > 1 $ and any point $ t $ with $ 1 \leq t < \varrho(\lambda) $, $ t $ is a discontinuity of $ F_\lambda $ if and only if $ t $ is a rational number. Assuming this conjecture holds, our result is tight for all irrational $ \alpha $ such that $ \varrho(\lambda)^{-1} < \alpha < 1 $.

	On the other hand, when $\alpha\in (\varrho(\lambda)^{-1},1]$ is a rational number, we have evidence suggesting that the optimal recovery fraction lies strictly between $F_\lambda(\alpha^{-1}-)$ and $F_\lambda(\alpha^{-1})$. However, obtaining an explicit characterization of the threshold appears challenging, as the optimal estimator likely requires a degree of ``random guessing''. To illustrate this, consider the simplest case when $s=1$ and $np=\lambda$ (so $\alpha=1$). In this setup, $G_1$ and $G_2$ are two isomorphic \ER graphs with edge density $\frac \lambda n$. Here, the best estimator of $\pi^*$ is nothing but a uniformly random isomorphism between $G_1$ and $G_2$. Therefore, the optimal recovery corresponds to the overlap of two automorphisms of $G_1\sim \mathbf G(n,\frac\lambda n)$, chosen independently and uniformly at random. 
 
 It can be shown that, with high probability, two random automorphisms of a sparse \ER graph agree on vertices within the giant $2$-core, while disagree almost entirely outside the giant component. This suggests the optimal recovery is bounded between the sizes of giant $2$-core and the giant component, aligning with results from Theorem~\ref{thm-main-ER} (see Lemma~\ref{lem-load-1-and->1} for details). However, for vertices located between the $2$-core and the giant, things become more complicated: there is always a random subset of these vertices lying in the overlap, while its size concentrates around a (positive) constant times $n$. Unfortunately, we could not find a simple and explicit expression for this constant. We leave determining the optimal value for general $\alpha$ (particularly the case  $\alpha=1$) as a direction for future work.

 \noindent\textbf{Acknowledgement}. The author would like to thank Brice Huang, Nike Sun and Jiaming Xu for many stimulating discussions on this problem and related topics, and thank Nike Sun for providing helpful comments on early drafts of this paper. 
	
	\subsection{Related works}
	
	\noindent\textbf{Correlated \ER graphs}. The statistical properties of the correlated \ER graph model have been extensively studied in literature \cite{PG11, CK16, CK17, CKMP20, GLM21, WXY22, WXY23, HM23, DD23a, DD23b}. Two fundamental questions regarding this model are correlation detection and matching recovery. Previously, the primary focus has been on identifying the thresholds at which detection and recovery become feasible. Two breakthrough works \cite{WXY22, WXY23} determined the informational threshold for detection and exact recovery, and they also characterized the partial recovery threshold up to multiplicative constants (here, partial recovery means recovering a non-vanishing fraction of coordinates of $\pi^*$). These constant factor gaps were later filled by \cite{DD23a,DD23b}. Hence, it is fair to say the community has now gained a comprehensive understanding of informational phase transitions in both detection and recovery. 
	
	Intriguingly, except for some very sparse cases, there is typically a sharp phase transition in detection, meaning that the total variational distance between the planted model and the null model (two independent \ER graphs with the same marginals) transitions abruptly from $0$ to $1$ \cite{WXY22, DD23a}. Similarly, in the dense regime where $p=n^{o(1)}$, the recovery problem exhibits an all-or-nothing phenomenon, in which the absence of partial recovery transitions sharply to the presence of almost-exact recovery (recovery of $1-o(1)$ fraction of coordinates) \cite{WXY23}. However, for the non-dense regime where $p=n^{-\alpha+o(1)}$ for some $\alpha\in (0,1]$, such an all-or-nothing behavior no longer exhibits. Specifically, in the regime (recalling $\varrho$ as defined in Proposition~\ref{prop-maximal-density})
	\[
	\varrho^{-1}(\alpha^{-1})<nps^2=O(1)\,,
	\]
	it is known from \cite{WXY23, DD23b} that only a fraction (strictly between $0$ and $1$) of coordinates of $\pi^*$ can be recovered. There have been not many results regarding the optimal recovery fraction in this regime. To our best knowledge, the only known result was from \cite{GLM21}, which established a similar version of Theorem~\ref{thm-main-ER}-(ii) for the special case $np=O(1)$ (correspondingly $\alpha=1$). While the proof therein relies on combinatorial arguments specific to $np=O(1)$, our approach is entirely different and extends their results to any $p$ in the non-dense regime. 
	
	Beyond information-theoretical results, progressively improved efficient algorithms have been proposed to tackle the detection and recovery tasks 
	\cite{FQMKJ16, SGE17, HKPRS17, HS17,  BCLS19, DCKG19, MX20, DMWX21, MRT23, GMS24, FMWX23a, BH22, GML24, MWXY21+, MWXY23, DL23}. Moreover, based on the low-degree hardness framework introduced in \cite{HS17,HKPRS17,Hop18,BHKKMA19}, \cite{DDL23+} provides an algorithmic upper bound for the detection and exact recovery problems, suggesting that known algorithms are nearing the potential algorithmic limit. Building on \cite{DDL23+} and a revised low-degree conjecture, a very recent work \cite{Li25} establishes the same algorithmic upper bound for partial recovery as well. We also remark that our result captures the information-theoretic limit, but the estimator $\widetilde{\pi}$ in Theorem~\ref{thm-main-ER}-(i) cannot be \emph{efficiently-computable}. 
	
Given the community’s relatively comprehensive understanding of the correlated \ER graph model, various extensions and generalizations have been explored, including models with inhomogeneous edge probabilities \cite{RS23, DFW23}, subgraph-specific correlation structures \cite{HSY24}, the correlated random geometric graph model \cite{WWXY22}, and the correlated stochastic block model \cite{CDGL24}. We believe that the ideas and techniques presented in this paper can be generalized to broader settings, such as the correlated stochastic block model.
	
	\noindent\textbf{Optimal partial recovery in statistical models}. Planted signal recovery problems have consistently attracted interest from researchers in this area. While much effort has focused on determining when an estimator can achieve either positive correlation or nearly full correlation with the ground truth (corresponding to partial recovery and almost exact recovery, respectively), understanding the optimal correlation among all estimators between these thresholds presents a more nuanced and often more challenging question.

    There are several successful examples for understanding the optimal partial recovery in high- dimensional statistical models, such as the low-rank matrix recovery problem \cite{LM19, MV21}. Most of these models have a background in spin glass theory, and as a result, the optimal partial recovery—often characterized via the minimal mean squared error (MMSE)—is governed by some order parameters of the corresponding spin glass model which can be calculated using physics ideas. However, the spin glass approaches are not universally applicable. For example, they do not apply (at least superficially) to models with essential combinatorial structures, such as the stochastic block model and the correlated \ER graph model, for which the pictures are much vaguer. To our best knowledge, the known results regarding optimal recovery in combinatorial statistical models are largely restricted to the stochastic block model \cite{MNS16,CS20}. The optimal partial recovery in the stochastic block model is captured by fixed points of the belief propagation iteration, which comes from statistical physics as well. Additionally, none of the estimators appeared in the aforementioned works is one-sided, meaning that one cannot do optimal partial recovery by only estimating a subset of coordinates but making a vanishing fraction of mistakes in total. Compared to previous results, our result appears as the first one that relies on purely combinatorial considerations and provides a one-sided estimator achieving optimal partial recovery (see Section~\ref{sec-positive} for details).

	\subsection{Intuitions and proof overview}
	
	Now we turn to discuss the intuition behind Theorem~\ref{thm-main-ER} as well as our proof strategy. We begin with some notations. For two graphs $G_1,G_2$ on $[n]$ and a permutation $\pi\in\operatorname{S}_n$, we define $\mathcal H_{\pi}=([n],\mathcal E_\pi)$ as the intersection graph of $G_1,G_2$ through $\pi$. That is, for any $(i,j)\in \operatorname{E}$, $(i,j)\in \mathcal E_\pi$ if and only if $(i,j)\in E(G_1)$ and $(\pi(i),\pi(j))\in E(G_2)$. Furthermore, for $U\subset [n]$, we denote $\mathcal H_\pi(U)=(U,\mathcal E_\pi(U))$ as the induced subgraph of $\mathcal H_\pi$ on $U$. For $(\pi^*,G_1,G_2)\sim \mathcal P^*$, we let $\mu=\mu(G_1,G_2)$ be the posterior of $\pi^*$ given $(G_1,G_2)$. A straightforward calculation shows that for any $\pi\in \operatorname{S}_n$, 
	\[
	\mu(\pi)=\frac{P^{|\mathcal E_\pi|}}{\mathcal Z}\,,\text{ where }P=\frac{1-2ps+ps^2}{p(1-s)^2}\,,\text{ and }
	\mathcal Z\equiv \sum_{\pi\in \operatorname{S}_n} P^{|\mathcal E_\pi|}\,.
	\]
	
	We first give a heuristic derivation of Theorem~\ref{thm-main-ER}, explaining why we expect the optimal recovery fraction to approximate $\mu_\lambda\big((\alpha^{-1},\infty)\big)$, particularly when $\alpha^{-1}$ is not an atom of $\mu$. For now, assume that the optimal estimator for $\pi^*$ is a typical sample from the posterior $\mu$. Towards understanding $\operatorname{ov}(\pi,\pi^*)$ for $\pi\sim \mu$, we analyze the primary contribution to $\mathcal Z$. For any subset $U$ of $[n]$, consider the following lower bound on $\mathcal Z$:
	\[
	\mathcal Z \ge \sum_{\pi\in \operatorname{S}_n:\pi\mid_U=\pi^*\mid_U}P^{|\mathcal E_\pi|}=P^{|\mathcal E_{\pi^*}(U)|}\sum_{\pi\in \operatorname{S}_n:\pi\mid_U=\pi^*\mid_U}P^{|\mathcal E_{\pi}\setminus \mathcal E_\pi(U)|}\ge P^{|\mathcal E_{\pi^*}(U)|}\cdot (n-|U|)!\,.
	\]
	Since this holds for any $U\subset [n]$, we conclude (noticing $P=n^{\alpha+o(1)}$)
	\begin{equation}\label{eq-sup-U}
		\log_n \mathcal Z\ge\max_{U\subset [n]}\{\alpha|\mathcal E_{\pi^*}(U)|-|U|\}+n+o(n)\,.
	\end{equation}
	Now, if we assume this approximates an identity, then the primary contribution for $\mathcal Z$ originates from those $\pi\in \operatorname{S}_n$ such that $\pi\mid_{U^*}=\pi^*\mid_{U^*}$, where $U^*$ maximizes the right hand side of \eqref{eq-sup-U}. Therefore, heuristically, we expect the posterior measure $\mu$ to resemble the uniform distribution on the set $\{\pi\in \operatorname{S}_n:\pi\mid_{U^*}=\pi^*\mid_{U^*}\}$. Consequently, we expect $\operatorname{ov}(\pi,\pi^*)\approx |U^*|$ for $\pi\sim \mu$, suggesting that $U^*$ corresponds the optimal recoverable set of vertices. 
	
	We now elaborate the origin of $\mu_\lambda$ in terms of $U^*$. The above discussion highlights the importance of the following subgraph optimization problem:
	\[
	\text{maximizing }f_\alpha(U)\text{ over }U\subset [n],\quad\text{where }f_\alpha(U)\equiv \alpha|\mathcal E_{\pi^*}(U)|-|U|\,.
	\]
	It turns out the solution to this problem is closely related to a concept called \emph{balanced load allocation}. Specifically, there exists a function $\partial\theta:[n]\to [0,\infty)$ dictated by $\mathcal H_{\pi^*}$ called the balanced load, such that $U^*=\{i\in [n]:\partial\theta(i)\ge \alpha^{-1}\}$ (see Section~\ref{subsec-balanced-load} for details). Furthermore, since $\mathcal H_{\pi^*}$ behaves like a sparse \ER graph with average degree $\lambda$, the empirical measure of $\partial\theta$ vaguely converges to a limiting distribution $\mu_\lambda$ in probability \cite{AS16}. As a result, $|U^*|/n$ converges to $\mu\big((\alpha^{-1},\infty)\big)$ in probability as $n\to\infty$, provided that $\alpha^{-1}$ is not an atom of $\mu_\lambda$. This completes our heuristic justification. Additionally, the intuition here hints why the problem becomes subtler when $\alpha^{-1}$ is an atom of $\mu_\lambda$: there are linearly many vertices with balanced loads close to $\alpha^{-1}$, and thus the maximizer $U^*$ of $f_\alpha$ is essentially non-stable. Precisely, the size of sets asymptotically maximizing $f_\alpha(U)$ can fluctuate linearly in $n$, which leads to a more complicated structure of the posterior measure $\mu$.
	
	We next turn to discuss our actual proof strategies. While the derivation above suggests a unified approach to both the positive and negative results in Theorem~\ref{thm-main-ER}, we adopt entirely different arguments to prove the upper and lower bounds separately. We first introduce some notations: fix $\varepsilon>0$, recall $(\pi^*,G_1,G_2)\sim \mathcal P^*$ and $\mathcal H_{\pi^*}$ is the intersection graph of $G_1,G_2$ though $\pi^*$.
	We call a vertex in $[n]$ heavy (resp. light), if it has balanced load in $\mathcal H_{\pi^*}$ no less than $\alpha^{-1}+\varepsilon$ (resp. no more than $\alpha^{-1}-\varepsilon$). The core of the argument establishes that the $\pi^*$ values of heavy vertices are recoverable, whereas light vertices are not. Our arguments are in large part inspired by \cite{DD23b}, but we believe we have also developed methods and arguments that are conceptually new and of independent interest. In what follows, we discuss in more detail about our approaches, highlighting the connections with \cite{DD23b} as well as the new ingredients we add. 
	
    For the positive result, we show one can recover the $\pi^*$ values for almost all heavy vertices with high probability. While writing the paper, we noticed a similarity to \cite{CKMP20}, where the $k$-core is generalized here to an ``$\alpha^{-1}$-core''. However, our arguments are more delicate, drawing largely from a truncated first moment calculation in \cite{DD23b}. Specifically, for $U\in [n]$ and $\pi\in \operatorname{S}_n$, if the induced subgraph $\mathcal H_\pi(U)$ has a large edge-to-ratio density (exceeding $\alpha^{-1}$), we expect that $\pi\mid_U$ should generally align with $\pi^*\mid_U$. However, a direct first moment method fails to prove this heuristic, as the predominant contribution to the first moment comes from the rare event that $\mathcal H_\pi(U)$ has an unusually dense subgraph. \cite{DD23b} overcame this issue by applying a truncated first moment method with the additional constraint that $\mathcal H_\pi(U)$ is \emph{well-balanced}. In this context, $\mathcal H_\pi(U)$ being well-balanced means its maximal subgraph density is close to its overall density. Using informal notations, it can be shown as in \cite{DD23b} that for any $\rho>\alpha^{-1}$ and sufficiently small $\varepsilon>0$ (we use $\pi\mid_U\approx \pi^*\mid_U$ to indicate $\pi\mid_U,\pi^*\mid_U$ agree except on $o(n)$ vertices), 
    \begin{align*}
    &\quad\ \  \mathbb{P}[\exists U,\pi\text{ s.t. } |\mathcal E_\pi(U)|\ge (\rho-\varepsilon)|U|; |\mathcal E_\pi(W)|\le (\rho+\varepsilon)|W|,\forall W\subset U;\pi\mid_U\not\approx\pi^*\mid_U]
    \\\le&\ \sum_U\sum_{\sigma=\pi\mid_U}\mathbf{1}\{\sigma\not\approx\pi^*\mid_U\}\mathbb{P}[|\mathcal E_\sigma(U)|\ge(\rho-\varepsilon)|U|;|\mathcal E_\sigma(W)|\le (\rho+\varepsilon)|W|,\forall W\subset U]=o(1)\,.
    \end{align*}
    In the proof therein, it is necessary to leverage the constraint $|\mathcal E(W)|\le (\rho+\varepsilon)|W|,\forall W\subset U$ to bound the probability terms effectively. 
    We note an equivalent interpretation to this crucial constraint is that all vertices in $\mathcal H_\pi(U)$ have roughly the same balanced loads. Inspired by this, we consider to recover all the heavy vertices iteratively, starting from those with highest loads and progressing to those with loads close to $\alpha^{-1}+\varepsilon$. At each step, we focus on recovering vertices of roughly the same loads, thereby creating an analogue of the crucial constraint. This enables a similar first moment method to work effectively and ensures very few mismatches are made in each step with high probability. By this means, we conclude that the iterative algorithm recovers all but very few heavy vertices with high probability.
	
 Several conceptual challenges remain before we can make the above idea work.  The first issue arises from the iterative nature of our algorithm: each step is affected by previous iterations, requiring the introduction of certain conditioning. This conditioning substantially complicates the distribution and prevents an explicit calculation of the first moment. Fortunately, we can largely bypass this issue using a monotone property of balanced load (Lemma~\ref{lem-load-monotonicity}) and the FKG inequality. With these tools, we can drop the most annoying conditioning terms in strategic places by relaxing towards a favorable direction. Notably, a similar approach was used in \cite{DDG24}.
Another issue is that knowing the algorithm produces few mismatches does not directly imply the algorithm ends at an optimal position; for instance, it might result in only $o(n)$ matched pairs. To address this, we use a variational characterization of the balanced load (Lemma~\ref{lem-load-variational-characterization}) together with an associated stability result (Lemma~\ref{lem-stability}). We will argue that the set of vertices matched by the algorithm is a near maximizer of $f_{\alpha}$ (defined as in Lemma~\ref{lem-load-variational-characterization}), and thus stability guarantees the set to contain most of the heavy vertices. With these two main challenges addressed, we prove the positive result using several additional technical refinements from \cite{DD23b}.  
	
For the negative side, we argue it is almost impossible to recover the $\pi^*$ value of any non-vanishing fraction of light vertices. To this end, we reduce to showing that even knowing the set and $\pi^*$ values of all the non-light vertices, recovering the $\pi^*$ values for a positive fraction of light vertices remains unlikely. This reduction place us in a position similar to the negative result in \cite{DD23b}, allowing us to adopt analogous arguments from that work. \cite{DD23b} proves the impossibility result via reducing it to a property of the posterior distribution of $\pi^*$ given $(G_1,G_2)$. Specifically, since the distribution of $\pi^*$ given $(G_1,G_2)$ is captured by the posterior $\mu_n=\mu_n(G_1,G_2)$, the impossibility of partial recovery follows upon showing that for any $\varepsilon>0$, with high probability over $(G_1,G_2)$ sampled from the correlated model, 
    \begin{equation}\label{eq-pos-1}
    \max_{\widehat{\pi}}\mu_n[\pi:\operatorname{ov}(\widehat{\pi},\pi)\ge \varepsilon n]=o(1)\,.
    \end{equation}
  
    We follow a similar strategy in this paper for our reduced model. However, an important technical modification is required. In \cite{DD23b}, one could shift to show that \eqref{eq-pos-1} happens with high probability over $(G_1,G_2)$ sampled from a simpler correlation-free null model, thanks to an indistinguishable property established in \cite{DD23a}. Here, we lack such a property. Nevertheless, we can prove an $\exp(O(n))$-contiguity between our modified correlated and null models, implying that in order to show a certain event is typical under the correlated model, it suffices to prove it happens with probability $1-\exp(-\omega(n))$ under the null model. Applying this strategy to the above claim, it suffices to prove that an analogue of \eqref{eq-pos-1} happens with $1-\exp(-\omega(n))$ in the null model. At a high level, this seems promising because in the absence of correlation, correctly guessing $\varepsilon n$ coordinates of $\pi^*$ is $\exp(-\Theta(n\log n))$ unlikely. A careful refinement of the arguments in \cite{DD23b} makes this heuristic rigorous, thereby completing the proof of the negative result.

	The paper is organized as follows: Section~\ref{sec-balanced-load} presents preliminaries on key concepts and tools for the proof, including balanced load, the limiting load measure, and the cycle decomposition scheme. Section~\ref{sec-positive} is dedicated to proving Theorem~\ref{thm-main-ER}-(i), while Section~\ref{sec-negative} outlines the conceptual framework for the proof of Theorem~\ref{thm-main-ER}-(ii). Proofs of several technical lemmas and propositions, which are primarily variants or refinements of existing arguments, are deferred to the appendix.
	
	\subsection{Notations}\label{subsec-notation}
	
We conclude this section by introducing some notation conventions that will be used throughout the paper. We use normal font, such as $ U $, to denote a set that may be random or variable, while we use the \texttt{mathtt} font, such as $ \mathtt{U} $, to denote a deterministic, fixed set. Typically, $ \mathtt{U} $ represents a specific realization of the random set $ U $. We use both $ |A| $ and $ \#A $ to represent the cardinality of $A$. 

Below, we provide a list of specific notation that will be referenced frequently in the paper:

	\begin{itemize}
		\item \emph{$\operatorname{E}=\operatorname{E}(n)$ and $\operatorname{E}(U)$ for $U\subset [n]$}. $\operatorname{E}$ denotes for the set of unordered distinct pairs in $[n]$, and for $U\subset [n]$, $\operatorname{E}(U)\equiv\{(i,j)=(j,i)\in \operatorname{E}:i,j\in U\}$. 
            \item \emph{The correlated graphs $G_1,G_2$}. Let $\pi^*$ be a uniform permutation in $\operatorname{S}_n$. Sample independent Bernoulli variables $I_{i,j}\sim \mathbf{B}(1,p), J_{i,j}^1\sim \mathbf B(1,s),J_{i,j}^2\sim \mathbf B(1,s)$ for all $(i,j)\in \operatorname{E}$. We define two graphs $G_1,G_2$ on $[n]$, such that
	\begin{equation*}\label{eq-IJJ}
	(i,j)\in E(G_1)\iff I_{i,j}J_{i,j}^1=1\,,\ \ (i,j)\in E(G_2)\iff I_{(\pi^*)^{-1}(i),(\pi^*)^{-1}(j)}J_{i,j}^2=1\,.
	\end{equation*}
		\item \emph{The measures $\mathcal P^*,\mathcal P$ and $\mathcal P_{\pi^*}$}. We write $\mathcal P^*=\mathcal P^*(n,p,s)$, where $p=p(n),s=s(n)$ satisfies the assumption in Theorem~\ref{thm-main-ER}, and write $\mathcal P$ for its marginal on $(G_1,G_2)$. For $\pi^*\in \operatorname{S}_n$, we abbreviate $\mathcal P_{\pi^*}\equiv \mathcal P^*[\cdot\mid \pi^*]$. We should also think $\mathcal P_{\pi^*}$ as a product measure on the indicators $I_{i,j},J^1_{i,j},J^2_{i,j},(i,j)\in \operatorname{E}$ as defined above.
		\item  \emph{$\pi$-intersection graphs for $\pi\in \operatorname{S}_n$}. For two graphs $G_1,G_2$ on $[n]$ and any matching $\pi\in \operatorname{S}_n$, we define the $\pi$-intersection graph $\mathcal H_\pi=\mathcal H_{\pi}(G_1,G_2)=([n],\mathcal E_\pi)$ as the intersection of $G_1,G_2$ though $\pi$. Precisely, we have for any $(i,j)\in \operatorname{E}$, $(i,j)\in\mathcal E_\pi$ if and only if $(i,j)\in E(G_1)$ and $(\pi^{-1}(i),\pi^{-1}(j))\in E(G_2)$. 
		\item \emph{$\mathcal H_\pi(U)$ as subgraphs of $\mathcal H_\pi$}. For $U\subset [n]$, we write $\mathcal H_\pi(U)$ for the induced subgraph of $\mathcal H_\pi$ on $U$.% with edge set $\mathcal E_\pi\cap \operatorname{E}(U;V)$. In particular, $\mathcal H_\pi(U; \emptyset)$ is the induced subgraph of $\mathcal H_\pi$ on $U$ which we abbreviate as $\mathcal H_\pi(U)$,. 
		\item \emph{partial matching}. For a subset $V$ of $[n]$, a partial matching $\pi$ on $V$ is an injection $\pi:V\to [n]$. Furthermore, the set of partial matchings on $V$ with image $V'$ (i.e., the set of bijections from $V$ to $V'$) is denoted as $\operatorname{B}(V,V')$. 
		\item \emph{$\pi$-intersection graph for partial matching $\pi$}. For a partial matching $\pi$ on $V$,
		we can also define $\mathcal H_\pi(U)=(U,\mathcal E_\pi(U))$ for any $U\subset V$. To do this, we pick any extension of $\pi$ which is a real matching $\pi'\in \operatorname{S}_n$, and define $\mathcal H_\pi(U)$ as the induced subgraph on $U$ of $\mathcal H_{\pi'}$ (note this is well-defined for $U\subset V$). 
		\item \emph{The balanced load function}. For a graph $G=(V,E)$, we denote $\partial\theta=\partial\theta(G):V\to \mathbb{R}_{\ge 0}$ as the balanced load function of $G$ (defined in Section~\ref{subsec-balanced-load} below). For two graphs $G_1,G_2$, and any matching $\pi\in \operatorname{S}_n$, we denote $\partial\theta_\pi$ in short for the function $\partial\theta(\mathcal H_\pi)$. When $\pi$ is a partial matching on $U$, we also denote $\partial\theta_\pi$ for the function $\partial\theta(\mathcal H_\pi(U))$ on $U$.  
		\item \emph{Weighted edge-vertex function}. For a graph $G=(V,E)$ and any $t\ge 0$, we write $f_t=f_t(G)$ for the function defined on subsets of $V$ given by $f_t(U)\equiv t|E(U)|-|U|$ ($E(U)$ is the edge set of the induced subgraph of $G$ on $U$). For two graphs $G_1,G_2$ on $[n]$ and a matching $\pi\in \operatorname{S}_n$, we denote $f_t^\pi$ for the function $f_t(\mathcal H_\pi)$. When $\pi$ is a partial matching on $U\subset [n]$, we also write $f_t^\pi$ for the corresponding function $f_t(\mathcal H_\pi)$, restricting on subsets of $U$.
	\end{itemize}
	
	\section{Preliminaries}\label{sec-balanced-load}
	
	In this section, we introduce several important concepts and ingredients that will become useful in later proof.
	
	\subsection{Balanced load}\label{subsec-balanced-load}
	
The concept of balanced load was first introduced in \cite{Haj90, Haj96}, emerging naturally from the study of convex optimization problems related to graphs. For a simple graph $ G $, let $ \vec{E}(G) $ denote its directed edge set $\{(x\to y),(y\to x):(x,y)\in E(G)\}$. We use $ x \sim y $ to indicate that vertices $ x $ and $ y $ are adjacent. A load allocation of $ G $ is a function $ \theta : \vec{E}(G) \to [0, 1] $ such that $ \theta(x\to y) + \theta(y\to x) = 1 $ for every directed edge $ (x\to y) \in \vec{E}(G) $. The load of a vertex $ x $ is given by $ \partial \theta(x) \equiv \sum_{y \sim x} \theta(y\to x) $. A load allocation $ \theta $ is called balanced if it satisfies the following condition:
\begin{equation}
\label{eq-balanced-load}
\forall x \sim y, \quad \partial \theta(x) < \partial \theta(y) \implies \theta(x\to y) = 0.
\end{equation}
Such a balanced condition can be viewed as a local optimality constraint: it is impossible to reduce the imbalance by making local adjustments. However, it was shown in \cite{Haj90} that condition \eqref{eq-balanced-load} actually guarantees strong global optimality. Specifically, the following statements are equivalent:
\begin{itemize}
    \item $ \theta $ is a balanced load allocation;
    \item $ \theta $ minimizes $ \sum_{x \in V} f(\partial \theta(x)) $ for some strictly convex function $ f : \mathbb{R}_{\ge 0} \to \mathbb{R} $;
    \item $ \theta $ minimizes $ \sum_{x \in V} f(\partial \theta(x)) $ for every convex function $ f : \mathbb{R}_{\ge 0} \to \mathbb{R} $.
\end{itemize}
As a result, balanced load allocations exist for every graph and induce a unique load function $ \partial \theta $. We refer to this common function as the balanced load function. Since we will only work with balanced load allocations, we denote the balanced load function simply as $ \partial \theta $ (with a slight abuse of notation) and refer to it as the load function.

There is a useful variational characterization of the load function in terms of optimization problems over induced subgraphs. The most well-known case is that the set of vertices with the largest loads solves the densest subgraph problem, where the maximum load corresponds to the maximum subgraph density. More generally, we have the following lemma.
	
	\begin{lemma}\label{lem-load-variational-characterization}
		For a graph $G=(V,E)$ and any $t>0$, the set $U=\{x\in V:\partial\theta(i)\ge t^{-1}\}$ is the maximizer of $f_t(W)\equiv t |E(W)|-|W|$ over $W\subseteq V$, and if there are multiple maximizers, $U$ is the one that further maximizes $|W|$. Furthermore, for any $W\subseteq V$, it holds that $f(U\cup W) \ge f(W)$.
	\end{lemma}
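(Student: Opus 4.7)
\medskip

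\noindent\textbf{Proof plan for Lemma~\ref{lem-load-variational-characterization}.}
The plan is to use the balanced condition \eqref{eq-balanced-load} directly (rather than the convex-minimization characterization) and reduce everything to a single master inequality. For any $W\subseteq V$, by splitting the double sum according to whether the other endpoint lies in $W$,
\[
\sum_{x\in W}\partial\theta(x)\;=\;\sum_{x\in W}\sum_{y\sim x}\theta(y\to x)\;=\;|E(W)|\;+\;\sum_{\substack{x\in W,\,y\notin W\\ y\sim x}}\theta(y\to x),
\]
since each edge inside $W$ contributes $\theta(y\to x)+\theta(x\to y)=1$. The second sum is non-negative, giving $|E(W)|\le \sum_{x\in W}\partial\theta(x)$ and hence the master inequality
\[
f_t(W)\;=\;t|E(W)|-|W|\;\le\;\sum_{x\in W}\bigl(t\,\partial\theta(x)-1\bigr).
\]
The right-hand side, viewed as a function of $W$, is maximized precisely by including every $x$ with $t\,\partial\theta(x)-1\ge0$, i.e.\ by $W=U$.

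To see that $U$ attains the master inequality with equality, I would apply the balanced condition in the form ``$\partial\theta(y)<\partial\theta(x)\Rightarrow\theta(y\to x)=0$'' (the equivalent restatement of \eqref{eq-balanced-load} after swapping names). For $x\in U$ and $y\notin U$ with $y\sim x$ one has $\partial\theta(y)<t^{-1}\le\partial\theta(x)$, so $\theta(y\to x)=0$. Hence the boundary sum vanishes at $W=U$, yielding $f_t(U)=\sum_{x\in U}(t\,\partial\theta(x)-1)\ge\sum_{x\in W}(t\,\partial\theta(x)-1)\ge f_t(W)$ for every $W\subseteq V$, so $U$ is a maximizer. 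For the second assertion, any other maximizer $W^*$ must make both the master inequality and the comparison with $U$ tight; tightness of the comparison forces $\{x:\partial\theta(x)>t^{-1}\}\subseteq W^*\subseteq U$, so $U$ is the unique largest maximizer.

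For the final claim $f_t(U\cup W)\ge f_t(W)$, compute
\[
f_t(U\cup W)-f_t(W)\;=\;t\bigl|E(U\cup W)\setminus E(W)\bigr|\;-\;|U\setminus W|.
\]
Every edge counted on the right has at least one endpoint in $U\setminus W$ and both endpoints in $U\cup W$, so it lies in $E(U\setminus W)$, $E(U\setminus W,U\cap W)$, or $E(U\setminus W,W\setminus U)$. Applying the same balanced-condition argument as above to each $x\in U\setminus W$ (edges to $y\notin U$ contribute $0$ to $\partial\theta(x)$) gives
\[
\tfrac{1}{t}|U\setminus W|\;\le\;\sum_{x\in U\setminus W}\partial\theta(x)\;\le\;|E(U\setminus W)|+|E(U\setminus W,U\cap W)|\;\le\;\bigl|E(U\cup W)\setminus E(W)\bigr|,
\]
which rearranges to the desired inequality. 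The only delicate point is to keep track of the boundary term $\sum_{x\in W,\,y\notin W}\theta(y\to x)$ consistently throughout — discarding it for the upper bound, killing it via \eqref{eq-balanced-load} when $W=U$ or when we restrict attention to $U\setminus W$ — but no additional ideas beyond the balanced condition are needed.
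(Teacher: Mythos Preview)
Your proof is correct and follows essentially the same approach as the paper: both arguments use the balanced condition \eqref{eq-balanced-load} directly to kill the boundary contributions $\theta(y\to x)$ for $x\in U$, $y\notin U$, and both deduce the final claim $f_t(U\cup W)\ge f_t(W)$ from the inequality $\tfrac{1}{t}|U\setminus W|\le |E(U\setminus W)|+|E(U\setminus W,U\cap W)|$. The only cosmetic difference is packaging: the paper chains $f_t(U)\ge f_t(U\cap W)\ge f_t(W)$ via the decomposition $X=U\setminus W$, $Y=W\setminus U$, $Z=U\cap W$, whereas you route through the single master inequality $f_t(W)\le\sum_{x\in W}(t\,\partial\theta(x)-1)$, which is equivalent and arguably cleaner.
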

	
	\begin{proof} 
		Let $W$ be a subset of $V$, and let $X=U\setminus W$, $Y=W\setminus U$, and $Z=U\cap W$. Since $X\subseteq U$, we have
		\begin{align*}
			\frac{|X|}{t}
			&\le \sum_{x\in X} \partial\theta(x)
			= \sum_{x\in X}\sum_{y\in V} \theta(y,x)
			\stackrel{\star}{=} 
			\sum_{x\in X}\bigg[\sum_{y\in X} +
			\sum_{y\in Z}\bigg] \theta(y,x)\\
			&= |E(X)|
			+ \sum_{x\in X} \sum_{y\in Z}\theta(y,x)
			\le |E(X)| + |E(X,Z)|\,,
		\end{align*}
		In the above, the equality marked ``$\star$'' uses that if $x\in U$ and $y\notin U$, then $\partial \theta(x)\ge \alpha^{-1} > \partial\theta(y)$, so the balanced condition \eqref{eq-balanced-load} implies $\theta(y,x)=0$. Rearranging the above gives $f_t(U) \ge f_t(Z)$.
		Similarly, since $U\cap Y=\emptyset$, we have
		\begin{align*}
			\frac{|Y|}{t}
			&\ge \sum_{y\in Y}\partial\theta(y)
			= \sum_{y\in Y}\sum_{x\in V}\theta(x,y)
			\ge \sum_{y\in Y}
			\bigg[\sum_{x\in Y}+\sum_{x\in Z}\bigg]
			\theta(x,y) \\
			&=|E(Y)|
			+ \sum_{y\in Y}\sum_{x\in Z} \theta(x,y)
			\stackrel{\star}{=} |E(Y)| + |E(Y,Z)|\,,
		\end{align*}
		where the first inequality is strict whenever $Y\ne\emptyset$. In the above, the equality marked ``$\star$'' again uses the balanced condition which tells us that $\theta(x,y)=1$ for $x\in U$ and $y\notin U$. Rearranging the above gives $f_t(Z)\ge f_t(W)$, with strict inequality unless $Z=W$.
		Lastly, we note that since $U\cup W$ is the disjoint union of $W$ with $X$, we have
		\[f_t(U\cup W)-f_t(W)
		= t\Big( |E(X)|+|E(X,W)|\Big) - |X|
		\ge 0\]
		by the first inequality proved above, since
		$E(X,Z)\subseteq E(X,W)$. This proves the lemma.
	\end{proof}
	
	As a quick application, we show the load function $\partial\theta$ is coordinatewise non-decreasing with respect to the graph. Such a monotone property will allow us to apply the FKG inequality in the analysis of the iterative matching recovery algorithm (see Section~\ref{sec-positive} below). 
	
	\begin{lemma}\label{lem-load-monotonicity}
		For any $G_1=(V,E_1)$ and $G_2=(V,E_2)$ with $E_1\subset E_2$, we have $\partial\Theta_1(x)\le \partial\Theta_2(x)$ for any $x\in V$, where $\partial\Theta_i(x)$ is the load of $x$ in $G_i$.
	\end{lemma}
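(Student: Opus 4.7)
The plan is to deduce the monotonicity of $\partial\theta$ from the variational characterization in Lemma~\ref{lem-load-variational-characterization}. Observe that the conclusion $\partial\Theta_1(x)\le \partial\Theta_2(x)$ for all $x$ is equivalent to showing that for every $t>0$, the super-level set $U^t_1\equiv\{x:\partial\Theta_1(x)\ge t^{-1}\}$ is contained in $U^t_2\equiv\{x:\partial\Theta_2(x)\ge t^{-1}\}$ (if not, one can find $t$ with $\partial\Theta_1(x)\ge t^{-1}>\partial\Theta_2(x)$ for some $x$, yielding a vertex in $U^t_1\setminus U^t_2$). So I would fix $t>0$ and aim to prove this inclusion.

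For this, let $f_t^i(W)\equiv t|E_i(W)|-|W|$ denote the weighted edge-vertex functional on $G_i$. By Lemma~\ref{lem-load-variational-characterization}, $U^t_i$ is the largest set (by cardinality) maximizing $f_t^i$. Suppose for contradiction that $U^t_1\not\subseteq U^t_2$, and set $A=U^t_1\setminus U^t_2$ and $B=U^t_1\cap U^t_2$. Since $U^t_2\cup U^t_1\supsetneq U^t_2$, the tie-breaking maximality of $U^t_2$ in $G_2$ forces the strict inequality $f_t^2(U^t_2\cup U^t_1)<f_t^2(U^t_2)$, which rearranges to
\[
t\bigl(|E_2(A)|+|E_2(A,U^t_2)|\bigr)<|A|\,.
\]
On the other hand, since $U^t_1$ is a maximizer for $f_t^1$ and $B\subsetneq U^t_1$, we have $f_t^1(U^t_1)\ge f_t^1(B)$, which rearranges to
\[
t\bigl(|E_1(A)|+|E_1(A,B)|\bigr)\ge |A|\,.
\]
Since $E_1\subseteq E_2$ and $B\subseteq U^t_2$, the left-hand side of the latter is bounded above by the left-hand side of the former, producing a direct contradiction. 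This forces $U^t_1\subseteq U^t_2$.

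I do not expect any serious obstacle: the argument is essentially a direct exploitation of the fact that adding edges can only increase the optimal value and enlarge the largest maximizer of $f_t$. The only subtlety worth flagging is the use of the tie-breaking part of Lemma~\ref{lem-load-variational-characterization} — it is crucial that $U^t_2$ is the maximizer \emph{of largest cardinality}, in order to promote the inequality $f_t^2(U^t_2\cup U^t_1)\le f_t^2(U^t_2)$ to a strict one when $U^t_1\not\subseteq U^t_2$. Without this strictness, one would obtain compatible (rather than contradictory) inequalities, and the argument would fail at the boundary.
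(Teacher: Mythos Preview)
Your proposal is correct and follows essentially the same approach as the paper: both reduce to showing the inclusion of super-level sets $U^t_1\subset U^t_2$, compare $f_t^1(U^t_1)\ge f_t^1(U^t_1\cap U^t_2)$ against the strict inequality $f_t^2(U^t_1\cup U^t_2)<f_t^2(U^t_2)$ coming from the tie-breaking part of Lemma~\ref{lem-load-variational-characterization}, and obtain a contradiction via $E_1\subset E_2$. Your version is slightly more explicit in naming $A=U^t_1\setminus U^t_2$ and $B=U^t_1\cap U^t_2$, but the argument is the same.
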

	\begin{proof}
		It suffices to show that for any $t>0$, $\partial\Theta_1(v)\ge t^{-1}$ implies $\partial\Theta_2(v)\ge t^{-1}$ for any $v\in V$. Assume the contrary, let $V_i$ be the set of vertices with loads no less than $t^{-1}$ in $G_i$, $i=1,2$, then $V_1\cup V_2\neq V_1$. By Lemma~\ref{lem-load-variational-characterization} we see the following two inequalities hold:
		\[
		t|E_1(V_1)|-|V_1|\ge t|E_1(V_1\cap V_2)|-|V_1\cap V_2|\iff t|E_1(V_1\setminus V_1\cap V_2)|-|V_1\setminus V_2|\ge 0\,,
		\]
		\[
		t|E_2(V_1\cup V_2)|-|V_1\cup V_2|<t|E_2(V_2)|-|V_2|\iff t|E_2(V_1\cup V_2\setminus V_2|-|V_1\setminus V_2|<0\,.
		\]
		However, since $E_1\subset E_2$, we have $E_2(V_1\cup V_2\setminus V_2)\supset E_1(V_1\setminus V_1\cap V_2)$. This leads to a contradiction and completes the proof.
	\end{proof}
	
The following lemma can be seen as a stability result for the variational representation given in Lemma~\ref{lem-load-variational-characterization}, which plays a key role in analyzing the algorithm in Section~\ref{sec-positive}. For any $ t > 0 $, it is intuitive to expect that if $ W $ is a near-maximizer of $ f_t $, then $ W $ should be close to $ U_t = \{x \in V : \partial \theta(x) \ge t^{-1}\} $, the true maximizer. However, we need to be more cautious since it is plausible that many vertices have balanced loads close to $ t^{-1} $. The next lemma provides a similar result by relaxing $ U_t $ to $ U_r = \{x \in V : \partial \theta(x) \ge r^{-1}\} $ for some $ r $ slightly smaller than $ t $.

	\begin{lemma}\label{lem-stability}
		Let $G=G(V,E)$ be an arbitrary graph on $n$ vertices. For any $0<r<t$, let $U_t=\{x\in V:\partial\theta(x)\ge t^{-1}\}$ and $U_{r}=\{x\in V:\partial\theta(x)\ge r^{-1}\}$. Then for any $\varepsilon>0$, there exists $\delta=\delta(t,r,\varepsilon)>0$ that does not depend on $G$, such that the following holds for any $W\subset V$:
		\begin{equation}\label{eq-stability}
			f_t(W\cup U_t)\le f_t(W)+\delta n\quad\Rightarrow\quad |U_{r}\setminus W|\le \varepsilon n\,.
		\end{equation}
	\end{lemma}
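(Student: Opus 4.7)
The plan is to show that any $W$ meeting the hypothesis must contain almost all of $U_r$, by lower-bounding the gap $f_t(W \cup U_t) - f_t(W)$ in terms of $|U_r \setminus W|$. Set $X \equiv U_r \setminus W$; since $r < t$ we have $r^{-1} > t^{-1}$, so $U_r \subseteq U_t$ and in particular $X \subseteq U_t$.

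The first step uses the balanced condition on vertices of $X$, in close analogy with the proof of Lemma~\ref{lem-load-variational-characterization}. Every $x \in X$ satisfies $\partial\theta(x) \ge r^{-1}$, and condition \eqref{eq-balanced-load} forces $\theta(y \to x) > 0$ only when $\partial\theta(y) \ge \partial\theta(x) \ge r^{-1}$, i.e.\ when $y \in U_r$. Summing $\partial\theta(x)$ over $x \in X$ and splitting the neighborhood via $U_r = X \sqcup (U_r \cap W)$ yields
\[
\frac{|X|}{r} \;\le\; \sum_{x \in X} \partial\theta(x) \;\le\; |E(X)| + |E(X, U_r \cap W)| \;\le\; |E(X)| + |E(X, W)|.
\]

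Next, I expand $f_t(W \cup X) - f_t(W) = t\bigl(|E(X)| + |E(X, W)|\bigr) - |X|$ directly from the definition. Combining with the inequality above gives the key gain estimate
\[
f_t(W \cup X) - f_t(W) \;\ge\; \Big(\frac{t}{r} - 1\Big)\,|X| \;=\; \frac{t-r}{r}\,|X|.
\]
Since $X \subseteq U_t$, we have $W \cup U_t = (W \cup X) \cup U_t$, so Lemma~\ref{lem-load-variational-characterization} applied to the base set $W \cup X$ yields $f_t(W \cup U_t) \ge f_t(W \cup X)$. Chaining with the hypothesis $f_t(W \cup U_t) \le f_t(W) + \delta n$ gives $(t-r)|X|/r \le \delta n$, so $|X| \le \delta r n /(t-r)$. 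Setting $\delta \equiv \varepsilon(t-r)/r$ completes the proof, and the resulting $\delta$ depends only on $t,r,\varepsilon$ as required.

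I do not anticipate a serious obstacle: the argument is a local refinement of Lemma~\ref{lem-load-variational-characterization}. The only subtle point is to route the comparison through the auxiliary set $W \cup X$ rather than try to bound $f_t(W \cup U_t) - f_t(W)$ directly; this is what ensures that the lower bound scales with $|X| = |U_r \setminus W|$, and it also explains why the relaxation from $U_t$ to $U_r$ (so that the load lower bound $r^{-1}$ strictly exceeds $t^{-1}$) is exactly what is needed to obtain the strictly positive multiplier $(t-r)/r$.
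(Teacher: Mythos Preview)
Your proof is correct and essentially the same as the paper's. Note that your auxiliary set satisfies $W\cup X = W\cup U_r$, so you route through exactly the same intermediate set as the paper; the only cosmetic differences are that you inline the balanced-load inequality (the paper instead cites Lemma~\ref{lem-load-variational-characterization} at parameter $r$ to obtain the equivalent bound $|E(W\cup U_r)|-|E(W)|\ge r^{-1}|U_r\setminus W|$), and you argue directly rather than by contrapositive, which yields the slightly sharper constant $\delta=\varepsilon(t-r)/r$ in place of the paper's $\tfrac{1}{2}\varepsilon(t-r)/r$.
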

	\begin{proof}
		We will prove the contrapositive direction. Assuming that $W$ satisfies $|U_{r}\setminus W|>\varepsilon n$, we show that there exists $\delta$ depending only on $r,t$ and $\varepsilon$ such that $f_t(W\cup U_t)>f_t(W)+\delta n$.
		Applying Lemma~\ref{lem-load-variational-characterization} with $t$ replaced by $r$, we have
		\[
		f_{r}(W\cup U_r)\ge f_{r}(W)\,,
		\]
		which implies that
		\[
		|E(W\cup U_r)|-|E(W)|\ge r^{-1}(|W\cup U_r|-|W|)\ge r^{-1}|U_{r}\setminus W|\ge r^{-1}\varepsilon n\,.
		\]
		Since $r<t$, we have
		\[
		f_t(W\cup U_r)-f_t(W)=(f_{r}(W\cup U_r)-f_{r}(W))+(t-r)(|E(W\cup U_r)|-|W|)> \delta n
		\]
		for $\delta=\frac12(t-r)r^{-1}\varepsilon>0$. Given this, we have 
		\[
		f_t(W\cup U_t)-f_t(W)=f_t(W\cup U)-f_t(W\cup U_r)+f_t(W\cup U_r)-f_t(W)>0+\delta n=\delta n\,,
		\]
		where the $0$ term comes from the fact that (since $U_r\subset U$)
		\[
		f_t(W\cup U_t)-f_t(W\cup U_s)=f_t(W\cup U_s\cup U_t)-f_t(W\cup U_s)\ge 0\,,
		\]
		using Lemma~\ref{lem-load-variational-characterization} again. This completes the proof.
	\end{proof}

	We end our discussion of the load function for general graphs with a graph-theoretic description of vertices that have load less than $ 1 $ or greater than $ 1 $. This characterization will be useful in analyzing the case when $ \alpha = 1 $. We say a connected graph is \emph{non-simple} if its number of edges is strictly greater than the number of vertices.

	\begin{lemma}\label{lem-load-1-and->1}
		For any graph $G$, the set of vertices with loads less than $1$ is the union of the tree components of $G$. In addition, the set of vertices with loads greater than $1$ is the union of maximal $2$-cores inside non-simple components of $G$.  
	\end{lemma}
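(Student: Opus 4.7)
My plan is to apply the variational characterization of Lemma~\ref{lem-load-variational-characterization} at two thresholds: $t=1$ for the first statement and $t=r$ slightly below $1$ for the second. The key property is that $f_t(W)=t|E(W)|-|W|$ decomposes additively over the connected components of $G$, so in each case I can analyze the maximizer component by component.

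For the first statement I take $t=1$; then $U_1=\{v:\partial\theta(v)\ge 1\}$ is the largest maximizer of $f_1$ by Lemma~\ref{lem-load-variational-characterization}. If $T$ is a tree component, every nonempty $W\subseteq T$ induces a forest, so $|E(W)|\le|W|-1$ and $f_1(W)\le -1<0=f_1(\emptyset)$; hence $U_1\cap T=\emptyset$. If $C$ is a non-tree component (so $|E(C)|\ge|C|$), a BFS argument shows that every nonempty $W\subseteq C$ can be grown to $C$ by adding one vertex at a time with at least one new edge per step (by connectedness), giving $f_1(C)\ge f_1(W)$; combined with $f_1(C)\ge 0=f_1(\emptyset)$ and the largest-maximizer property, $U_1\cap C=C$. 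Thus $U_1$ equals the union of non-tree components, proving the first statement.

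For the forward direction of the second statement, I show by induction on the $2$-core peeling order $v_1,\dots,v_m$ of a non-tree component $C$ (with max $2$-core $K$) that every $v_i\in C\setminus K$ satisfies $\partial\theta(v_i)=1$. At peel step $i$, $v_i$ has at most one remaining neighbor $u$ in $C\setminus\{v_1,\dots,v_{i-1}\}$; supposing $\partial\theta(v_i)>1$, the balanced condition~\eqref{eq-balanced-load} together with the inductive hypothesis $\partial\theta(v_j)=1$ for each earlier-peeled neighbor $v_j$ forces $\theta(v_j\to v_i)=0$, whence $\partial\theta(v_i)\le \theta(u\to v_i)\le 1$, a contradiction. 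Combined with $\partial\theta(v_i)\ge 1$ from the first part, this gives $\partial\theta(v_i)=1$. If $C$ is unicyclic ($|E(C)|=|C|$), summing loads over $C$ yields $\sum \partial\theta=|C|$, which together with every load being $\ge 1$ forces every load in $C$ to equal $1$. Hence $\partial\theta(v)>1$ implies that $v$ lies in the $2$-core of a non-simple component.

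The reverse inclusion is the main obstacle, and I handle it by a second application of Lemma~\ref{lem-load-variational-characterization} at $t=r<1$, with $r$ chosen so that $1/r$ lies strictly below the smallest load exceeding $1$ (such $r$ exists because the load function takes only finitely many values); then $U_r=\{v:\partial\theta(v)>1\}$. Decomposing by components, tree and unicyclic components contribute $\emptyset$ to the largest maximizer of $f_r$ because $f_r(W)<0$ for every nonempty subset there. For a non-simple component $C$ with $2$-core $K$, the peeling analysis also shows that $C\setminus K$ forms a forest attached to $K$ in which every vertex has exactly one ``parent edge'' in $C$; consequently, for any $W\subseteq C$ with $B=W\setminus K$, the edges of $W$ incident to $B$ number at most $|B|$, yielding $f_r(W)-f_r(W\cap K)\le (r-1)|B|\le 0$. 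Hence $f_r$ is maximized on subsets of $K$. Finally, because $K$ is connected with minimum degree $\ge 2$ and $|E(K)|>|K|$, a short counting argument using the min-degree bound together with connectedness gives $|E(K)|-|E(W)|\ge|K\setminus W|+1$ for every nonempty proper $W\subsetneq K$, so $f_r(K)>f_r(W)$ once $r$ is sufficiently close to $1$. This identifies the largest maximizer of $f_r$ as $\bigcup_{C\text{ non-simple}} K_C$, completing the proof.
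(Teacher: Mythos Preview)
Your proof is correct. For the first statement you take the same route as the paper (apply Lemma~\ref{lem-load-variational-characterization} at $t=1$ and identify the largest maximizer component-by-component). For the second statement your argument differs from the paper's.

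The paper works entirely on the side of the maximizer $U=\{v:\partial\theta(v)>1\}$: it observes that $U$ must have minimum degree~$\ge 2$ (else deleting a low-degree vertex would raise $f_t$ for $t<1$), so $U$ is \emph{a} $2$-core; it then argues $U$ is the \emph{maximal} $2$-core by contradiction, taking $x\in W\setminus U$ for a competing $2$-core $W$, producing two paths from $x$ back to $U$, and checking that adjoining those paths to $U$ would strictly increase $f_t$ for $t$ sufficiently close to~$1$. Your approach instead proves the two inclusions separately: the forward inclusion via an inductive peeling argument (showing every vertex removed in the $2$-core peel has load exactly~$1$, and that unicyclic components are all-ones by the total-load identity), and the reverse inclusion by directly verifying that the $2$-core $K$ of each non-simple component is the \emph{unique} maximizer of $f_r$, using the forest structure of $C\setminus K$ and the counting bound $|E(K)|-|E(W)|\ge|K\setminus W|+1$. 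Your route is longer but fully explicit, whereas the paper's two-paths step is stated somewhat informally. One small point worth making explicit in your write-up: the two constraints on $r$ (that $r^{-1}$ lie strictly between $1$ and the smallest load exceeding~$1$, and that $r>|K_C|/(|K_C|+1)$ for every non-simple component $C$) are simultaneously satisfiable since the graph is finite.
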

	
	\begin{proof}
		For the first statement, we take $t=1$ in Lemma~\ref{lem-load-variational-characterization} and note that the maximizer of $f_1(\cdot)$ (the one that also maximizes the set size) is the union of all non-tree components. 
		
		For the second claim, without loss of generality, we may assume $G$ is connected. We also assume that $G$ is non-simple, since otherwise there is no vertex with balanced load larger than $1$. Let $U=\{x\in V:\partial\theta(x)>1\}$. Since $V$ is finite, we have for any small enough $\eta>0$, it holds that $U=\{x\in V:\partial\theta(x)\ge 1+\eta\}$ and thus $U$ is the maximizer of $f_{1+\eta}$. This implies that $U$ is a $2$-core, because removing any vertex in $U$ will not increase $f_{1+\eta}(\cdot)$. 
		
		We further claim that $U$ is the maximal $2$-core in $G$. Assuming the contrary, let $W$ be another $2$-core that does not contained in $U$. Choose $x\in W\setminus U$, and let $(x,u),(x,v)$ be two edges within $W$. Since $G$ is connected and $W$ is a $2$-core, both $G-\{(x,u)\}$ and $G-\{(x,v)\}$ are connected. Therefore, there are two distinct paths that connecting $x$ to $U$ (though they might share common edges). Then it is straightforward to check that adding all the vertices on these two paths (including $x$) to $U$ will increase the function $f_{1+\eta}$ provided that $\eta$ is small enough (say $\eta<|V|^{-1}$). This leads to a contradiction and thus proves the second statement. 
	\end{proof}
	
	\subsection{The limiting load distribution}\label{subsec-limiting-load-measure}
Motivated by the connection between the balanced load function and the densest subgraph problem, \cite{Haj96} studied the empirical distribution of the balanced load function in sparse \ER graphs and infinite trees. The author conjectured that as the size of the \ER graph tends to infinity, the empirical distribution converges in a suitable sense to a deterministic measure dictated by random trees. Additionally, he proposed a variational characterization of the limiting measure based on non-rigorous arguments.

The conjecture was resolved in \cite{AS16} for a broader class of sparse random graphs, using the unified framework of local weak convergence. The authors showed that if a sequence of graphs $ \{G_n\} $ converges to a probability measure $ \mathscr{P} $ on rooted graphs in the local weak convergence sense, then the empirical balanced load distribution of $ \{G_n\} $ weakly converges in probability to a measure $ \mu $ on $ \mathbb{R}_{\ge 0} $, determined by $ \mathscr{P} $. Since sparse \ER graphs locally weakly converge to Poisson Galton-Watson trees, the results in \cite{AS16} apply, yielding a family of limiting distributions $ \{\mu_\lambda\}_{\lambda > 0} $ arising from Poisson Galton-Watson trees $ \{\operatorname{PGWT}(\lambda)\}_{\lambda > 0} $. The authors also established the conjectured variational characterization of $ \mu_\lambda $. Their proof relies heavily on the objective method; while we do not delve into the details here, we refer interested readers to \cite{AS04} for an excellent survey on this approach. We will state only the corollaries of results in \cite{AS16} that are particularly relevant for our analysis.

	\begin{proposition}\label{prop-weak-convergence-of-empirical-measure}\cite[Theorem 1]{AS16}
		There exists a family of deterministic measure $\{\mu_\lambda\}_{\lambda>0}$ with the following property: For $G_n\sim \mathbf G(n,\frac\lambda n)$ and any $a$ not an atom of $\mu_\lambda$, it holds that
		\begin{equation}\label{eq-ewak-convergence}
			\frac{\#\{x\in V(G_n):\partial\theta(x)>a\}}{n}\stackrel{\text{in probability}}{\longrightarrow} \mu_\lambda\big((a,\infty)\big)\text{ as }n\to\infty\,.
		\end{equation}
	\end{proposition}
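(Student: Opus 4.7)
Since this proposition is essentially a restatement of \cite[Theorem 1]{AS16}, the plan is to follow the objective method developed by Aldous--Steele and implemented by Anantharam--Salez. The argument has three main ingredients: (a) establish local weak convergence of $G_n \sim \mathbf{G}(n,\lambda/n)$, rooted at a uniformly chosen vertex, to the Poisson Galton-Watson tree $\operatorname{PGWT}(\lambda)$; (b) extend the notion of balanced load to infinite rooted graphs and identify $\mu_\lambda$ as the law of the balanced load at the root of $\operatorname{PGWT}(\lambda)$; (c) show that the functional ``root balanced load'' is continuous under local weak convergence at generic thresholds, so that the empirical distribution of loads converges to $\mu_\lambda$ in probability. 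Ingredient (a) is classical: the radius-$R$ neighborhood of a uniform vertex in $\mathbf{G}(n,\lambda/n)$ has total-variation distance $o(1)$ from the first $R$ generations of $\operatorname{PGWT}(\lambda)$, by the standard branching-process coupling.

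The heart of the proof, and the main obstacle, is step (b), since $\operatorname{PGWT}(\lambda)$ is infinite with positive probability (already for $\lambda > 1$), so the variational characterization of Lemma~\ref{lem-load-variational-characterization} does not apply directly. Following \cite{AS16}, I would encode the balanced condition as a recursive distributional equation on oriented edges of the tree: one seeks a unimodular joint law for $\{\theta(x \to y)\}_{(x \to y) \in \vec{E}}$ with $\theta(x \to y) + \theta(y \to x) = 1$ satisfying \eqref{eq-balanced-load} almost surely. Existence and uniqueness in law of $\partial\theta(\text{root})$ come from a monotone fixed-point argument leveraging strict convexity: any solution must be obtainable as a limit of balanced loads on finite truncations (balls of radius $R$ endowed with arbitrary but consistent boundary loads), and the global optimality characterization underlying Lemma~\ref{lem-load-variational-characterization} forces any two such solutions to agree almost surely. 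The resulting distribution of $\partial\theta(\text{root})$ is defined to be $\mu_\lambda$, and this construction simultaneously yields the variational characterization of $\mu_\lambda$ conjectured in \cite{Haj96}.

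For step (c), the key observation is that whenever $a$ is not an atom of $\mu_\lambda$, the indicator $\mathbf{1}\{\partial\theta(v) > a\}$ is an \emph{asymptotically local} functional: there exists a finite radius $R = R(\varepsilon)$ such that computing $\partial\theta(v)$ on only the ball of radius $R$ around $v$ agrees with the true load with probability at least $1 - \varepsilon$, where the continuity-point hypothesis is used precisely to rule out a boundary-effect accumulation of mass at $a$ as $R \to \infty$. Combining this with (a) yields $\mathbb{E}[\#\{x \in V(G_n) : \partial\theta(x) > a\}/n] \to \mu_\lambda((a,\infty))$. To upgrade to convergence in probability, one applies a standard second-moment bound: two independent uniform vertices have asymptotically independent $R$-neighborhoods in $\mathbf{G}(n,\lambda/n)$, so the variance of the empirical count is $o(1)$, yielding the claimed convergence in probability.
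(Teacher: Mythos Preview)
The paper does not prove this proposition at all: it is stated with the citation \cite[Theorem~1]{AS16} and used as a black box, with the surrounding text explicitly saying ``we will state only the corollaries of results in \cite{AS16} that are particularly relevant for our analysis.'' So there is no paper-proof to compare against; your proposal is effectively a sketch of the original Anantharam--Salez argument rather than an alternative to anything in this paper.

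That said, your outline of the objective-method proof is broadly faithful to \cite{AS16}. One point worth sharpening is step~(b): the uniqueness of the balanced load on the infinite tree is more delicate than ``any two solutions agree almost surely by strict convexity,'' since on infinite graphs the convex-optimization characterization is not immediately available. Anantharam--Salez handle this via a careful analysis of the recursive distributional equation and unimodularity (mass-transport), and the passage from finite-ball truncations to the limit requires showing that boundary effects vanish in a quantitative sense. Your description of step~(c) is accurate in spirit, though in \cite{AS16} the upgrade to convergence in probability comes more directly from the general local-weak-convergence machinery rather than an ad hoc second-moment computation. For the purposes of this paper, however, none of this matters: the proposition is simply imported.
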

	
Beyond the ``bulk measure" convergence result, \cite{AS16} also proved the convergence of the maximum balanced load, assuming the degrees of $ \{G_n\} $ have exponential tails. In particular, this result applies to sparse \ER graphs, leading to the following proposition.

	\begin{proposition}\label{prop-maximal-density}\cite[Theorem 3]{AS16}
		Let $\rho(\lambda)=\sup\{t\in \mathbb{R}:\mu_\lambda((t,\infty))>0\}$ be the rightmost point of $\operatorname{supp}(\mu_\lambda)$. Then for $G_n\sim \mathbf{G}(n,\frac\lambda n)$, it holds that the maximal load in $G_n$ (i.e., the maximal subgraph density of $G_n$) converges in probability to $\rho(\lambda)$ as $n\to\infty$.
	\end{proposition}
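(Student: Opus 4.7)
The plan is to split the convergence into a lower and an upper bound. By Lemma~\ref{lem-load-variational-characterization}, the maximum balanced load of any graph equals its maximum subgraph density $\max_{U}|E(U)|/|U|$, so the claim reduces to showing this quantity tends to $\rho(\lambda)$ in probability.

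The lower bound follows essentially for free from Proposition~\ref{prop-weak-convergence-of-empirical-measure}. Fix $\varepsilon>0$. Since $\rho(\lambda)$ is the rightmost point of $\operatorname{supp}(\mu_\lambda)$ we have $\mu_\lambda((\rho(\lambda)-\varepsilon,\infty))>0$, and since $\mu_\lambda$ is a probability measure it has at most countably many atoms, so we may pick a non-atom $a\in(\rho(\lambda)-\varepsilon,\rho(\lambda))$ with $\mu_\lambda((a,\infty))>0$. Applying Proposition~\ref{prop-weak-convergence-of-empirical-measure}, the fraction of vertices of $G_n$ with $\partial\theta>a$ converges in probability to this positive constant. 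In particular, with high probability linearly many vertices have load exceeding $a>\rho(\lambda)-\varepsilon$, so the maximum load is at least $\rho(\lambda)-\varepsilon$.

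For the upper bound, I would run a union-bound over induced subgraphs of all sizes: for $1\le k\le n$ and $m\ge k(\rho(\lambda)+\varepsilon)$, the expected number of $k$-vertex subgraphs of $G_n$ with at least $m$ edges is at most $\binom{n}{k}\binom{k(k-1)/2}{m}(\lambda/n)^{m}$. A direct Stirling estimate, parametrised by $\gamma=k/n\in(0,1]$ and $\rho=m/k$, shows this expression is exponentially small in $n$ once $\rho$ strictly exceeds a deterministic first-moment threshold $\rho^\star(\lambda)$. Summing over $k$ and $m$ (splitting into $k=o(n)$ and $k=\Theta(n)$ to make the Stirling approximations uniform) then yields that the maximum subgraph density does not exceed $\rho^\star(\lambda)+\varepsilon$ with probability $1-o(1)$.

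The remaining and main obstacle is to identify $\rho^\star(\lambda)=\rho(\lambda)$. The definition of $\mu_\lambda$ proceeds through local weak convergence to $\operatorname{PGWT}(\lambda)$, so $\rho(\lambda)$ is a priori an abstract spectral-type quantity rather than an explicit first-moment threshold. To bridge the two, I would appeal to the variational characterisation of balanced loads on Galton--Watson trees developed in \cite{Haj96, AS16}: one can write $\rho(\lambda)$ as the supremum of densities admissible by a certain fixed-point recursion on $\operatorname{PGWT}(\lambda)$, and then match this recursion with the large-deviation rate controlling the first moment computed above. The lower-bound half of the proof already forces $\rho^\star(\lambda)\ge\rho(\lambda)$ (since the first-moment threshold cannot lie below the empirical maximum), so the delicate direction is $\rho^\star(\lambda)\le\rho(\lambda)$, which is where the tree analysis is really needed; any strict inequality would produce, via a second-moment or Poissonisation argument, sublinearly many dense subgraphs of density strictly above $\rho(\lambda)$, contradicting the consistency of the variational formula with the tree limit.
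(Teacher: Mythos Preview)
The paper does not give its own proof of this proposition: it is quoted verbatim as \cite[Theorem~3]{AS16}, with the surrounding text noting only that the proof there relies on the objective method and an exponential-tail hypothesis on the degree sequence. So there is no in-paper argument to compare against.

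On the merits of your sketch: the lower bound via Proposition~\ref{prop-weak-convergence-of-empirical-measure} is correct and is the easy half. The upper bound is where the content lies, and your proposal has a genuine gap. A first-moment union bound over $k$-subsets will certainly produce \emph{some} threshold $\rho^\star(\lambda)$ above which no subgraph of that density exists with high probability, and your deduction $\rho^\star(\lambda)\ge\rho(\lambda)$ from the lower bound is fine. But the reverse inequality $\rho^\star(\lambda)\le\rho(\lambda)$ is not established by what you wrote. Appealing to the variational characterisation on $\operatorname{PGWT}(\lambda)$ from \cite{AS16} is close to circular, since matching that tree recursion to the finite-$n$ maximum is precisely the content of \cite[Theorem~3]{AS16}. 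And the suggested second-moment route does not close the loop: even if a second-moment argument produced, with positive probability, subgraphs of density strictly above $\rho(\lambda)$ on a sublinear vertex set, this would not contradict anything you have access to --- weak convergence of the empirical load measure (Proposition~\ref{prop-weak-convergence-of-empirical-measure}) is blind to sublinear outliers, which is exactly why \cite{AS16} needs the extra exponential-tail assumption to upgrade bulk convergence to convergence of the maximum. In short, you have correctly located the hard step but not supplied an argument for it.
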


	\subsection{The orbit decomposition}\label{subsec-orbit-decomposition}
	
We now return to the correlated \ER graph model. In Section~\ref{sec-positive}, we will need to handle certain large deviation events concerning the number of edges in (subgraphs of) the $ \pi $-intersection graph $ \mathcal{H}_\pi $. The first step in this analysis involves an orbit decomposition of $ \sigma = \pi^* \circ \pi^{-1} $, where $ \pi^* $ is the true matching. In this subsection, we present the simplest orbit decomposition scheme and give its first application. We will revisit this topic in Section~\ref{sec-positive} under a more complex setting, employing a more refined analysis. Thus, we intend for this discussion to serve as a warm-up for the more intricate arguments to follow.

	Recall that $\mathcal E_\pi$ denotes for the set of edges in the $\pi$-intersection graph $\mathcal H_\pi$. Our main goal is to prove the following lemma. The last statement therein will serve as a technical input in both the design and the analysis of the iterative matching algorithm in Section~\ref{sec-positive}. 
	\begin{lemma}\label{lem-max-E-pi}
		There exists $A=A(\alpha,\lambda)>0$, such that with high probability over $(\pi^*,G_1,G_2)\sim \mathcal P^*$, we have $|\mathcal E_\pi|\le An$ for any $\pi\in \operatorname{S}_n$. In particular, with high probability for any $\eta>0$ and $\pi\in \operatorname{S}_n$, the number or vertices with load at least $A/\eta$ in $\mathcal H_\pi$ is at most $\eta n$. 
	\end{lemma}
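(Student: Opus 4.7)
The plan is to establish the uniform bound $|\mathcal{E}_\pi| \le An$ via an orbit decomposition of $\sigma := (\pi^*)^{-1}\pi$, and then derive the corollary on high-load vertices directly from Lemma~\ref{lem-load-variational-characterization}. Writing out the definitions, one has $\mathbf{1}_{e \in \mathcal{E}_\pi} = I_e J^1_e I_{\sigma(e)} J^2_{\pi(e)}$ for each $e = \{i,j\}$. I would split $|\mathcal{E}_\pi| = A_\pi + B_\pi$, where $A_\pi$ sums over pairs with $\{\sigma(i),\sigma(j)\} = \{i,j\}$ and $B_\pi$ sums over the remaining (``moved'') pairs.

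For $A_\pi$, I would exploit the following deterministic identity: whenever $\sigma$ fixes $\{i,j\}$ as a set (either fixing both endpoints or acting as a 2-cycle on them), one has $I_{\sigma(e)} = I_e$ and $\pi(\{i,j\}) = \pi^*(\{i,j\})$ as unordered pairs, so the indicator reduces to $I_e J^1_e J^2_{\pi^*(e)} = \mathbf{1}_{e \in \mathcal{E}_{\pi^*}}$. This yields the \emph{uniform} bound $A_\pi \le |\mathcal{E}_{\pi^*}|$ for all $\pi$. Since $|\mathcal{E}_{\pi^*}|$ is a sum of $\binom{n}{2}$ independent $\operatorname{Bernoulli}(ps^2)$ variables with mean $\sim \lambda n/2$, a standard Chernoff bound gives $|\mathcal{E}_{\pi^*}| \le 2\lambda n$ with probability $1-e^{-\Omega(n)}$.

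For $B_\pi$, I would decompose according to the orbits of $\sigma$ acting on pairs (of length $\ge 2$). Writing $B_\pi = \sum_O C_O$, the contributions $C_O$ are mutually independent across orbits, since distinct orbits involve disjoint $I, J^1, J^2$ variables. Inside each orbit the dependence graph of the individual indicators is a cycle, hence has max degree $2$, so by a coloring/Hölder argument the moment generating function satisfies the same form as the independent case up to a constant factor in the exponent. Since the effective mean is $p^2 s^2 \cdot (\text{\# moved pairs}) \le kn \cdot p^2 s^2 = kp\lambda = O(k\lambda n^{-\alpha})$ where $k = |\operatorname{supp}(\sigma)|$, optimizing the Chernoff parameter yields
\[ \mathbb{P}[B_\pi \ge An] \;\le\; \exp\!\big(-\Omega(An\alpha \log n)\big) \]
for $A$ large enough; the crucial $\alpha \log n$ factor arises from the extra $p = n^{-\alpha+o(1)}$ in the effective mean (relative to the ``fixed'' regime). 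A union bound over the $\le n^k$ permutations with $|\operatorname{supp}(\sigma)| = k$, then summing over $k = 2,\dots,n$, yields $\mathbb{P}[\exists\,\pi : B_\pi \ge An] \le \sum_{k=2}^n \exp((k - \Omega(An\alpha))\log n)$, which is $o(1)$ as long as $A$ exceeds a constant multiple of $1/\alpha$. Combined with the bound on $A_\pi$, this proves the first part of the lemma for a suitable $A = A(\alpha,\lambda)$.

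For the ``in particular'' statement: fix $\eta > 0$ and apply Lemma~\ref{lem-load-variational-characterization} to $\mathcal{H}_\pi$ with $t = \eta/A$. The set $U_t := \{v \in [n] : \partial\theta_\pi(v) \ge A/\eta\}$ maximizes $f_t(W) = t|\mathcal{E}_\pi(W)| - |W|$, so $f_t(U_t) \ge f_t(\emptyset) = 0$ forces $|U_t| \le t |\mathcal{E}_\pi(U_t)| \le t|\mathcal{E}_\pi| \le (\eta/A)(An) = \eta n$. The main technical obstacle is the sharp Chernoff estimate on $B_\pi$: producing the $\exp(-\Omega(An\alpha\log n))$ tail requires carefully combining orbit-wise independence with the Poisson-type tail of the small-probability indicators, which is what ultimately beats the $e^{n\log n}$ entropic cost of the union bound over $\sigma$.
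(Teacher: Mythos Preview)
Your proof follows the same route as the paper's: decompose by orbits of $\sigma$, control the contribution from $\Sigma$-fixed pairs deterministically by $|\mathcal E_{\pi^*}|$ (you use a direct Chernoff bound on $|\mathcal E_{\pi^*}|$, the paper invokes Proposition~\ref{prop-maximal-density}), and beat the $n!$ union bound on the moved-pair contribution via a Chernoff tail of order $\exp(-\Omega(n\log n))$. The only real difference is that the paper obtains the tail on the moved part by quoting Lemma~\ref{lem-tail-crude} (i.e.\ \cite[Lemma~2.4]{DD23b}) with rate $\gamma=\alpha\wedge\tfrac12$, whereas your H\"older/coloring sketch yields a cruder constant in the exponent --- still sufficient, since one only needs $A>C/\alpha$ for the union bound to close.
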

	
	%Recall that $\mathcal E_\pi(U)$ denotes the set of edges within $U$ in the $\pi$-intersection graph $\mathcal H_\pi$ of $G_1,G_2$. In Section~\ref{sec-positive} we will need to control the probability that $|\mathcal E_\pi(W)\setminus \mathcal E_\pi(U)|$ is atypically large for some $\pi\in \operatorname{S}_n$ and $U\subset W\subset [n]$. To this end, we will need to understand the large deviation properties of variables with this type. This will related with the edge orbit decomposition of $\pi^*\circ \pi^{-1}$ that we will introduce below. 
	
	We will prove this lemma by a (truncated) union bound. To this end, we need to understand the probability that $|\mathcal E_\pi|$ gets large, which motivates the orbit decomposition below. In what follows, we conditioned on the true matching $\pi^*\in \operatorname{S}_n$ and so $(G_1,G_2)\sim \mathcal P_{\pi^*}$. 
	We also fix $\pi\in \operatorname{S}_n$. Let $\sigma=\pi^*\circ \pi^{-1}\in \operatorname{S}_n$. Recalling that $\operatorname{E}$ is the set of unordered pairs $(i,j)$ with $1\le i\neq j\le n$, it is clear $\sigma$ induces a permutation $\Sigma$ on $\operatorname{E}$ give by $\Sigma(i,j)=(\sigma(i),\sigma(j))$, and $\operatorname{E}$ decomposes into cycles of the form $(e_1,\dots,e_k)$ such that $\Sigma(e_i)=e_{i+1},1\le i\le k-1$ and $\Sigma(e_k)=e_1$. We shall refer to these cycles as the orbits induced by $\pi$, and denote the set of cycles by $\mathcal O_\pi$ (be aware that this implicitly depends on $\pi^*$). 
	
	For any $x\in [n]$, let $C_\sigma(x)$ be the cycle in $\sigma$ containing $x$, and let $n_\sigma(x)$ be its length. Similar notations $C_\Sigma(e),n_\Sigma(e)$ apply for $e\in \operatorname{E}$. Some cycles in $\Sigma$ are of a special type: for $x\in [n]$ with $n_\sigma(x)$ being even, let $y=\sigma^{(n_\sigma(x)/2)}$, then $C_\Sigma(x,y)$ is a cycle of length $n_\sigma(x)/2$. Besides the special cycles, there is a simple relation between $n_\Sigma(x,y)$ and $n_\sigma(x),n_\sigma(y)$ given by $n_\Sigma(x,y)=\operatorname{LCM}(n_\sigma(x),n_\sigma(y))$. In particular, we have $n_\Sigma(x,y)\ge n_\sigma(x)\vee n_\sigma(y)$ for any $(x,y)$ that does not belong to a special cycle (one should think that special cycles are rare).

	%We further fix two subsets $U\subset W$ of $[n]$. Let $\operatorname{E}_{U},\operatorname{E}_W\subset \operatorname{E}$ be the set of unordered pairs in $U,W$, respectively, and $\operatorname{E}_{U,W}=\operatorname{E}_W\setminus \operatorname{E}_U$. Then, restricting in $\operatorname{E}_{U,W}$, some cycles break down to shorter chains $(e_1,\dots,e_k)$ with $e_i\in \operatorname{E}_{U,W},1\le i\le k$ such that $\Sigma(e_i)=e_{i+1},\forall 1\le i\le k-1$, and $\Sigma^{-1}(e_1),\Sigma(e_k)\notin \operatorname{E}_{U,W}$, while other cycles remain. We will call these chains and cycles the orbits of $\sigma=\pi^*\circ\pi^{-1}$ in between $W$ and $U$, and denote the set of these orbits as $\mathcal O_\pi(U,W)$ (keep in mind that this implicitly depends on $\pi^*$). We stress the following simple fact that will become crucial in Section~\ref{sec-positive}:
	%\begin{fact}
	%	For any $(x,y)\in \operatorname{E}_{U,W}$ with $x\in U\setminus W$ that does not belong to a special cycle in $\mathcal O_\pi(U,W)$, we have $n_\sigma(x)\le n_\Sigma(x,y)$. As a result, edges in $\mathcal E_\pi(W)\setminus \mathcal E_\pi(U)$ coming from non-special cycles with length at most $k$ is contained in the set $\mathcal E_\pi(U\cup N_k)\setminus \mathcal E_\pi(U)$, where $N_k=\{x\in U\setminus W:n_\sigma(x)\le k\}$. 
	%\end{fact}
	
	The point of defining the orbit set $\mathcal O_\pi$ is regarding the number of edges in $\mathcal E_\pi$, the contributions from different orbits in $\mathcal O_\pi$ are independent of each other under $\mathcal P_{\pi^*}$. Moreover, it turns out that edges from different types of orbits have different large deviation rates. Inspired by this, we group edges in $\mathcal E_\pi$ according to the type of orbits they come from, and define respectively $E_s$ and $E_k,k=1,2,\dots,n^2$ to be the number of edges in $\mathcal E_\pi$ that belongs to special cycles, and non-special cycles with length $k$, $k=1,2,\dots,n^2$ in $\mathcal O_\pi$. It is clear that 
	\[
	|\mathcal E_\pi|=E_s+E_1+E_2+\cdots+E_{n^2}\,,
	\]
	and the random variables $E_s,E_k,k=1,2,\dots,n^2$ are independent (under $\mathcal P_{\pi^*}$). The key fact we need is that for any $k\in \{\operatorname{s},2,3,\dots,n^2\}$, the tail probability $\mathcal P_{\pi^*}[E_k\ge x]$ decays like $\exp(-\Theta(x\log n))$, as shown in the next lemma. 	
	
	\begin{lemma}\label{lem-tail-crude}
		There exists $M=o(n\log n)$ such that the following holds for any $x\ge 0$:
		\begin{align}\label{eq-tail-special-cycle}
			&\mathcal P_{\pi^*}[E_s\ge x]\le \exp(M-x\log n)\,,
		\end{align}
		and for $E_{>1}=E_2+\cdots+E_{n^2}$, $\gamma=\alpha\wedge 1/2$,  
		\begin{equation}\label{eq-tail-k}
			\mathcal P_{\pi^*}[E_{>1}\ge x]\le  \exp(M-\gamma  x\log n)\,.
		\end{equation}
	\end{lemma}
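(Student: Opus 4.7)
The plan is to prove both bounds by controlling the moment generating functions $\mathbb{E}_{\mathcal P_{\pi^*}}[n^{E_s}]$ and $\mathbb{E}_{\mathcal P_{\pi^*}}[n^{\gamma E_{>1}}]$ and then applying Markov. Concretely, it suffices to exhibit some $M = o(n\log n)$ with both MGFs bounded by $e^M$. The key structural input is that the contribution $Y_c$ of each $\Sigma$-orbit $c \in \mathcal O_\pi$ to $|\mathcal E_\pi|$ depends only on the variables $\{I_e, J^1_e, J^2_e : e \in c\}$, so under $\mathcal P_{\pi^*}$ the random variables $\{Y_c\}$ are mutually independent and the MGFs factor over the orbit decomposition.

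For $E_s$, a crude stochastic bound suffices. Writing a $\Sigma$-orbit as $c = (e_1,\dots,e_k)$, the contribution is $Y_c = \sum_{i=1}^k I_{e_{i-1}} I_{e_i} J^1_{e_i} J^2_{e_i}$ with cyclic indices. Using $I_{e_{i-1}} I_{e_i} \le I_{e_i}$ pointwise, $Y_c$ is stochastically dominated by $\mathrm{Bin}(k, ps^2)$, giving $\mathbb{E}[n^{Y_c}] \le (1 + (n-1)ps^2)^k \le (1+\lambda)^k$. Since each length-$k$ special orbit arises from a $\sigma$-cycle of length $2k$, the total edge count across all special orbits is at most $n/2$, and multiplying the per-orbit bounds yields $\mathbb{E}[n^{E_s}] \le e^{\lambda n/2}$. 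This gives the first inequality with $M_1 = \lambda n/2$.

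For $E_{>1}$ the above is too weak, as it discards one factor of $p$ per edge (from the second required $I$-variable) and thus yields only a $\log n$ rate rather than $\gamma \log n$. For a non-special orbit $c$ of length $k\ge 2$, I would condition on $\epsilon_i := I_{e_i}$: since $Y_c\mid\epsilon \sim \mathrm{Bin}(W_c, s^2)$ with $W_c := \sum_i \epsilon_{i-1}\epsilon_i$, one has
\[
\mathbb{E}[n^{\gamma Y_c}] = \mathbb{E}_\epsilon\!\left[(1+a)^{W_c}\right], \quad a := (n^\gamma-1)s^2.
\]
Expanding $(1+a)^{W_c} = \sum_j \binom{W_c}{j}a^j$ and computing $\mathbb{E}\!\left[\binom{W_c}{j}\right] = \sum_{T\subseteq[k]:|T|=j} p^{|T|+m(T)}$, where $m(T)$ is the number of cyclic runs of $T$, and then using $\#\{T:|T|=j,m(T)=m\}\le \binom{k}{m}\binom{j-1}{m-1}$ together with the geometric identity $\sum_{j\ge m}\binom{j-1}{m-1}(ap)^j = (ap)^m/(1-ap)^m$, one obtains
\[
\mathbb{E}[n^{\gamma Y_c}] \le \Big(1 + \tfrac{ap^2}{1-ap}\Big)^{k} \le \exp(2 k ap^2)
\]
whenever $ap\le 1/2$. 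Substituting $ap^2 \le n^\gamma p^2 s^2 = \lambda p n^{\gamma-1} = \lambda n^{\gamma-\alpha-1+o(1)}$ and using $\gamma\le\alpha$, the per-orbit exponent is $O(\lambda k/n)\cdot n^{o(1)}$. Summing over non-special orbits of length $\ge 2$ (whose total edge count is $\le\binom{n}{2}$) gives $\log\mathbb{E}[n^{\gamma E_{>1}}] \le \lambda n\cdot n^{o(1)} = o(n\log n)$, completing the bound.

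The main obstacle is this sharper MGF computation for $E_{>1}$: one must honestly exploit the bilinear structure $\epsilon_{i-1}\epsilon_i$ through the run-counting identity (or equivalently via a $2\times 2$ transfer matrix for the cyclic chain $\epsilon_1,\dots,\epsilon_k$), since naive stochastic domination by a Binomial forfeits one factor of $p$ per edge and is too lossy. The cap $\gamma \le 1/2$ in $\gamma = \alpha\wedge 1/2$ is a convenient safety margin that keeps $ap = \lambda n^{\gamma-1+o(1)}$ uniformly bounded away from $1$; otherwise, when $\alpha$ approaches $1$ the naive choice $\gamma = \alpha$ could give $ap\asymp\lambda>1$, breaking the geometric-series convergence.
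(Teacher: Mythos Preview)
Your approach is correct and coincides with the paper's: bound the MGF $\mathbb{E}[n^{\gamma E}]$ by factoring over $\Sigma$-orbits, compute the per-orbit MGF via the cyclic bilinear structure in the $I$-variables (what you do by run-counting is exactly the combinatorial expansion of the $2\times 2$ transfer matrix the paper invokes via \cite[Lemma~2.4]{DD23b} and Lemma~\ref{lem-exponential-moments}), and conclude by Markov. Your bound for $E_s$ and your explanation of why $\gamma\le 1/2$ is needed are both on point.

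There is, however, one small slip in the $E_{>1}$ computation. Your identity $\mathbb{E}\binom{W_c}{j}=\sum_{|T|=j}p^{|T|+m(T)}$ is correct only for $T\subsetneq[k]$: when $T=[k]$ the single cyclic run wraps around and involves $k$ distinct $\epsilon$-indices, not $k+1$, so the contribution is $(ap)^k$ rather than $a^k p^{k+1}$. Since $p^k>p^{k+1}$, your enumeration undercounts this term, and the inequality $\mathbb{E}[(1+a)^{W_c}]\le (1+ap^2/(1-ap))^k$ is false as written. The correct bound (equivalently, $\mu_1^k+\mu_2^k$ from the transfer matrix) is
\[
\mathbb{E}[(1+a)^{W_c}]\ \le\ \Bigl(1+\tfrac{ap^2}{1-ap}\Bigr)^{k}+(ap)^{k}\ \le\ \exp(2k\,ap^2)\cdot\bigl(1+(ap)^{k}\bigr).
\]
This extra factor is harmless: since $\gamma\le 1/2$ gives $ap=\lambda n^{\gamma-1}\le \lambda n^{-1/2}$, and the number of non-special $k$-orbits is at most $\binom{n}{2}/k$, one has $\sum_c (ap)^{k_c}\le \sum_{k\ge 2}\tfrac{n^2}{2k}(ap)^k = O_\lambda(n)$, so $\prod_c(1+(ap)^{k_c})=e^{O(n)}$. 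With this correction your bound $\log\mathbb{E}[n^{\gamma E_{>1}}]=o(n\log n)$ goes through exactly as you outlined.
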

	
	Lemma~\ref{lem-tail-crude} is just a special case of \cite[Lemma 2.4]{DD23b}, and we omit the proof here. In the next section, we will give precise tail estimates for each of the variables $E_k$ (under an appropriate conditioning therein) in order for a much more delicate estimation.

	\begin{proof}[Proof of Lemma~\ref{lem-max-E-pi}]
		We focus on the first statement as the second one follows readily. 
		We still work under the conditioning of $\pi^*$. Recalling $\gamma=\alpha\wedge 1/2>0$, we pick $C$ such that
		\begin{equation}
			\label{eq-choice-C}
			1-\gamma (A-\varrho(\lambda)-1)<0\,.
		\end{equation}
	We define $\mathcal U$ as the event that $|\mathcal E_\pi|\ge An$ for some $\pi\in \operatorname{S}_n$, and $\mathcal V$ as the event that the maximal subgraph density of $\mathcal H_{\pi^*}$ is at most $\varrho(\lambda)+1$. Since $\mathcal H_{\pi^*}\sim \mathbf G(n,\frac\lambda n)$, from Proposition~\ref{prop-maximal-density} we conclude that $\mathcal P_{\pi^*}[\mathcal V]=1-o(1)$. Therefore, it suffices to show that $\mathcal P_{\pi^*}[\mathcal U\cap \mathcal V]=o(1)$. 
		
		For each fixed $\pi$, recall the decomposition 
		\[
		|\mathcal E_\pi|=E_s+E_1+E_2+\cdots+E_n=E_s+E_1+E_{>1}\,.
		\]
		Since $E_1$ is the number of edges in $\mathcal H_{\pi}$ within fixed points of $\sigma=\pi^*\circ \pi^{-1}$, and all these edges also lie in $\mathcal H_{\pi^*}$, we conclude that $E_1\le |\mathcal E_{\pi^*}([n])|$. This is further bounded by $(\rho(\lambda)+1)n$ under $\mathcal V$, and thus $\mathcal U\cap \mathcal V$ implies $E_s+E_{>1}\ge (A-\varrho(\lambda)-1)n]\equiv \xi n$ for some $\pi$. Therefore, by taking union bounds twice we have
		\begin{align*}
			\mathcal P_{\pi^*}[\mathcal U\cap \mathcal V]
			\le &\ n!\cdot \mathcal P_{\pi^*}[E_s+E_{>1}\ge \xi n]\\
			\le&\ \exp (n\log n)\cdot \sum_{\substack{x_1,x_2\in \mathbb{N}\\x_1+x_2\ge \xi n}}\mathcal P_{\pi^*}[E_s\ge x_1, E_{>1}\ge x_2]\\
			\le&\ \exp((1+o(1))n\log n)\sup_{x_1+x_2\ge \xi n}\mathcal P_{\pi^*}[E_s\ge x_1]\mathcal P_{\pi^*}[E_{>1}\ge x_2]\,,
		\end{align*}
		where the last inequality follows from independence. Using Lemma~\ref{lem-tail-crude}, we conclude the above expression is bounded by
		\begin{align*}
			\exp((1+o(1))&n\log n) \cdot\sup_{x_1+x_2 \ge \xi n}\exp(2M-(x_1+\gamma x_2)\log n)\\
			=&\ \exp((1-\gamma \xi+o(1))n\log n)\,,
		\end{align*}
		which is $o(1)$ by our choice of $A$ in \eqref{eq-choice-C},  as desired.  
	\end{proof}

	\section{Proof of the positive result}\label{sec-positive}
This section is devoted to proving Theorem~\ref{thm-main-ER}-(i). Fix $\varepsilon>0$ and assume that $ (\pi^*, G_1, G_2) \sim \mathcal{P}^* $. Recall that for any $ \pi \in \operatorname{S}_n $, we denote by $ \partial \theta_\pi $ the load function of $ \mathcal{H}_\pi $. Our goal is to find an estimator $ \widetilde{\pi} = \widetilde{\pi}(G_1, G_2) $ such that $ \widetilde{\pi} $ agrees with $ \pi^* $ on most of the heavy vertices, i.e., those vertices $ i \in [n] $ for which $ \partial \theta_{\pi^*}(i) \ge \alpha^{-1} + \varepsilon $. As mentioned earlier, we will construct an iterative algorithm that recovers almost all of the heavy vertices in batches, starting from the heaviest and progressing to the lightest.

We now present our algorithm. It takes two graphs $ G_1 $ and $ G_2 $ along with two parameters $ \varepsilon, \eta > 0 $ as input and outputs a partial matching $ \widetilde{\pi} $. The algorithm proceeds greedily, aiming to maximize the size of a specific set $ U_t $ at each step. To handle ties, we fix an arbitrary total ordering $ \prec $ on $ 2^{[n]} $.

	\begin{algorithm}
		\caption{Iterative Matching Algorithm}
		\begin{algorithmic}
			\State \textbf{Input:} $G_1$, $G_2$, $\varepsilon,\eta>0$. 
			\State Let $N=\lceil(A/\eta-\alpha^{-1}-\varepsilon)/\eta\rceil$, where $A=A(\alpha,\lambda)$ is defined as in Lemma~\ref{lem-max-E-pi}. 
			\State Define $I_0=[A/\eta,\infty)$, $I_k= [A/\eta-k\eta,A/\eta-(k-1)\eta)$, $k=1,\dots,N-1$, and $I_N=[\alpha^{-1}+\varepsilon,A/\eta-(N-1)\eta)$.
			\State Set $U_{-1}=\emptyset$, $\widetilde\pi:U_{-1}\to [n]$. 
			\For{$ 0\le k\le N$}
			\State Enumerate over all intersection graphs $\mathcal H_\pi$ where $\pi\mid_{U_{k-1}}=\widetilde{\pi}$.
			\State Select $\pi_k$ that maximizes the size of $V_k^{\pi_k}=\{i\in [n]:\partial\theta_\pi(i)\in I_k\}$. If there is a tie, we choose $\pi_k$ minimizing $V_k^{\pi_k}$ under $\prec$.
			\State Set $U_{k}=U_{k-1}\cup V_k^{\pi_k}$, and
			update $\widetilde\pi$ to $\widetilde{\pi}:U_{k}\to [n]$ defined by $\widetilde{\pi}=\pi_k\mid_{U_{k}}$. 
			\EndFor\\
			\textbf{Output:} $U_N,\widetilde\pi:U_N\to[n]$. 
		\end{algorithmic}
	\end{algorithm}
	
Given the output $ (U_N, \widetilde{\pi}) $ of the algorithm, we denote by $ U^{\operatorname{cor}} $ the set of vertices $ i \in U_N $ such that $ \widetilde{\pi}(i) = \pi^*(i) $. We also define $ U^{\operatorname{heavy}} $ as the set of heavy vertices. Our goal is to show that, provided $ \eta $ is sufficiently small, with high probability $ U^{\operatorname{cor}} $ contains almost the entirety of $ U^{\operatorname{heavy}} $. To establish this, we will use the stability result for the variational characterization of balanced loads (Lemma~\ref{lem-stability}). With this tool in hand, the following proposition ensures the performance of our algorithm. We write $ \alpha_\varepsilon = (\alpha^{-1} + \varepsilon)^{-1} $ and recall that $ f_{\alpha_\varepsilon}^{\pi^*}(U) \equiv \alpha_\varepsilon |\mathcal{E}_{\pi^*}(U)| - |U| $.

	\begin{proposition}\label{prop-algo-performance}
		For $(\pi^*,G_1,G_2)\sim \mathcal P^*$ and any $\varepsilon,\delta>0$, there exists $\eta_0=\eta_0(\alpha,\lambda,\varepsilon,\delta)$, such that whenever $\eta<\eta_0$, with high probability the set $U^{\operatorname{cor}}$ satisfies
		\begin{equation}\label{eq-near-maximizer}
			f^{\pi^*}_{\alpha_\varepsilon}(U^{\operatorname{cor}})\ge f^{\pi^*}_{\alpha_\varepsilon}(U^{\operatorname{cor}}\cup U^{\operatorname{heavy}})-\delta n\,,
		\end{equation} 
	\end{proposition}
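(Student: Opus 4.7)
My plan is to analyze the $N=\lceil(A/\eta-\alpha^{-1}-\varepsilon)/\eta\rceil$ rounds of the iterative algorithm by induction on $k$, showing that with high probability, at every round, the extension $\pi_k$ agrees with $\pi^*$ on all but a small fraction $\varepsilon_1(\eta)$ of the vertices in $V_k^{\pi_k}$. Summing over rounds yields $|U_N\setminus U^{\operatorname{cor}}|\le N\varepsilon_1(\eta)\cdot n=\delta_1(\eta)\cdot n$ with $\delta_1\downarrow 0$ as $\eta\downarrow 0$. At the same time, since $\pi^*$ (trivially modified to agree with $\widetilde\pi$ on $U_{k-1}$) is almost feasible at each round, the maximality of $|V_k^{\pi_k}|$ also forces $|U_N|\ge |U^{\operatorname{heavy}}|-o(n)$, so $U^{\operatorname{cor}}$ and $U^{\operatorname{heavy}}$ differ in at most $O(\delta_1 n)$ vertices. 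Combining this with Lemma~\ref{lem-max-E-pi} applied to $\mathcal H_{\pi^*}$ to bound the extra edges contributed by $U^{\operatorname{heavy}}\setminus U^{\operatorname{cor}}$, one obtains
\[
f^{\pi^*}_{\alpha_\varepsilon}(U^{\operatorname{cor}}\cup U^{\operatorname{heavy}})-f^{\pi^*}_{\alpha_\varepsilon}(U^{\operatorname{cor}})\le C\alpha_\varepsilon\delta_1 n<\delta n
\]
for $\eta$ small, which is exactly \eqref{eq-near-maximizer}.

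\textbf{Per-round first moment with well-balanced truncation.} The heart of the induction is a truncated first moment bound in the spirit of \cite{DD23b}. At round $k$, the event that $\pi_k$ places more than $\varepsilon_1 n$ mismatches inside $V_k^{\pi_k}$ is union-bounded over candidate pairs $(\pi,V)$ with $V_k^\pi=V$ and $|\{i\in V:\pi(i)\ne\pi^*(i)\}|\ge\varepsilon_1 n$:
\[
\sum_{V,\pi}\mathcal P_{\pi^*}\!\big[\partial\theta_\pi(i)\in I_k\ \text{for all}\ i\in V\big].
\]
The constraint $\partial\theta_\pi(i)\in I_k$ for every $i\in V$ forces, via Lemma~\ref{lem-load-variational-characterization}, both a lower bound $|\mathcal E_\pi(V)|\gtrsim(\rho_k-\eta)|V|$ with $\rho_k=A/\eta-(k-1)\eta$, and the crucial well-balanced truncation $|\mathcal E_\pi(W)|\le(\rho_k+\eta)|W|$ for every $W\subseteq V$. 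In this well-balanced regime, sharpened versions of the cycle-decomposition tail estimates of Section~\ref{subsec-orbit-decomposition} (stronger than Lemma~\ref{lem-tail-crude}) yield probability decay of $\exp(-\Theta(\rho_k n\log n))$, comfortably beating the $\exp(O(n\log n))$ enumeration cost from summing over $(\pi,V)$.

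\textbf{Conditioning and the main obstacle.} The per-round estimate above is conditional on the algorithm's state entering round $k$, which couples the law of $(G_1,G_2)$ to previous choices. I plan to neutralize this using the monotonicity of the balanced load (Lemma~\ref{lem-load-monotonicity}) together with the FKG inequality on the product Bernoulli measure of $\{I_{i,j},J^1_{i,j},J^2_{i,j}\}_{(i,j)\in\operatorname E}$: the algorithm's history is essentially monotone in the edges of $\mathcal H_{\pi^*}$, so each conditioning event can be relaxed to a monotone super-event under the unconditional $\mathcal P_{\pi^*}$ at the cost of an $e^{o(n\log n)}$ prefactor absorbed by the main decay (the decoupling hinted at in the overview via \cite{DDG24}). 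The main technical difficulty I expect is executing this decoupling cleanly across all $N=O(1/\eta)$ density tiers $\rho_k$ while simultaneously preserving the tier-appropriate well-balanced truncation, so that the refined edge-tail bounds apply at each tier uniformly in the history. Once this is done, the induction closes and the near-maximality \eqref{eq-near-maximizer} follows from the bookkeeping in the first paragraph.
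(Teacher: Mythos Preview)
Your per-round plan---truncated first moment with the well-balanced constraint, combined with FKG to strip the conditioning---is exactly the paper's approach, and that part is fine.

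The gap is in your first paragraph, the ``bookkeeping'' that goes from few mismatches to \eqref{eq-near-maximizer}. You argue that $|U_N|\ge|U^{\operatorname{heavy}}|-o(n)$ together with $|U_N\setminus U^{\operatorname{cor}}|\le\delta_1 n$ forces $U^{\operatorname{cor}}$ and $U^{\operatorname{heavy}}$ to differ on only $O(\delta_1 n)$ vertices. That implication is false as stated: the two facts only control cardinalities, and there is no a priori containment $U^{\operatorname{cor}}\subseteq U^{\operatorname{heavy}}$. A vertex $i\in U^{\operatorname{cor}}$ satisfies $\widetilde\pi(i)=\pi^*(i)$ and $\partial\theta_{\pi_N}(i)\ge\alpha^{-1}+\varepsilon$, but the load in $\mathcal H_{\pi_N}$ can differ from the load in $\mathcal H_{\pi^*}$ even at correctly matched vertices, since loads are non-local. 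So $|U^{\operatorname{heavy}}\setminus U^{\operatorname{cor}}|$ could in principle be large while both of your hypotheses hold, and your final edge-count step never gets off the ground.

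The paper closes this by arguing on $f_{\alpha_\varepsilon}$ directly rather than on set sizes. It assumes \eqref{eq-near-maximizer} fails, defines $U^{\operatorname{miss}}=\{v\in U^{\operatorname{heavy}}:v\notin U_N,\ \pi^*(v)\notin\widetilde\pi(U_N)\}$, and shows that for any $\pi$ extending $\widetilde\pi$ with $\pi|_{U^{\operatorname{miss}}}=\pi^*$ one has $f_{\alpha_\varepsilon}^\pi(U^{\operatorname{miss}}\cup U_N)>f_{\alpha_\varepsilon}^\pi(U_N)$, contradicting the fact that $U_N$ is the maximizer of $f_{\alpha_\varepsilon}^\pi$ (which is what the algorithm actually guarantees, via Lemma~\ref{lem-load-variational-characterization}). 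The few-mismatches bound enters only to show that $U^{\operatorname{heavy}}\setminus U^{\operatorname{cor}}$ and $U^{\operatorname{miss}}$ differ by at most $\tfrac{\delta n}{2D}$ vertices, and the edge discrepancy is controlled not by Lemma~\ref{lem-max-E-pi} (which only gives a global bound $|\mathcal E_\pi|\le An$) but by a separate estimate (Lemma~\ref{lem-D}) bounding the number of $\mathcal H_{\pi^*}$-edges touching a small set. You will need both of these ingredients to make the reduction work.
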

	
	%This in combination with Lemma~\ref{lem-stability} and Proposition~\ref{prop-weak-convergence-of-empirical-measure} proves the positive result. 
	
	\begin{proof}[Proof of Theorem~\ref{thm-main-ER}-(i) assuming Proposition~\ref{prop-algo-performance}]
		Fix an arbitrary $\varepsilon>0$. Let $\alpha_\varepsilon=(\alpha^{-1}+\varepsilon)^{-1}$ as above, and $\alpha_{2\varepsilon}=(\alpha^{-1}+2\varepsilon)^{-1}$. Pick $\delta=\delta(\alpha_\varepsilon,\alpha_{2\varepsilon})$ as in Lemma~\ref{lem-stability}, and pick $\eta_0=\eta_0(\alpha,\lambda,\varepsilon,\delta)$ as in Proposition~\ref{prop-algo-performance}. For any $\eta<\eta_0$, we have from Proposition~\ref{prop-algo-performance} that with high probability, $U^{\operatorname{cor}}$ satisfies \eqref{eq-near-maximizer}. From Lemma~\ref{lem-stability}, \eqref{eq-near-maximizer} indicates that $|U_{\alpha_{2\varepsilon}}\setminus U^{\operatorname{cor}}|\le \varepsilon n$, where $U_{\alpha_{2\varepsilon}}$ is the set of vertices with balanced load no less than $\alpha_{2\varepsilon}^{-1}=\alpha^{-1}+2\varepsilon$ in $\mathcal H_{\pi^*}$. consequently, $\widetilde{\pi}(i)=\pi^*(i)$ holds for all but at most $\varepsilon n$ many vertices in $U_{\alpha_{2\varepsilon}}$. 
		
		On the other hand, when $\alpha^{-1}+2\varepsilon$ is not an atom of $\mu_\lambda$, Proposition~\ref{prop-weak-convergence-of-empirical-measure} implies that with high probability, $|U_{\alpha_{2\varepsilon}}|\ge (\mu_\lambda\big((\alpha^{-1}+2\varepsilon,\infty)\big)-\varepsilon)n$. Therefore, under the intersection of all these typical events, we have $\widetilde{\pi}(i)=\pi^*(i)$ holds for at least $(\mu_\lambda\big((\alpha^{-1}+2\varepsilon,\infty)\big)-2\varepsilon)n$
		many indices $i\in [n]$. Since $\mu_\lambda$ has at most countably many atomic points, the above holds for all but countably many $\varepsilon>0$ and $\eta$ small enough in terms of $\varepsilon$. Thus, Theorem~\ref{thm-main-ER} -(i) follows by taking appropriate $\varepsilon,\eta>0$ and arbitrarily extending the output $\widetilde{\pi}$ of the iterative matching algorithm to a real matching in $\operatorname{S}_n$. 
	\end{proof}
	
	The remaining of this section serves for the proof of Proposition~\ref{prop-algo-performance}. Towards this end, we will perform several layers of reduction, as divided into subsections below. Section~\ref{subsec-reduce-to-show-wrong-pairs-are-rare} provides the most conceptual part of our reduction: in order to show \eqref{eq-near-maximizer} happens with high probability, it suffices to prove the partial matching $\widetilde\pi$ produces very few mismatches. This might seem surprising at the first glance, as the condition $|U^{\operatorname{cor}}|\approx |U_N|$ does not provide an a-priori nontrivial lower bound on $|U^{\operatorname{cor}}|$. The key here lies in the greedy nature of our algorithm. For example, in the final iteration, the algorithm aims to maximize the number of vertices with load at least $\alpha^{-1}+\varepsilon$ in $\mathcal H_{\pi_N}$. Thus, if $\widetilde\pi$ closely algins with $\pi^*$ on $U_{N-1}$, then $|U_N|$ should be roughly lower-bounded by $|U^{\operatorname{heavy}}|$, as it is possible to take $\pi_N\approx \pi^*$ with $\pi_N=\widetilde\pi$ restricted to $U_{N-1}$. 
	
	In light of the above reduction, it suffices to show that the algorithm produces very few spurious matchings in each step. Sections~\ref{subsec-filtration}, \ref{subsec-orbit-decomposition-conditioning} and \ref{subsec-proof-or-reduction} are dedicating to proof this claim. Due to the iterative nature of our algorithm, it is necessary to handle conditioning throughout the analysis. In Section~\ref{subsec-filtration}, we define a suitable filtration and further reduce the claim to showing that a certain event $\mathcal U$ happens with vanishing conditional probability (see Proposition~\ref{prop-reduced-conditional-probability}). In Section~\ref{subsec-orbit-decomposition-conditioning} and Section~\ref{subsec-proof-or-reduction}, we extend arguments in \cite[Section 2]{DD23b} to a conditional setting to prove the reduced proposition, thereby concluding the proof of positive result.
	
	\subsection{Reduction to few mismatches}\label{subsec-reduce-to-show-wrong-pairs-are-rare}
	
	Our first step is to argue that Proposition~\ref{prop-algo-performance} follows if we can show $\widetilde\pi(i)=\pi^*(i)$ for all but very few indices $i\in U_N$ (see Proposition~\ref{prop-wrong-pairs-are-rare} below for a precise statement). 
	We start with a technical lemma, whose proof we leave into the appendix.
	\begin{lemma}\label{lem-D}
		For any constant $\delta>0$, there exists a constant $D=D(\lambda,\delta)>0$ such the following event $\mathcal D$ happens with high probability:
		\begin{equation}\label{eq-event-D}
			\mathcal D=\{\forall U\subset [n],\#\{\text{edges in }\mathcal H_{\pi^*}\text{ with at least one end point in }U\}\}\le D|U|+\frac{\delta n}{2}\,.
		\end{equation}
	\end{lemma}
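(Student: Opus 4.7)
The intersection graph $\mathcal H_{\pi^*}$ is distributed as $\mathbf G(n,\lambda/n)$, since each pair $(i,j)\in\operatorname{E}$ lies in $\mathcal E_{\pi^*}$ independently with probability $ps^2=\lambda/n$. For any $U\subset[n]$, the number of edges of $\mathcal H_{\pi^*}$ with at least one endpoint in $U$ is bounded by $\sum_{v\in U}\deg(v)$, where $\deg(v)$ denotes the degree of $v$ in $\mathcal H_{\pi^*}$. Hence it suffices to prove, with high probability and uniformly over $U\subset[n]$, that $\sum_{v\in U}\deg(v)\le D|U|+\delta n/2$.

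The plan is a high-degree truncation. For a threshold $T=T(\lambda,\delta)$ to be fixed, write
\[
\sum_{v\in U}\deg(v) \;\le\; T|U|\;+\;X_T,\qquad X_T \;:=\; \sum_{v\,:\,\deg(v)\ge T}\deg(v).
\]
Taking $D=T$ reduces the lemma to showing $X_T\le\delta n/2$ with high probability. Since $\deg(v)\sim\operatorname{Bin}(n-1,\lambda/n)$ has Poisson-like tails uniformly in $n$, we may choose $T$ large enough (in terms of $\lambda,\delta$) that $\mathbb E[\deg(v)\cdot\mathbf 1\{\deg(v)\ge T\}]\le\delta/8$, which by linearity gives $\mathbb E[X_T]\le \delta n/8$.

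For the concentration step, I would compute the variance directly. Setting $Y_v=\deg(v)\mathbf 1\{\deg(v)\ge T\}$, each diagonal term satisfies $\operatorname{Var}(Y_v)=O(1)$. For $v\ne w$, the pair $(\deg(v),\deg(w))$ depends on the status of the single shared potential edge $(v,w)$ and otherwise on disjoint sets of independent Bernoulli edges; conditioning on whether $(v,w)$ is present couples $(\deg(v),\deg(w))$ to a pair of independent $\operatorname{Bin}(n-2,\lambda/n)$ variables plus a common perturbation of size one. A short calculation then gives $\operatorname{Cov}(Y_v,Y_w)=O(1/n)$. Summing over the $O(n^2)$ pairs yields $\operatorname{Var}(X_T)=O(n)$, and Chebyshev's inequality delivers $\mathbb P[X_T\ge\delta n/2]=O(1/n)=o(1)$, completing the argument.

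The main (modest) technical point is the $O(1/n)$ covariance bound: because the indicator $\mathbf 1\{\deg\ge T\}$ introduces a nonlinearity, $Y_v$ and $Y_w$ are not exactly independent even after removing the edge $(v,w)$. The cleanest route is the coupling sketched above—conditioning on the $(v,w)$-edge reduces the joint law to a product of independent binomials on disjoint edge sets, so any residual dependence is mediated by a single Bernoulli$(\lambda/n)$ variable, which produces only an $O(1/n)$ covariance. Everything else is routine, and the choice $D=T$ depends only on $\lambda$ and $\delta$, as required.
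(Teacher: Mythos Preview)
Your argument is correct and takes a genuinely different route from the paper's proof. The paper argues directly by a union bound over $U$: it observes that the number of edges touching $U$ is stochastically dominated by $\operatorname{Bin}(n|U|,\lambda/n)$, then applies Chernoff bounds in two regimes (small $|U|$, where the $\delta n/2$ slack absorbs everything, and large $|U|$, where the $D|U|$ term dominates) and sums over all $\binom{n}{|U|}$ choices of $U$. Your approach instead reduces the \emph{uniform} statement over all $U$ to a \emph{single} random variable $X_T=\sum_{v:\deg(v)\ge T}\deg(v)$, which you then control by a second-moment computation; the covariance estimate $\operatorname{Cov}(Y_v,Y_w)=O(1/n)$ is indeed exact, since conditioning on the single shared edge indicator makes the residual degrees independent, and a short calculation gives $\operatorname{Cov}(Y_v,Y_w)=\tfrac{\lambda}{n}(1-\tfrac{\lambda}{n})(b-a)^2$ with $|b-a|\le T$. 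Your route is arguably cleaner conceptually, as it avoids the case split and the union bound over exponentially many sets; the paper's route, on the other hand, produces explicit constants and does not require the variance calculation. One minor phrasing issue: after conditioning on the $(v,w)$-edge, $Y_v$ and $Y_w$ \emph{are} exactly independent---the dependence is entirely mediated by that shared Bernoulli, as you correctly use in the computation.
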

	
	We claim it suffices to prove the following proposition.
	
	\begin{proposition}\label{prop-wrong-pairs-are-rare}
		With the notations in Proposition~\ref{prop-algo-performance}, let $D=D(\lambda,\delta)$ be defined as in Lemma~\ref{lem-D}, then there exists $\eta_0=\eta_0(\alpha,\lambda,\varepsilon,\delta)>0$ such that for any $\eta<\eta_0$, it holds with high probability that
		\begin{equation}\label{eq-wrong-pairs-are-rare}
			|U_N\setminus U^{\operatorname{cor}}|\le \frac{\delta n}{4D}\,.
		\end{equation}
	\end{proposition}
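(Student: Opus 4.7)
The plan is to control, for each iteration $k \in \{0, 1, \ldots, N\}$ of the algorithm, the number of mismatches $|V_k^{\pi_k} \setminus \{i : \pi_k(i) = \pi^*(i)\}|$ produced at that step, and then sum over $k$. Since $N = O(1/\eta)$ by construction, it suffices to show that for each fixed $k$, with probability $1 - o(1/N)$, step $k$ contributes at most $\delta n / (4D(N+1))$ mismatches. To track the state of the algorithm, I would introduce a filtration $\{\mathcal{F}_k\}$ in which $\mathcal{F}_{k-1}$ captures $\widetilde{\pi}|_{U_{k-1}}$ together with all information in $(G_1,G_2)$ that was used to select $\pi_0,\ldots,\pi_{k-1}$; this is the purpose of Section~\ref{subsec-filtration}.

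Given $\mathcal{F}_{k-1}$, the algorithm picks $\pi_k$ to maximize $|V_k^{\pi_k}|$ over extensions of $\widetilde{\pi}|_{U_{k-1}}$, so controlling mismatches at step $k$ reduces to bounding the conditional probability that \emph{some} such extension $\pi$ has $V_k^\pi$ containing more than $\delta n / (4D(N+1))$ indices $i$ with $\pi(i) \ne \pi^*(i)$. I would formalize this as the intermediate Proposition~\ref{prop-reduced-conditional-probability} and attack it by a truncated first moment / union bound. For each candidate set $W \subseteq [n] \setminus U_{k-1}$ and each injection $\sigma : W \to [n] \setminus \widetilde{\pi}(U_{k-1})$ disagreeing with $\pi^*|_W$ on many vertices, I would upper bound the conditional probability that $W$ realizes $V_k^\pi \setminus U_{k-1}$ for $\pi = \widetilde{\pi} \cup \sigma$. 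The crucial leverage is that membership of every $i \in W$ in the narrow interval $I_k$ of width $\eta$ forces all induced subgraphs of $\mathcal{H}_\pi(W \cup U_{k-1})$ to have edge-to-vertex ratio within $O(\eta)$ of a common value, playing the role of the well-balanced hypothesis in \cite{DD23b}.

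To make the per-candidate estimate quantitative, I would perform an orbit decomposition of $\tau = \pi^* \circ \sigma^{-1}$ on $W$, analogous to Section~\ref{subsec-orbit-decomposition} but adapted to the conditional setting (the role of Section~\ref{subsec-orbit-decomposition-conditioning}). Because $\sigma$ disagrees with $\pi^*$ on a positive fraction of $W$, many vertices of $W$ lie in non-trivial $\tau$-orbits, and for edges supported on such orbits the refined analogue of Lemma~\ref{lem-tail-crude} yields an $\exp(-\Theta(\cdot \log n))$ cost per edge. Combined with the balanced edge-to-vertex ratio forced by $I_k$, each summand in the union bound decays like $\exp(-\Theta(|W| \log n))$, which defeats the entropy factor $\binom{n}{|W|} \cdot n^{|W|}$ once $\eta$ is chosen small enough in terms of $\alpha,\lambda,\varepsilon,\delta$.

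The main obstacle will be handling the conditioning on $\mathcal{F}_{k-1}$: after the algorithm has selected $\pi_0,\ldots,\pi_{k-1}$, the posterior of the remaining randomness in $(G_1,G_2)$ is intricate and not directly amenable to the first-moment calculation above. Following the strategy indicated in the introduction, I would exploit the monotonicity of the balanced load (Lemma~\ref{lem-load-monotonicity}) together with the FKG inequality. After relaxing $\{\partial\theta_\pi(i) \in I_k\}$ to the monotone event $\{\partial\theta_\pi(i) \ge \alpha^{-1} + \varepsilon + (N-k)\eta\}$ on $W$, the bad event becomes increasing in the edge set of $\mathcal{H}_{\pi^*}$ and in the underlying Bernoullis defining $(G_1,G_2)$. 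FKG then permits discarding the most onerous pieces of the $\mathcal{F}_{k-1}$-conditioning in the upper-bound direction, so the conditional probability reduces to a clean unconditional one under $\mathcal{P}_{\pi^*}$ to which the orbit-decomposition argument applies essentially as in \cite{DD23b}. Summing the per-step estimates over $k = 0,1,\ldots,N$ and intersecting with the high-probability event $\mathcal{D}$ from Lemma~\ref{lem-D} yields \eqref{eq-wrong-pairs-are-rare}.
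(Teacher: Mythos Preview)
Your overall strategy---filtration $\{\mathcal F_k\}$, FKG to drop conditioning, orbit decomposition under conditioning, truncated first moment exploiting the well-balanced constraint coming from the narrow interval $I_k$---is exactly what the paper does (Sections~\ref{subsec-filtration}--\ref{subsec-proof-or-reduction}). The gap is in the specific reduction you propose at the outset.

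You write that it suffices to show each step $k$ contributes at most $\delta n/(4D(N+1))$ mismatches in absolute terms. This is where the argument would break. The truncated first moment bound (the analogue of Lemma~\ref{lem-algebra}) can only certify that the \emph{fraction} of mismatches in $V_k^{\pi_k}$ is at most some fixed constant $\zeta/2$, and only when $|V_k^{\pi_k}|$ is bounded below by a positive multiple of $n$ (otherwise the entropy factor $\binom{n}{|W|}\cdot n^{|W|}$ dominates). If some round has $|V_k^{\pi_k}|$ of order $n$, a $\zeta/2$ fraction already yields $\Theta(\zeta n)$ mismatches, far exceeding your per-step target $\zeta n/(N+1)$ once $N\ge 2$. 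Pushing the fraction bound down to $O(\zeta/N)$ would force $\eta$ below a threshold that itself shrinks like $1/N$; since $N=\Theta(\eta^{-2})$ (not $O(\eta^{-1})$ as you wrote), this is circular.

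The paper's fix is a negligible/non-negligible dichotomy (Proposition~\ref{prop-each-step-is-good}). Call round $k$ negligible if $|V_k^{\pi_k}|\le N^{-1}\eta n$. Negligible rounds contribute at most $N\cdot N^{-1}\eta n=\eta n$ mismatches in total, trivially. For each non-negligible round one proves only the fraction bound $\zeta/2$; since the sets $V_k^{\pi_k}$ are pairwise disjoint and sum to at most $n$, the total mismatches from non-negligible rounds are at most $(\zeta/2)n$. Adding the $\le\eta n$ from $U_0$ (Lemma~\ref{lem-max-E-pi}) and taking $\eta<\zeta/4$ gives $2\eta n+(\zeta/2)n<\zeta n=\delta n/(4D)$. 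With this adjustment your plan is correct and coincides with the paper's.
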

	\begin{proof}[Proof of Proposition~\ref{prop-algo-performance} assuming Proposition~\ref{prop-wrong-pairs-are-rare}]
		We claim that \eqref{eq-near-maximizer} holds under $\mathcal D\cap \{\eqref{eq-wrong-pairs-are-rare}\text{ holds}\}$. From Lemma~\ref{lem-D} and Proposition~\ref{prop-wrong-pairs-are-rare}, clearly Proposition~\ref{prop-algo-performance} follows from this claim.
		
		To verify the claim, we argue by contradiction. Assuming that both $\mathcal D$ and \eqref{eq-wrong-pairs-are-rare} hold, while \eqref{eq-near-maximizer} fails. Recalling  $\alpha_\varepsilon=(\alpha^{-1}+\varepsilon)^{-1}$, by our choice of $U_N$ and $\widetilde\pi:U_N\to [n]$, $U_N$ is the maximizer of $f_{\alpha_\varepsilon}^\pi(U)=\alpha_\varepsilon|U|-|\mathcal E_\pi(U)|$ for any matching $\pi$ such that $\pi\mid_{U_N}=\widetilde{\pi}$. Define
		\[
		U^{\operatorname{miss}}=\{v\in [n]: v\in U^{\operatorname{heavy}},v\notin U_N,\pi^*(v)\notin \widetilde\pi(U_N)\}\,.
		\]
		We will argue that for any matching $\pi\in \operatorname{S}_n$ with $\pi\mid_{U_N}=\widetilde\pi$ and $\pi\mid_{U^{\operatorname{miss}}}=\pi^*$, 
		\[
		f_{\alpha_\varepsilon}^\pi(U^{\operatorname{miss}}\cup U_N)>f_{\alpha_\varepsilon}^{\pi}(U_N)\,,
		\]
		leading to a contradiction.

		Since \eqref{eq-near-maximizer} is not true, we have for $U^{\operatorname{cor}}=\{i\in U_N:\widetilde\pi(i)=\pi^*(i)\}$ and $U^{\operatorname{heavy}}=\{i\in [n]:\partial\theta_{\pi^*}(i)\ge \alpha^{-1}+\varepsilon\}$, it holds
		\begin{align*}
			f_{\alpha_\varepsilon}^{\pi^*}(U^{\operatorname{cor}}\cup U^{\operatorname{heavy}})&=\alpha_\varepsilon|\mathcal E_{\pi^*}(U^{\operatorname{cor}}\cup U^{\operatorname{heavy}})|-|U^{\operatorname{cor}}\cup U^{\operatorname{heavy}}|\\\ge f_{\alpha_\varepsilon}^{\pi^*}(U^{\operatorname{cor}})+\delta n&=\alpha_\varepsilon|\mathcal E_\pi(U^{\operatorname{cor}})|-|U^{\operatorname{cor}}|+\delta n\,,
		\end{align*}
		or equivalently,
		\[
		\alpha_\varepsilon(|\mathcal E_{\pi^*}(U^{\operatorname{cor}}\cup U^{\operatorname{heavy}})|-|\mathcal E_{\pi^*}( U^{\operatorname{cor}})|)\ge |U^{\operatorname{heavy}}\setminus U^{\operatorname{cor}}|+\delta n\,.
		\]
		We note $U^{\operatorname{heavy}}\setminus U^{\operatorname{cor}}$ is the set of vertices $i\in U^{\operatorname{heavy}}$ that are not correctly matched by $\widetilde\pi$, and there are three types of these vertices: (i) neither $i$ is in $U_N$ nor $\pi^*(i)$ is matched by some vertex in $U_N$ by $\widetilde\pi$; (ii) $i\in U_N$ but $\widetilde\pi(i)\neq \pi^*(i)$; (iii) $\pi^*(i)=\widetilde\pi(j)$ for some $i\neq j\in U_N$. This yields the relation
		\[
		U^{\operatorname{miss}}\subset U^{\operatorname{heavy}}\setminus U^{\operatorname{cor}}\subset U^{\operatorname{miss}}\cup (U_N\setminus U^{\operatorname{cor}})\cup ((\pi^*)^{-1}\circ\widetilde\pi(U_N)\setminus U^{\operatorname{cor}})\,.
		\]
		Thus, we have $|U^{\operatorname{heavy}}\setminus U^{\operatorname{cor}}|\ge |U^{\operatorname{miss}}|$ and by \eqref{eq-wrong-pairs-are-rare},
		\[
		|(U^{\operatorname{heavy}}\setminus U^{\operatorname{cor}})\setminus U^{\operatorname{miss}}|=|U^{\operatorname{heavy}}\setminus(U^{\operatorname{miss}}\cup U^{\operatorname{cor}})|\le \frac{\delta n}{2D}\,.
		\]
		Meanwhile, it is clear that any edge in $\mathcal E_{\pi^*}(U^{\operatorname{heavy}} \cup U^{\operatorname{cor}})\setminus \mathcal E_{\pi^*}(U^{\operatorname{miss}}\cup U^{\operatorname{cor}})$ has at least one endpoint in $U^{\operatorname{heavy}}\setminus (U^{\operatorname{miss}}\cup U^{\operatorname{cor}})$, so under the event $\mathcal D$ we have
		\begin{align*}
			&\ |\mathcal E_{\pi^*}(U^{\operatorname{heavy}} \cup U^{\operatorname{cor}})|- |\mathcal E_{\pi^*}(U^{\operatorname{miss}}\cup U^{\operatorname{cor}})|\\
			\le&\  |\mathcal E_{\pi^*}(U^{\operatorname{heavy}} \cup U^{\operatorname{cor}})\setminus \mathcal E_{\pi^*}(U^{\operatorname{miss}}\cup U^{\operatorname{cor}})|\\
			\le &\ D|U^{\operatorname{heavy}}\setminus (U^{\operatorname{miss}}\cup U^{\operatorname{cor}})|+\frac{\delta n}{2}
			\le \delta n\,.
		\end{align*}
		Altogether we conclude
		\begin{align*}
			&\ \alpha_\varepsilon(|\mathcal E_{\pi^*}(U^{\operatorname{miss}}\cup U^{\operatorname{cor}})|-|\mathcal E_{\pi^*}(U^{\operatorname{cor}})|)\\>&\  \alpha_\varepsilon(|\mathcal E_{\pi^*}(U^{\operatorname{heavy}}\cup U^{\operatorname{cor}})|-|\mathcal E_{\pi^*}(U^{\operatorname{cor}})|)-\alpha_\varepsilon\cdot \delta n\\
			\ge&\ |U^{\operatorname{heavy}}\setminus U^{\operatorname{cor}}|+\delta n- \delta n
			\ge  |U^{\operatorname{miss}}|\,.
		\end{align*}
		This can be rewritten as (noticing that $U^{\operatorname{miss}}\cap U^{\operatorname{cor}}=\emptyset$ and thus $|U^{\operatorname{miss}}|=|U^{\operatorname{miss}}\cup U^{\operatorname{cor}}|-|U^{\operatorname{cor}}|$)
		\[
		f_{\alpha_\varepsilon}^{\pi^*}(U^{\operatorname{miss}}\cup U^{\operatorname{cor}})>f_{\alpha_\varepsilon}^{\pi^*}(U^{\operatorname{cor}})\,.
		\]
		Now for any matching $\pi\in \operatorname{S}_n$ with $\pi\mid_{U_N}=\widetilde\pi$ and $\pi\mid_{U^{\operatorname{miss}}}=\pi^*$, because $\pi=\pi^*$ on the set $U^{\operatorname{miss}}\cup U^{\operatorname{cor}}$, we also have
		\[
		f_{\alpha_\varepsilon}^{\pi}(U^{\operatorname{miss}}\cup U^{\operatorname{cor}})>f_{\alpha_\varepsilon}^{\pi}(U^{\operatorname{cor}})\,.
		\]
		But $U^{\operatorname{cor}}\subset U_N$, an easy algebraic manipulation yields the desired contradiction
		\[
		f_{\alpha_\varepsilon}^\pi(U^{\operatorname{miss}}\cup U_N)>f_{\alpha_\varepsilon}^{\pi}(U_N)\,.
		\]
		This verifies the claim and thus proves Proposition~\ref{prop-algo-performance}.
	\end{proof}

	\subsection{The conditioning scheme}\label{subsec-filtration}
	
	We now proceed to prove Proposition~\ref{prop-wrong-pairs-are-rare}. For simplicity, we write $\zeta=\delta/(4D)$. Fix a small $\eta>0$. Recall that $N=\lceil( C/\eta-\alpha^{-1}-\varepsilon)/\eta\rceil$, and there are $N+1$ rounds of iteration in the algorithm. First, Lemma~\ref{lem-max-E-pi} implies that very few vertices are matched in Round-$0$ (at most $\eta n$ of them), so our main focus will be on Rounds $1$ through $N$. Our strategy is to show that for each $1\le k\le N$, in Round-$k$, either $|U_k\setminus U_{k-1}|$ is small (i.e., the algorithm performs very few matchings), or the fraction of mismatched vertices $\#\{i\in U_k\setminus U_{k-1}:\widetilde{\pi}(i)\neq \pi^*(i)\}/|U_k\setminus U_{k-1}|$ is small (i.e., the mismatches account for only a small portion). More precisely, for $1\le k\le N$, we call Round-$k$ negligible, if $|U_k\setminus U_{k-1}|\le N^{-1}\eta n$, and non-negligible otherwise. We will prove the following crucial proposition which implies Proposition~\ref{prop-wrong-pairs-are-rare}.
	
	\begin{proposition}\label{prop-each-step-is-good}
		There exists $\eta_1=\eta_1(\alpha,\lambda,\varepsilon,\delta)$, such that for any $\eta<\eta_1$ and $1\le k\le N$,
		\begin{equation}\label{eq-each-step-is-nice}
			\mathcal P^*\left[\mathbf{1}(\text{Round-$k$ is non-negligible})\cdot \frac{\#\{i\in U_k\setminus U_{k-1}:\widetilde{\pi}(i)\neq \pi^*(i)\}}{|U_k\setminus U_{k-1}|}\le \frac{\zeta}{2}\right]=1-o(1)\,.
		\end{equation}
	\end{proposition}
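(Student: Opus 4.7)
The plan is to prove Proposition~\ref{prop-each-step-is-good} by a conditional union bound for each $k\in\{1,\dots,N\}$, extending the truncated first moment analysis of \cite[Section 2]{DD23b} to the iterative, conditional setting created by the algorithm. Fix $k$. The negation of the event in \eqref{eq-each-step-is-nice} occurs precisely when some pair $(\mathtt{U},\sigma)$ realizes $(V_k^{\pi_k},\pi_k|_{V_k^{\pi_k}})$: that is, $\mathtt{U}\subseteq [n]\setminus U_{k-1}$, $|\mathtt{U}|\ge N^{-1}\eta n$, and $\sigma$ is an injection from $\mathtt{U}$ into $[n]\setminus \widetilde{\pi}(U_{k-1})$ extending $\widetilde{\pi}|_{U_{k-1}}$, such that (i) $|\{i\in \mathtt{U}:\sigma(i)\ne \pi^*(i)\}|>\zeta|\mathtt{U}|/2$ and (ii) some full matching $\pi$ extending $\widetilde{\pi}|_{U_{k-1}}\cup\sigma$ has $V_k^\pi=\mathtt{U}$. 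Condition (ii), via Lemma~\ref{lem-load-variational-characterization} applied to the cumulative load profile through round $k$, forces a \emph{well-balanced} constraint on the induced subgraph $\mathcal{H}_\sigma(\mathtt{U})$: the overall edge-to-vertex ratio on $\mathtt{U}$ is at least roughly $A/\eta-k\eta$, while on every $W\subseteq \mathtt{U}$ it is at most roughly $A/\eta-(k-1)\eta$. This is the analog of the crucial well-balanced constraint in \cite{DD23b}, and will rule out the usual rare dense-subgraph contributions to the first moment.

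The first step is to remove the history conditioning. I would introduce a filtration $\mathcal{F}_{k-1}$ recording $(U_0,\dots,U_{k-1},\widetilde{\pi}|_{U_{k-1}})$ together with the edge indicators consulted in rounds $0$ through $k-1$, and bound the conditional probability of the bad event given $\mathcal{F}_{k-1}$. The event underlying condition (ii), viewed as a function of the remaining edge indicators, is increasing by Lemma~\ref{lem-load-monotonicity}; I would then apply the FKG inequality to relax the $\mathcal{F}_{k-1}$-conditioning in the favorable direction, upper-bounding the conditional probability by an unconditional one under $\mathcal{P}_{\pi^*}$ up to a benign prefactor, in the spirit of the technique used in \cite{DDG24}.

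The second step is the large-deviation estimate for this unconditional probability. Following Section~\ref{subsec-orbit-decomposition}, I would decompose $\mathcal{E}_\sigma(\mathtt{U})$ along the orbits of $\pi^*\circ\sigma^{-1}$ acting on $\operatorname{E}(\mathtt{U})$. Fixed-point pairs contribute edges already in $\mathcal{H}_{\pi^*}(\mathtt{U})$, hence are controlled by the local density of $\mathcal{H}_{\pi^*}$; each edge in a nontrivial orbit carries a sharp multiplicative cost $n^{-\gamma+o(1)}$ with $\gamma=\alpha\wedge 1/2$, via a refinement of Lemma~\ref{lem-tail-crude}. Because $\sigma$ disagrees with $\pi^*$ on at least $\zeta|\mathtt{U}|/2$ vertices, $\pi^*\circ\sigma^{-1}$ moves $\Omega(\zeta|\mathtt{U}|)$ points, so a linear-in-$|\mathtt{U}|$ amount of edge mass in $\mathcal{E}_\sigma(\mathtt{U})$ must come from nontrivial orbits; combining with the well-balanced upper bounds (which forbid this mass from concentrating on a tiny subset) yields a per-pair bound of the form $\exp(-c(\zeta,\eta)|\mathtt{U}|\log n)$. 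A union bound over the $\exp(O(|\mathtt{U}|\log n))$ choices of $(\mathtt{U},\sigma)$ then gives $o(1)$, provided $\eta$ is small enough that $c(\zeta,\eta)$ dominates the entropy factor.

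I expect the main obstacle to be the monotone-decoupling step. Unlike \cite{DD23b}, where the first moment is computed in one shot under the plain $\mathcal{P}^*$, here $\mathcal{F}_{k-1}$ is not a product sigma-algebra and encodes global features of the previously chosen $\pi_j$'s; identifying which edge indicators are genuinely free at round $k$ and arranging for all pieces of the $\mathcal{F}_{k-1}$-conditioning to be simultaneously relaxed in the same favorable direction, with no worse than $\exp(o(|\mathtt{U}|\log n))$ loss, requires careful bookkeeping. Once this decoupling is in hand, the orbit-decomposition computation becomes a conditional adaptation of arguments already present in \cite[Section 2]{DD23b}, and the proposition follows.
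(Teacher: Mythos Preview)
Your overall plan---condition on the history, extract a well-balanced constraint, decompose along orbits of $\pi^*\circ\sigma^{-1}$, and close with a union bound---matches the paper's strategy. But two concrete steps in your sketch would not go through as written.

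First, the FKG step. You assert that ``the event underlying condition (ii)\ldots is increasing by Lemma~\ref{lem-load-monotonicity}.'' It is not: the event that $V_k^\pi=\mathtt U$ forces both a lower density bound (increasing in the edge indicators) and an upper density bound on every $W\subset\mathtt U$ (decreasing). The paper does \emph{not} apply FKG to this non-monotone event. Instead it first rewrites the $\mathcal F_{k-1}$-conditioning as $\mathcal P_{\pi^*,\mathtt E_{k-1}}[\,\cdot\mid\mathcal B\cap\mathcal E_2]$, where $\mathcal P_{\pi^*,\mathtt E_{k-1}}$ is still a product measure and $\mathcal B\cap\mathcal E_2$ is decreasing. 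It then uses the upper-density constraint (iii) \emph{deterministically} (Lemma~\ref{lem-constraints-on-Ek}) to restrict the integer tuple $(E_s,E_c,E_1,\dots,E_L,E_{>L})$ to a polytope $\Delta(n_1,\dots,n_L)$, sums over lattice points $\mathbf x\in\Delta$, and only at that stage applies FKG to the genuinely increasing tail events $\{E_\bullet\ge x_\bullet\}$ to drop the $\mathcal B\cap\mathcal E_2$-conditioning. The placement of FKG after the decomposition, not before, is essential.

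Second, the orbit decomposition. You work on $\operatorname E(\mathtt U)$ with $\mathtt U=U_k\setminus U_{k-1}$, but the edge count that must exceed $l_k|\mathtt U|$ is $|\mathcal E_{\check\pi}(U_k)\setminus\mathcal E_{\check\pi}(U_{k-1})|$, which includes all edges with at least one endpoint in $\mathtt U$---in particular the cross-edges into $U_{k-1}$. Restricting the orbits of $\Sigma$ to $\operatorname E(U_k)\setminus\operatorname E(U_{k-1})$ therefore breaks some cycles into \emph{chains}, and these chains come in two flavors (free and confined, the latter touching an edge of $\mathtt E_{k-1}$). The confined chains are handled by a new tail bound (Lemma~\ref{lem-tail-prob-conditioning}, equation~\eqref{eq-tail-Ec}) that requires the ``good realization'' hypothesis $|\mathtt E_{k-1}|\le An$; this is where Lemma~\ref{lem-max-E-pi} enters. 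Your sketch, based only on cycles, would miss this contribution. Once these two points are fixed, the rest of your outline---the length-specific rates $\alpha_t=(t-1)/t$, the enumeration bound of Lemma~\ref{lem-enumeration}, and the algebra of Lemma~\ref{lem-algebra} under the constraint $n_1\le(1-\zeta/2)T$---does reduce to a conditional adaptation of \cite[Section~2]{DD23b} as you anticipate.
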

	
	%We show that Proposition~\ref{prop-wrong-pairs-are-rare} follows readily from Proposition~\ref{prop-each-step-is-good}.
	\begin{proof}[Proof of Proposition~\ref{prop-wrong-pairs-are-rare} assuming Proposition~\ref{prop-each-step-is-good}]
		Choose $\eta_0=\eta_0(\alpha,\lambda,\varepsilon,\delta)$ such that $\eta_0\le \zeta/4$ and $\eta_0<\eta_1(\alpha,\lambda,\varepsilon,\delta)$ as in Proposition~\ref{prop-each-step-is-good}. Consider the output $(U_N,\widetilde\pi)$ for the iterative matching algorithm with $\eta<\eta_0$. It is clear that the number of mismatched vertices in $U_N$ produced by negligible rounds is at most $N\cdot N^{-1}\eta n=\eta n$. Additionally, by Lemma~\ref{lem-max-E-pi}, with high probability $|U_0|\le \eta n$. By Proposition~\ref{prop-each-step-is-good} together with a union bound (since $N=O(1)$), with high probability, the mismatched vertices in any non-negligible round occupy at most a $\zeta/2$ fraction. Combining these results, we see that with high probability for $\eta<\eta_0$, the number of mismatched vertices in $U_N$ is at most $\eta n+\eta n+\frac{\zeta}{2}n<\zeta n$, as desired.
	\end{proof}
	
	Now we focus on proving Proposition~\ref{prop-each-step-is-good}. To analyze the probability in \eqref{eq-each-step-is-nice}, we need to introduce an appropriate filtration $\{\mathcal F_{k}\}_{1\le k\le N}$ and work with the conditional probability under typical realizations of $\mathcal F_{k},1\le k\le N$.  We define $\{\mathcal F_{k}\}_{1\le k\le N}$ as follows: for each $1\le k\le N$, $\mathcal F_k$ is the $\sigma$-field generated by the information of the true matching $\pi^*$, the sets $U_0\subset \dots \subset U_{k-1}$ identified by the algorithm in Round-$0$ to Round-$(k-1)$, the partial matching $\widetilde{\pi}$ on $U_{k-1}$, and the graph $\mathcal H_{\widetilde{\pi}}(U_{k-1})$ (recall that this is the intersection graph of $G_1,G_2$ through $\pi_{k-1}$ on $U_{k-1}$). We will argue that for $1\le k\le N$, under typical realizations of $\mathcal F_{k-1}$, the event in \eqref{eq-each-step-is-nice} holds with conditional probability $1-o(1)$. Given this, \eqref{eq-each-step-is-nice} follows by the iterative expectation theorem. 
	
To analyze the conditioning on $ \mathcal{F}_{t-1} $, we need to examine more carefully what this conditioning entails. In what follows, we will characterize this conditioning in terms of explicit events. The key observation is that these events are decreasing (in an appropriate sense). Using this fact, we can strategically apply the FKG inequality to eliminate the conditioning.

Now fix $ 1 \le k \le N $ together with a realization of $ \mathcal{F}_{k-1} $. Let $ \pi^* $ be the true matching, and let $ \mathtt{U}_0, \dots, \mathtt{U}_{k-1}, \tilde{\pi} $ be the realizations of $ U_0, \dots, U_{k-1}, \widetilde{\pi} $ identified by the algorithm. Also, let $ \mathtt{H}_{k-1} = (\mathtt{U}_{k-1}, \mathtt{E}_{k-1}) $ be the realization of $ \mathcal{H}_{\widetilde{\pi}}(U_{k-1}) = (U_{k-1}, \mathcal{E}_{\widetilde{\pi}}(U_{k-1})) $. We say that a realization of $ \mathcal{F}_{k-1} $ is \emph{good} if $ |\mathtt{E}_{k-1}| \le An $ (where $ A $ is defined as in Lemma~\ref{lem-max-E-pi}). By Lemma~\ref{lem-max-E-pi}, it follows immediately that a realization of $ \mathcal{F}_{k-1} $ is good with high probability. Therefore, to prove \eqref{eq-each-step-is-nice}, it suffices to show the same result holds when conditioning on any good realization (see the end of this subsection for deteailed arguments).

	Henceforth we assume the realization is good. Recalling the notation $\mathcal P_{\pi^*}[\cdot]=\mathcal P^*[\cdot\mid \pi^*]$, we have by definition that
	\[
	\mathcal P^*[\cdot\mid \mathcal F_{k-1}]=\mathcal P_{\pi^*}[\cdot\mid U_0=\mathtt{U}_0,\dots,U_{k-1}=\mathtt{U}_{k-1},\widetilde\pi=\tilde\pi,\mathcal H_{\tilde\pi}(\mathtt{U}_{k-1})=\mathtt{H}_{k-1}]\,.
	\]
	We first address the conditions $U_0=\mathtt{U}_0,\dots,U_{k-1}=\mathtt{U}_{k-1}$ and $\widetilde\pi=\tilde\pi$. By the rule of our algorithm, for any $0\le t\le k-1$, $U_t$ is the set of vertices with loads at least $A/\eta-t\eta$ in $\mathcal H_{\pi_t}$. In particular, the load function $\partial\theta_{\widetilde\pi}$ of the induced subgraph $\mathcal H_{\widetilde\pi}(U_t)$ is the same as $\partial\theta_{\pi_t}$ restricting on $U_t$, since in $\mathcal H_{\pi_t}$ no vertex in $U_t$ receives load from outside due to the balanced condition \eqref{eq-balanced-load}. Therefore, for any $\mathtt{U}_0,\dots,\mathtt{U}_{k-1}$ serving as realizations of $U_0,\dots,U_{k-1}$, they satisfy
	\begin{equation}\label{eq-loads-in-Ut-are-large}
		\partial\theta_{\widetilde\pi}(i)\ge A/\eta-t\eta\,,\ \forall\ 0\le t\le k-1,i\in \mathtt{U}_t\,.
	\end{equation}
	Additionally, since $U_t$ maximizes the size of $V_{\pi_t}^t$ over any matching $\pi_t$ satisfying $\pi_t\mid_{U_{t-1}}=\widetilde\pi$ (and further minimize $V_{\pi_t}^t$ under $\prec$ when there is a tie), we conclude that for $U_0=\mathtt{U}_0,\dots, U_{k-1}=\mathtt{U}_{k-1}$ to occur, the following event must hold:
	\begin{equation*}\label{eq-no-larger-Ut}
		\begin{aligned}
			\mathcal B=&\ \mathcal B(\mathtt{U}_0,\dots,\mathtt{U}_{k-1},\tilde\pi)\\
			\equiv&\ \bigcap_{0\le t\le k-1}\{\not\exists\ \pi_t'\in \operatorname{S}_n \text{ s.t. }\pi_t'\mid_{U_{t-1}}=\widetilde\pi,V^{\pi_t'}_t\prec \mathtt{V}_t,|V_t^{\pi_t'}|\ge  |\mathtt{V}_t|\text{ or }V_t^{\pi_t'}\succ \mathtt{V}_t,|V_{t}^{\pi_t'}|>|\mathtt{V}_t|\}\,.
		\end{aligned}
	\end{equation*}
	Here, $\mathtt{V}^t\equiv \mathtt{U}^t\setminus \mathtt{U}^{t-1},0\le t\le k-1$. 
    
Conversely, given that \eqref{eq-loads-in-Ut-are-large} holds and $ \mathcal{B}(\mathtt{U}_0, \dots, \mathtt{U}_{k-1}, \tilde{\pi}) $ is true, a simple induction argument shows that the algorithm indeed identifies $ U_0 = \mathtt{U}_0, \dots, U_{k-1} = \mathtt{U}_{k-1} $ and $ \widetilde{\pi} = \tilde{\pi} $ before Round-$ k $. Therefore, conditioning on $ U_0 = \mathtt{U}_0, \dots, U_{k-1} = \mathtt{U}_{k-1} $ and $ \widetilde{\pi} = \tilde{\pi} $ is equivalent to conditioning on \eqref{eq-loads-in-Ut-are-large} and $ \mathcal{B}(\mathtt{U}_0, \dots, \mathtt{U}_{k-1}, \tilde{\pi}) $. Moreover, note that \eqref{eq-loads-in-Ut-are-large} follows from $ \{\mathcal{H}_{\tilde{\pi}}(\mathtt{U}_{k-1}) = \mathtt{H}_{k-1}\} $, provided that $ \mathtt{H}_{k-1} $ is a valid realization of $ \mathcal{H}_{\widetilde{\pi}}(U_{k-1}) $. As a result, we obtain:
\begin{equation}\label{eq-F_k-1}
\mathcal{P}^*[\cdot \mid \mathcal{F}_{k-1}] = \mathcal{P}_{\pi^*}[\cdot \mid \mathcal{B}, \mathcal{H}_{\tilde{\pi}}(\mathtt{U}_{k-1}) = \mathtt{H}_{k-1}].
\end{equation}

	Next we turn to the event $\{\mathcal H_{\tilde\pi}(\mathtt{U}_{k-1})=\mathtt{H}_{k-1}\}$. Recall the Bernoulli indicators $I_{i,j},J^1_{i,j},J^2_{i,j}$ for $(i,j)\in \operatorname{E}$ in the definition of $\mathcal P^*$ on Page \pageref{eq-IJJ}. Specifically, the edge sets of $G_1$ and $G_2$ are given by that $$(i,j)\in E(G_1)\iff I_{i,j}J_{i,j}^1=1,\quad (i,j)\in E(G_2)\iff I_{(\pi^*)^{-1}(i),(\pi^*)^{-1}(j)}J_{i,j}^2=1\,.$$
	Therefore, for any graph $\mathtt{H}_{k-1}$ on $\mathtt{U}_{k-1}$ with edge set $\mathtt{E}_{k-1}$, letting $\sigma=(\pi^*)^{-1}\circ\widetilde\pi:\mathtt{U}_{k-1}\to [n]$, we have $\{\mathcal H_{\widetilde\pi}(U_{k-1})=\mathtt{H}_{k-1}\}$ is equivalent to $\mathcal E_1\cap \mathcal E_2$, where
	\begin{align*}
		\mathcal E_1=\mathcal E_1(\mathtt{E}_{k-1})&\equiv \{I_{i,j}J_{i,j}^1I_{\sigma(i),\sigma(j)}J^2_{\sigma(i),\sigma(j)}=1,\forall (i,j)\in E_0\}\\&=\{I_{i,j}=J^1_{i,j}=I_{\sigma(i),\sigma(j)}=J^2_{\sigma(i),\sigma(j)}=1,\forall (i,j)\in E_0\}\,,
	\end{align*}
	and
	\[
	\mathcal E_2=\mathcal E_2(\mathtt{U}_{k-1},\mathtt{E}_{k-1})= \{I_{i,j}J_{i,j}^1I_{\sigma(i),\sigma(j)}J^2_{\widetilde{\pi}(i),\widetilde{\pi}(j)}=0,\forall (i,j)\in \operatorname{E}(\mathtt{U}_{k-1})\setminus \mathtt{E}_{k-1}\}\,.
	\]
	Thus, denoting $\mathcal P_{\pi^*,\mathtt{E}_{k-1}}[\cdot]=\mathcal P_{\pi^*}[\cdot\mid \mathcal E_1]$, we can rewrite \eqref{eq-F_k-1} as
	\begin{align}
		\mathcal P^*[\cdot\mid \mathcal F_{k-1}]=\mathcal P_{\pi^*}[\cdot\mid \mathcal B, \mathcal E_1, \mathcal E_2]=\mathcal P_{\pi^*,\mathtt{E}_{k-1}}[\cdot\mid \mathcal B\cap \mathcal E_2]\,.\label{eq-conditioning-description}
	\end{align}
	
	The advantage of expressing $\mathcal P^*[\cdot\mid \mathcal F_{k-1}]$ in this form is that if we
	view $\mathcal P_{\pi^*}$ as a product measure over the indicators $I_{i,j},J_{i,j}^1,J_{i,j}^2$, then $\mathcal P_{\pi^*,\mathtt{E}_{k-1}}$ remains a product measure on a reduced sample space (i.e., the space of indicators not involved in $\mathcal E_1$). Moreover, both the events $\mathcal B$ and $\mathcal E_1$ are decreasing with respect to these indicators (for $\mathcal B$ this follows from Lemma~\ref{lem-load-monotonicity}, and for $\mathcal E_1$ this is obvious). Hence from the FKG inequality, when we aim to upper-bound $\mathcal P^*[\mathcal I\mid \mathcal F_{k-1}]$ for some increasing event $\mathcal I$, we can omit the conditioning on $\mathcal B\cap \mathcal E_1$, resulting in significant simplifications.
	
	We end this subsection with yet another layer of reduction. In order to upper bound the conditional probability of the event in \eqref{eq-each-step-is-nice}, we consider the following relaxation. Denote by $[l_k,u_k)$ the interval $I_k$ as defined in the algorithm, then $0<u_k-l_k\le \eta$. Define $\mathcal U=\mathcal U(\mathtt{U}_{k-1},\tilde\pi)$ as the event that there exists $U\supset \mathtt{U}_{k-1}$ and a partial matching $\check\pi$ on $U$ extending $\tilde\pi$ with the following three properties:
	\begin{itemize}
		\item [(i)] $|U\setminus \mathtt{U}_{k-1}|\ge N^{-1}\eta n$, and $\#\{i\in U\setminus \mathtt{U}_{k-1}:\check\pi(i)\neq \pi^*(i)\}\ge \frac\zeta 2\cdot |U\setminus \mathtt{U}_{k-1}|$. 
		\item [(ii)] $|\mathcal E_{\check\pi}(U)\setminus \mathcal E_{\check\pi}(\mathtt{U}_{k-1})|\ge l_k|U\setminus \mathtt{U}_{k-1}|$;
		\item [(iii)] For any $W$ such that $\mathtt{U}_{k-1}\subset W\subset U$, $|\mathcal E_{\check\pi}(W)\setminus \mathcal E_{\check\pi}(\mathtt{U}_{k-1})|\le u_k|W\setminus \mathtt{U}_{k-1}|$.
	\end{itemize}
	In light of the following reduction arguments, we may shift our attention to show that $\mathcal P^*[\mathcal U\mid \mathcal F_{k-1}]=o(1)$ for good realizations (but be aware that $\mathcal U$ is not an increasing event).
	
	\begin{proposition}\label{prop-reduced-conditional-probability}
		There exists $\eta_1=\eta_1(\alpha,\lambda,\varepsilon)>0$ such that the following is true whenever $\eta<\eta_1$: For any $1\le k\le N$ and any good realization of $\mathcal F_{k-1}$, let $\mathcal U$ be defined as above, then $$\mathcal P^*[\mathcal U\mid \mathcal F_{k-1}]=\mathcal P_{\pi^*,\mathtt{E}_{k-1}}[\mathcal U\mid \mathcal B\cap \mathcal E_2]=o(1)\,.$$	
	\end{proposition}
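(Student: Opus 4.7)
The strategy is a truncated first moment over candidate extensions $(U, \check\pi)$, following the template of \cite[Section 2]{DD23b} adapted to the conditional measure $\mathcal P_{\pi^*, \mathtt{E}_{k-1}}[\,\cdot \mid \mathcal B \cap \mathcal E_2]$ described in Section~\ref{subsec-filtration}. First I would union-bound $\mathcal U \subseteq \bigcup_{(U,\check\pi)} \mathcal U_{U,\check\pi}$, where $\mathcal U_{U,\check\pi}$ is the event that the edge-count conditions (ii) and (iii) hold for this specific pair, and the union is over $(U, \check\pi)$ extending $\tilde\pi$ and satisfying (i). Parametrizing by $m := |U\setminus\mathtt{U}_{k-1}| \ge N^{-1}\eta n$ and by the mismatch count $w \ge \zeta m/2$, the combinatorial enumeration of such $(U,\check\pi)$ contributes at most $\exp(O(m\log n))$ per level $(m,w)$.

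Next, for each fixed $(U,\check\pi)$, I would pass from the conditional probability $\mathcal P_{\pi^*,\mathtt{E}_{k-1}}[\mathcal U_{U,\check\pi} \mid \mathcal B\cap\mathcal E_2]$ to an unconditional first-moment estimate via FKG. Under the product measure $\mathcal P_{\pi^*,\mathtt{E}_{k-1}}$, both $\mathcal B$ (by the monotonicity of balanced loads, Lemma~\ref{lem-load-monotonicity}) and $\mathcal E_2$ are decreasing events in the remaining Bernoulli indicators. Writing $\mathcal U_{U,\check\pi} = A \cap D$ with $A$ (condition (ii)) increasing and $D$ (condition (iii)) decreasing, I would exploit the decreasing nature of $D\cap\mathcal B\cap\mathcal E_2$ together with the pointwise bound $\mathcal P[A\cap D \mid \mathcal B\cap\mathcal E_2] \le \mathcal P[A\cap D]/\mathcal P[\mathcal B\cap\mathcal E_2]$ to reduce matters to (a) an unconditional bound on $\mathcal P_{\pi^*,\mathtt{E}_{k-1}}[A\cap D]$ and (b) a crude $\exp(-O(n))$-scale lower bound on $\mathcal P_{\pi^*,\mathtt{E}_{k-1}}[\mathcal B\cap\mathcal E_2]$ for good realizations, the latter of which one can obtain by a reverse first-moment calculation using that $\mathtt{H}_{k-1}$ is sparse when the realization is good.

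Finally, I would estimate the unconditional $\mathcal P_{\pi^*,\mathtt{E}_{k-1}}[A\cap D]$ by an orbit decomposition of $\sigma := \pi^*\circ \check\pi^{-1}$ as in Section~\ref{subsec-orbit-decomposition}. The new-edge count $N := |\mathcal E_{\check\pi}(U)\setminus \mathcal E_{\check\pi}(\mathtt{U}_{k-1})|$ decomposes as $E_s + E_1 + E_{>1}$, with $E_s + E_{>1}$ enjoying the sharp $\exp(-\gamma x\log n)$ tail of Lemma~\ref{lem-tail-crude}, while $E_1$ is essentially the edge count in $\mathcal H_{\pi^*}$ restricted to fixed-point vertices. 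The assumption $w \ge \zeta m/2$ places at least $w$ vertices on non-trivial orbits of $\sigma$, and the well-balanced condition (iii) forbids $N$ from being absorbed entirely by $E_1$ or by a dense substructure inside the fixed-point part; together these force $\Omega(w)$ edges to originate from non-trivial orbits, producing a tail factor $\exp(-\Omega(\zeta m\log n))$ that beats both $\exp(O(m\log n))$ (enumeration) and $\exp(O(n))$ (from the $\mathcal P[\mathcal B\cap\mathcal E_2]$ denominator) for all $\eta$ below some $\eta_1(\alpha,\lambda,\varepsilon)$. The main obstacle is precisely this last step — carrying out the \cite{DD23b}-style orbit tail bound in the partial-matching, conditional setting, and rigorously extracting the $\Omega(w)$ non-trivial-orbit edges out of condition (iii); the subsidiary technical subtleties are the boundary contributions from edges between $\mathtt{U}_{k-1}$ and $U\setminus\mathtt{U}_{k-1}$, and the quantitative lower bound on $\mathcal P[\mathcal B\cap\mathcal E_2]$ alluded to in Step 2.
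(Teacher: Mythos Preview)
Your overall scaffold---union bound over $(U,\check\pi)$, orbit decomposition, FKG to remove conditioning---matches the paper, but two concrete aspects of the execution do not go through as written.

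The more serious one is the coarse orbit decomposition $E_s+E_1+E_{>1}$ with the tail from Lemma~\ref{lem-tail-crude}. That lemma gives rate $\gamma=\alpha\wedge\tfrac12$ for $E_s+E_{>1}$, and your final balance reads (after the single constraint $E_1\le u_k n_1 = u_k(m-w)$ extracted from (iii)) as $(\text{enumeration})\cdot(\text{tail})=\exp\big((w-\gamma(u_kw-\eta m))\log n+o(n\log n)\big)$, which is $o(1)$ only if $\gamma u_k>1$. For $\alpha\le\tfrac12$ this holds, but for $\alpha>\tfrac12$ one has $\gamma=\tfrac12$ and in the late rounds $u_k\approx\alpha^{-1}+\varepsilon$, so $\gamma u_k\approx\tfrac12(\alpha^{-1}+\varepsilon)<1$ whenever $\alpha$ is close to $1$; the first moment does not close. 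The paper's fix is precisely the refined $L$-level decomposition of Section~\ref{subsec-orbit-decomposition-conditioning}: one groups $\check\pi$ by the full cycle-length profile $(n_1,\ldots,n_L)$ with $L$ as in \eqref{eq-L}, uses condition~(iii) via Lemma~\ref{lem-constraints-on-Ek} to obtain the whole family of constraints $\sum_{t\le k}E_t\le u_k\sum_{t\le k}n_t$ for every $k\le L$, pairs this with the refined enumeration $\exp\big((m-\sum_t n_t/t)\log n+O(n)\big)$ of Lemma~\ref{lem-enumeration} and the per-length tail rates $\alpha_k=(k-1)/k$ of Lemma~\ref{lem-tail-prob-conditioning}, and closes the bookkeeping with the algebraic Lemma~\ref{lem-algebra}. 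Only this finer scheme pushes the effective tail rate from $\tfrac12$ up to $\alpha_{L+1}\approx\alpha$ and works for all $\alpha\in(0,1]$.

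The FKG step is also handled differently. You keep (iii) as a decreasing event $D$ and bound $\mathcal P[A\cap D\mid\mathcal B\cap\mathcal E_2]\le\mathcal P[A\cap D]/\mathcal P[\mathcal B\cap\mathcal E_2]$, which forces you to lower-bound $\mathcal P_{\pi^*,\mathtt E_{k-1}}[\mathcal B\cap\mathcal E_2]$; this is not obvious, since $\mathcal B$ quantifies over all alternative matchings $\pi_t'$ and depends on the full graphs $G_1,G_2$ outside $\mathtt U_{k-1}$. The paper bypasses this entirely: Lemma~\ref{lem-constraints-on-Ek} converts (iii) into a \emph{deterministic} constraint $\mathbf x\in\Delta(n_1,\ldots,n_L)$ on the edge-count vector, so that $\{\text{(ii)}\cap\text{(iii)}\}\subset\bigcup_{\mathbf x\in\Delta}\{E_s\ge x_{-1},E_c\ge x_0,\ldots,E_{>L}\ge x_{L+1}\}$, and each event in the union is increasing; FKG then drops $\mathcal B\cap\mathcal E_2$ with no denominator at all. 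Even granting your denominator estimate, you would still need something like Lemma~\ref{lem-constraints-on-Ek} to make the unconditional $\mathcal P[A\cap D]$ tractable.
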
 
	
	\begin{proof}[Proof of Proposition~\ref{prop-each-step-is-good} assuming Proposition~\ref{prop-reduced-conditional-probability}]
    Demote $\mathcal W$ as the event in \eqref{eq-each-step-is-nice}. We claim that $\mathcal W^c$ is contained in $\mathcal U$. To verify this, assume that $\mathcal W$ fails, then we choose $U=U_k$ and $\check\pi=\widetilde\pi$ as identified by the algorithm in Round-$k$. The failure of $\mathcal W$ implies $(U,\check\pi)$ satisfies Item-(i). Additionally, by definition $U_k$ is the set of vertices with loads at least $l_k$ in $\mathcal H_{\pi_k}$, and thus the maximizer of $f_{l_k}^{\widetilde\pi}$. In particular, 
		\[
		f_{l_k}^{\widetilde\pi}(U_k)=l_k|\mathcal E_{\widetilde\pi}(U_k)|-|U_k|\ge f_{l_k}^{\widetilde\pi}(\mathtt{U}_{k-1})=l_k|\mathcal E_{\widetilde\pi}(\mathtt{U}_{k-1})|-|\mathtt{U}_{k-1}|\,,
		\] 
	rearranging which implies Item-(ii). Item-(iii) follows from a similar reasoning: since $\mathtt{U}_{k-1}$ is the set of vertices in $\mathcal H_{\widehat\pi}(U)$ that have loads at least $u_k$, and thus the maximizer of $f_{u_k}^{\widetilde\pi}$, we have
		$
		f_{u_k}^{\widetilde\pi}(\mathtt{U}_{k-1})\le f_{u_k}^{\widetilde\pi}(W)
		$ for any $\mathtt{U}_{k-1}\subset W\subset U_k$, indicating the desired inequality. This verifies the claim, and thus from Proposition~\ref{prop-reduced-conditional-probability}, we have for any good realization of $\mathcal F_{k-1}$, the conditional probability of $\mathcal W$ under $\mathcal F_{k-1}$ is $1-o(1)$.
		
		Recalling that a realization of $\mathcal F_{k-1}$ is good, if the induced subgraph $\mathcal H_{\tilde\pi}(\mathtt{U}_{k-1})$ has no more than $An$ edges, which happens with high probability by Lemma~\ref{lem-max-E-pi}. Therefore, by the iterative expectation formula, we have
        \begin{align*}
\mathcal P^*[\mathcal W]=\mathbb{E}\big[\mathcal P_{\pi^*,\mathtt E_{k-1}}[\mathcal W\mid \mathcal F_{k-1}]\big]\ge\mathbb{E}\big[\mathcal P_{\pi^*,\mathtt E_{k-1}}[\mathcal W\mid \mathcal F_{k-1}]\mathbf{1}\{\mathcal F_{k-1}\text{ is good}\}\big]\ge 1-o(1)\,,
        \end{align*}
        as desired.
	\end{proof}
	
	%For clearance, we restate what remains to prove in the following proposition. Recall that a realization of $\mathcal F_{k-1}$ is good, if the induced subgraph $\mathcal H_{\tilde\pi}(\mathtt{U}_{k-1})$ has no more than $An$ edges. We have shown that Proposition~\ref{prop-wrong-pairs-are-rare} provided the following is true.

	\subsection{The orbit decomposition under conditioning}\label{subsec-orbit-decomposition-conditioning}
	In this subsection, we prepare for the proof of Proposition~\ref{prop-reduced-conditional-probability}. Again, we fix $1\le k\le N$ together with a good realization 
	\[
	(\mathtt{U}_0,\dots,\mathtt{U}_{k-1},\tilde\pi,\mathtt{H}_{k-1}=(\mathtt{U}_{k-1},\mathtt{E}_{k-1}))
	\]
	of $\mathcal F_{k-1}$. Our strategy is to apply a union bound to control $\mathcal P[\mathcal U\mid \mathcal F_{k-1}]$. Towards this end, we will need to understand the conditional probability that a particular pair $(U,\check\pi)$ satisfies conditions (i)-(iii) in the event $\mathcal U$. The main probability loss comes from condition (ii), and thus we focus on the conditional distribution of $|\mathcal E_{\check\pi}(U)\setminus \mathcal E_{\tilde\pi}(\mathtt{U}_{k-1})|$ under $\mathcal F_{k-1}$. This turns out to be closely related with the orbit decomposition in Section~\ref{subsec-orbit-decomposition}. The difference here is that instead of looking at edge cycles in the whole set $\operatorname{E}$, we only restrict to $\operatorname{E}(U)\setminus \operatorname{E}(\mathtt{U}_{k-1})$, the set of edges that are within $U$ but not within $\mathtt U_{k-1}$. Such a restriction creates additional chain structures that did not appear in Section~\ref{subsec-orbit-decomposition}, as we elaborate in more detail below.
	
Fix a subset $ U \subset [n] $ with $ U \supset \mathtt{U}_{k-1} $, and a partial matching $ \check{\pi} $ on $ U $ such that $ \check{\pi} \mid_{\mathtt{U}_{k-1}} = \tilde{\pi} $. Choose $ \pi $ as an arbitrary extension of $ \check{\pi} $ in $ \operatorname{S}_n $, and consider the orbits in $ \mathcal{O}_{\pi} $ induced by $ \sigma = \pi^* \circ \pi^{-1} $, as defined in Section~\ref{subsec-orbit-decomposition}. Recall that these orbits are edge cycles in $ \operatorname{E} $ induced by the permutation $ \Sigma $. If we restrict to the set $ \operatorname{E}(U) \setminus \operatorname{E}(\mathtt{U}_{k-1}) $, the cycles entirely contained within this set remain unchanged, while other cycles break into chains of the form $ (e_1, \dots, e_k) $, where $ \Sigma(e_i) = e_{i+1} $ for $ 1 \le i \le k-1 $, and $ \Sigma(e_k), \Sigma^{-1}(e_1) \notin \operatorname{E}(U) \setminus \operatorname{E}(\mathtt{U}_{k-1}) $. We denote the new set of orbits (including both cycles and chains) in $ \operatorname{E}(U) \setminus \operatorname{E}(\mathtt{U}_{k-1}) $ as $ \mathcal{O}_{\check{\pi}}(\mathtt{U}_{k-1}, U) $. Furthermore, we call a chain $ (e_1, \dots, e_k) $ in $ \mathcal{O}_{\check{\pi}}(\mathtt{U}_{k-1}, U) $ \textbf{free} if neither $ \Sigma(e_k) $ nor $ \Sigma^{-1}(e_1) $ belongs to $ \mathtt{E}_{k-1} $; otherwise, we call it \textbf{confined}. All these definitions are well-defined in the sense that they do not depend on the specific choice of the extension $ \pi $.

The advantage of defining these orbits is that, under $ \mathcal{P}_{\pi^*, \mathtt{E}_{k-1}} $, the contributions to $ |\mathcal{E}_{\check{\pi}}(U) \setminus \mathcal{E}_{\check{\pi}}(\mathtt{U}_{k-1})| $ from different orbits in $ \mathcal{O}_{\check{\pi}}(\mathtt{U}_{k-1}, U) $ are independent under $\mathcal P_{\pi^*,\mathtt{E}_{k-1}}$. This will become useful when analyzing the conditional probability $ \mathcal{P}^*[\cdot \mid \mathcal{F}_{k-1}] = \mathcal{P}_{\pi^*, \mathtt{E}_{k-1}}[\cdot \mid \mathcal{B} \cap \mathcal{E}_2] $ after dropping the conditioning on $ \mathcal{B} \cap \mathcal{E}_2 $ using the FKG inequality.

At this point, we need to introduce a truncation. Define $ L = L(\alpha, \varepsilon) $ as follows:
\begin{equation}\label{eq-L}
L = 
\begin{cases}
\lfloor (1 - \alpha)^{-1} \rfloor, & \text{if } \alpha < 1\,; \\
\lceil \frac{2 + 2\varepsilon}{\varepsilon} \rceil, & \text{if } \alpha = 1\,.
\end{cases}
\end{equation}
As before, let $ E_s $ and $ E_k $ for $ k = 1, 2, \dots, n^2 $ denote the number of edges in $ \mathcal{E}_{\check{\pi}}(U) \setminus \mathcal{E}_{\check{\pi}}(\mathtt{U}_{k-1}) $ that belong to special cycles and non-special cycles of length $ k = 1, 2, \dots, n^2$ in $ \mathcal{O}_{\check{\pi}}(\mathtt{U}_{k-1}, U) $, respectively. Let $ E_{>L} = E_{L+1} + \cdots + E_{n^2} $ represent the number of edges in cycles of length greater than $ L $.
Additionally, let $ E_c $, $ E_c^f $, and $ E_c^c $ denote the number of edges in $ \mathcal{E}_{\check{\pi}}(U) \setminus \mathcal{E}_{\check{\pi}}(\mathtt{U}_{k-1}) $ that belong to chains, free chains, and confined chains in $ \mathcal{O}_{\check{\pi}}(\mathtt{U}_{k-1}, U) $, respectively. Then, we have $ E_c = E_c^f + E_c^c $, and
\[
|\mathcal{E}_{\check{\pi}}(U) \setminus \mathcal{E}_{\check{\pi}}(\mathtt{U}_{k-1})| = E_s + E_c + E_1 + \cdots + E_L + E_{>L}.
\]
The following lemma is an analogue of \cite[Lemma 2.4]{DD23b} under the measure $ \mathcal{P}_{\pi^*, \operatorname{E}_{k-1}} $.
%, whose proof can be found in the appendix.
	
	\begin{lemma}\label{lem-tail-prob-conditioning}
		Write $\alpha_k=\frac{k-1}{k}$ for $1\le k\le L$ and $\alpha_{L+1}=\alpha\wedge\frac{L}{L+1}$. Assume that $\mathcal F_{k-1}$ is a good realization (i.e. $|\mathtt{E}_{k-1}|\le An$), then the following hold for some $M=o(n\log n)$ and any $0\le x\le O(n)$:
		\begin{align}
			\label{eq-tail-Es}\mathcal P_{\pi^*,\mathtt{E}_{k-1}}[E_s\ge x]&\le \exp(M-x\log n)\,,\\
			\label{eq-tail-Ek}\mathcal P_{\pi^*,\mathtt{E}_{k-1}}[E_k\ge x]&\le \exp(M-\alpha_k x\log n)\,,\forall 1\le k\le L\,,\\
			\label{eq-tail-E>L}\mathcal P_{\pi^*,\mathtt{E}_{k-1}}[E_{>L}\ge x]&\le \exp(M-\alpha_{L+1}x\log n)\,,\\
			\label{eq-tail-Ec}\mathcal P_{\pi^*,\mathtt{E}_{k-1}}[E_c\ge x]&\le \exp(M-\alpha x\log n)\,.
		\end{align}
	\end{lemma}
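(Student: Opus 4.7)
The plan is to extend the proof of \cite[Lemma 2.4]{DD23b} (i.e., Lemma~\ref{lem-tail-crude} above) to the conditional setting, with the main novelty being the chain analysis required for \eqref{eq-tail-Ec}.

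First, I would unpack the conditioning. The measure $\mathcal P_{\pi^*, \mathtt E_{k-1}}$ remains a product measure on the Bernoulli indicators $\{I_{i,j}, J^1_{i,j}, J^2_{i,j}\}_{(i,j) \in \operatorname E}$, with the indicators $I_{i,j}, J^1_{i,j}, I_{\sigma(i), \sigma(j)}, J^2_{\sigma(i), \sigma(j)}$ for $(i,j) \in \mathtt E_{k-1}$ deterministically set to $1$ and all others independent Bernoullis with their original parameters. Because the orbits in $\mathcal O_{\check\pi}(\mathtt U_{k-1}, U)$ lie in $\operatorname E(U) \setminus \operatorname E(\mathtt U_{k-1})$, the $I, J^1, J^2$ indicators governing the event that these edges belong to $\mathcal E_{\check\pi}$ are disjoint from the fixed set, \emph{with the sole exception} of boundary $I$-indicators of confined chains: if $\Sigma^{-1}(e_1) \in \mathtt E_{k-1}$ for the starting edge $e_1$ of a chain, then $\mathcal E_1$ fixes both $I_{\Sigma^{-1}(e_1)}$ and $I_{\Sigma(\Sigma^{-1}(e_1))} = I_{e_1}$, saving up to two $p$-factors in that chain's full-probability.

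Second, I would perform a union bound over orbits of each type. For non-special cycles of length $k$, the argument is essentially unchanged from \cite{DD23b}: the per-cycle full-probability is $(ps^2)^k = (\lambda/n)^k$, and balancing this against the combinatorial count of cycles (at most $O(n^2/k)$) gives the exponent $\alpha_k = (k-1)/k$ in \eqref{eq-tail-Ek}. Special cycles contribute the tighter exponent $1$ in \eqref{eq-tail-Es} due to additional indicator sharing. For \eqref{eq-tail-E>L}, the crude exponent $\alpha_{L+1} = \alpha \wedge L/(L+1)$ follows from summing the cycle-of-length-$k$ contributions for $k > L$. For free chains of length $j$, the per-chain full-probability is $p \cdot (\lambda/n)^j$ (an extra $I$-factor beyond the cycle case), and the number of such chains is at most $O(n^2)$; for confined chains, the per-chain full-probability is larger by at most $p^{-2}$ due to the boundary savings identified above, but their count is bounded by $|\mathtt E_{k-1}| \leq An$, so their combinatorial entropy $\log\binom{An}{m}$ is only $O(n)$ rather than the $\Theta(n\log n)$ one would get from freely-positioned chains. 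Combining the free and confined contributions yields the claimed exponent $\alpha$ in \eqref{eq-tail-Ec}, and all polynomial prefactors plus the boundary-boost cost are absorbed into $M = o(n\log n)$.

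The main obstacle is the chain analysis, which has no analogue in \cite{DD23b}. The subtle point is that without care, the potential per-chain boost $p^{-2} = n^{2\alpha}$ together with $\Theta(n)$ confined chains could formally generate a total multiplicative factor as large as $n^{2\alpha \cdot \Theta(n)} = \exp(\Theta(n\log n))$, which would overwhelm the bound. The rescue is that the confined chains are \emph{anchored} to $\mathtt E_{k-1}$: choosing which $m$ of them are simultaneously full costs only $\log\binom{|\mathtt E_{k-1}|}{m} = O(n)$ entropy, not $m\log n$. Making this entropy-versus-boost balance rigorous, and checking that the same accounting cleanly recovers the exponent $\alpha$ for the range $x \leq O(n)$ of interest, is the delicate step; after this, the remaining tail bounds \eqref{eq-tail-Es}, \eqref{eq-tail-Ek}, \eqref{eq-tail-E>L} follow from routine adaptations of the arguments in \cite{DD23b}.
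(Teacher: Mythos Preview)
Your high-level plan matches the paper's: the cycle bounds \eqref{eq-tail-Es}--\eqref{eq-tail-E>L} carry over from \cite{DD23b} essentially unchanged, only the chain bound \eqref{eq-tail-Ec} is new, and the crucial fact enabling it is that there are at most $O(n)$ confined chains because each is anchored to an edge of $\mathtt E_{k-1}$ and $|\mathtt E_{k-1}|\le An$. The paper uses exactly this.

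Where you diverge from the paper is the \emph{mechanism}, and your version has a gap. The paper does not do a union bound over ``full'' chains; it runs the Chernoff method throughout, extending the transfer-matrix computation of \cite{DD23b} to the chain variables. Concretely, it shows that the free and confined per-chain contributions $Y_k,Z_k$ satisfy $\mathbb E e^{\theta Y_k}\le \mu_1^k+e^\theta n^{-1-2\alpha+o(1)}\mu_2^k$ and $\mathbb E e^{\theta Z_k}\le (1+3ps^2\nu)\mu_1^k$, then picks $\theta=(\alpha+o(1))\log n$ so that $\mu_1=1+(n\log n)^{-1}$. For confined chains the product of the extra factors $(1+3ps^2\nu)^{T_k^c}$ is $\exp(o(n))$ precisely because $\sum_k T_k^c=O(n)$, while $\prod_k\mu_1^{kT_k^c}=\exp(o(n\log n))$ from $\sum_k kT_k^c\le n^2$.

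Your proposal to union-bound over ``which $m$ confined chains are simultaneously full'' does not capture the event $\{E_c\ge x\}$: chains have variable lengths and each contributes anywhere from $0$ to its length, so $E_c\ge x$ is not parametrized by a collection of full chains, and the entropy $\log\binom{An}{m}$ is not the relevant count. A correct union-bound route would have to enumerate size-$x$ subsets $S$ of chain edges and, for each $S$, track how many distinct $I$-indicators it engages---this depends on how $S$ breaks into contiguous blocks within each chain (a free length-$\ell$ block costs $p^{\ell+1}s^{2\ell}$, with the boundary saving only for blocks abutting a confined end). Carrying out that bookkeeping essentially reinvents the $\mu_1,\mu_2$ recursion; the exponential-moment approach packages it automatically and is what makes the ``entropy-versus-boost balance'' you flag actually go through.
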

	
	\begin{remark}
The statement of Lemma~\ref{lem-tail-prob-conditioning} is essentially the same as \cite[Lemma 2.4]{DD23b}, and the proof follows a similar approach: we first compute the exponential moments of these random variables and then apply Chebyshev's inequality. The key difference here lies in \eqref{eq-tail-Ec}, where we need to handle the confined chains, which necessitates the condition $ |\mathtt{E}_{k-1}| = O(n) $. The full details are provided in the appendix.

	\end{remark}
	
Lemma~\ref{lem-tail-prob-conditioning} will be used to control the probability in Item-(ii). At first glance, the main contribution to the probability that $ |\mathcal{E}_{\check{\pi}}(U) \setminus \mathcal{E}_{\check{\pi}}(\mathtt{U}_{k-1})| $ becomes large seems to come from the case where $ E_1 $ itself is large, since its tail decays the slowest. However, Item-(iii) imposes additional constraints on the random variables $ E_1, E_2, \dots, E_L $, preventing this from happening, as we discuss below.

For a partial matching $ \check{\pi} $ on $ U \supset \mathtt{U}_{k-1} $ with $ \check{\pi} \mid_{\mathtt{U}_{k-1}} = \tilde{\pi} $, we choose an arbitrary extension $ \pi $ of $ \check{\pi} $ in $ \operatorname{S}_n $. We call a cycle of $ \sigma = (\pi^*)^{-1} \circ \pi $ \textbf{complete} if it is entirely contained within $ U $. Note that the complete cycles do not depend on the specific choice of $ \pi $. Furthermore, we define $ N_k = N_k(\mathtt{U}_{k-1}, U, \check{\pi}) $ for $ k = 1, 2, \dots, L $ as the set of vertices $ i \in U \setminus \mathtt{U}_{k-1} $ such that $ i $ belongs to a complete cycle of length $ k $. We make the following crucial observation.

	\begin{lemma}\label{lem-constraints-on-Ek}
		With the above notations, given that Item-(iii) holds, we have 
		\begin{equation}\label{eq-constraints-on-Ek}
			\sum_{t=1}^kE_t\le u_k\sum_{t=1}^k |N_t|,\ \forall\ 1\le k\le L\,.
		\end{equation}
	\end{lemma}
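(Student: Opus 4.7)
The plan is to apply Item-(iii) to a carefully chosen test set that captures exactly the short complete cycles. Set
\[
W \;=\; \mathtt{U}_{k-1} \cup \bigcup_{t=1}^{k} N_t\,,
\]
so that $\mathtt{U}_{k-1} \subset W \subset U$ and, since the $N_t$'s are disjoint by definition, $|W \setminus \mathtt{U}_{k-1}| = \sum_{t=1}^{k} |N_t|$. If we can show that every edge contributing to $E_{t'}$ for any $1 \le t' \le k$ lies in $\mathcal{E}_{\check\pi}(W) \setminus \mathcal{E}_{\check\pi}(\mathtt{U}_{k-1})$, then Item-(iii) applied to this particular $W$ immediately yields
\[
\sum_{t=1}^{k} E_t \;\le\; \bigl|\mathcal{E}_{\check\pi}(W) \setminus \mathcal{E}_{\check\pi}(\mathtt{U}_{k-1})\bigr| \;\le\; u_k \,|W \setminus \mathtt{U}_{k-1}| \;=\; u_k \sum_{t=1}^{k} |N_t|\,,
\]
which is precisely \eqref{eq-constraints-on-Ek}.

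The containment follows from the defining property of non-special cycles recalled in Section~\ref{subsec-orbit-decomposition}. Fix any extension $\pi \in \operatorname{S}_n$ of $\check\pi$, let $\sigma = \pi^* \circ \pi^{-1}$, and consider an edge $e=(x,y) \in \operatorname{E}(U)\setminus \operatorname{E}(\mathtt{U}_{k-1})$ lying on a non-special cycle of length $t \le k$ in $\mathcal{O}_{\check\pi}(\mathtt{U}_{k-1}, U)$. The non-special property gives $t = n_\Sigma(x,y) = \operatorname{LCM}(n_\sigma(x), n_\sigma(y))$, so both $n_\sigma(x)$ and $n_\sigma(y)$ divide $t$ and are in particular at most $k$. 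Moreover, because the entire $\Sigma$-orbit of $e$ lies in $\operatorname{E}(U)\setminus \operatorname{E}(\mathtt{U}_{k-1})$, the $t$ pairs $(\sigma^i(x), \sigma^i(y))$ for $0 \le i < t$ all have both coordinates in $U$; this exhausts the $\sigma$-orbits of $x$ and of $y$, so the $\sigma$-cycles through $x$ and through $y$ are both \emph{complete} (i.e.\ contained in $U$) and of length at most $k$.

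It remains to place $x, y$ inside $W$. If $x \in \mathtt{U}_{k-1}$ then trivially $x \in W$; otherwise $x \in U \setminus \mathtt{U}_{k-1}$ belongs to a complete cycle of length $n_\sigma(x) \le k$, so $x \in N_{n_\sigma(x)} \subset W$. The identical argument applies to $y$, hence $e \in \operatorname{E}(W)$, and since $e \notin \operatorname{E}(\mathtt{U}_{k-1})$ by assumption we conclude $e \in \mathcal{E}_{\check\pi}(W) \setminus \mathcal{E}_{\check\pi}(\mathtt{U}_{k-1})$, completing the reduction. I do not foresee a genuine obstacle: the only mild subtlety is keeping straight that for non-special $\Sigma$-cycles the length $t$ forces $n_\sigma(x), n_\sigma(y)$ to divide (and thus be bounded by) $t$, which is precisely the structural fact that makes $W = \mathtt{U}_{k-1} \cup \bigcup_{t=1}^{k} N_t$ the right test set in Item-(iii).
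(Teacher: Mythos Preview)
Your proof is correct and follows essentially the same approach as the paper: you choose the test set $W=\mathtt{U}_{k-1}\cup N_1\cup\cdots\cup N_k$, argue that every edge counted by $E_1+\cdots+E_k$ has both endpoints in $W$ because vertices on short non-special $\Sigma$-cycles lie in complete $\sigma$-cycles of length at most $k$, and then apply Item-(iii). The paper's proof is the same argument, only stating the key containment more tersely; your version just makes the $\operatorname{LCM}$ and completeness reasoning explicit.
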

	\begin{proof}
Fix an arbitrary $ 1 \le k \le L $. For any $ (x, y) \in \operatorname{E}(U) \setminus \operatorname{E}(\mathtt{U}_{k-1}) $ that belongs to a non-special cycle in $ \mathcal{O}_{\check{\pi}}(\mathtt{U}_{k-1}, U) $ with length no greater than $ k $, we have both $ x $ and $ y $ belonging to a complete cycle of length at most $ k $. Consequently, if either $ x $ or $ y $ is in $ U \setminus \mathtt{U}_{k-1} $, then it must lie in $ N_1 \cup \cdots \cup N_k $. Therefore, we have
\[
\sum_{t=1}^k E_t \le |\mathcal{E}_{\check{\pi}}(\mathtt{U}_{k-1} \cup N_1 \cup \cdots \cup N_k) \setminus \mathcal{E}_{\check{\pi}}(\mathtt{U}_{k-1})|,
\]
and by Item-(iii), this is bounded by the right-hand side of \eqref{eq-constraints-on-Ek}. This completes the proof.
	\end{proof}
	
We end this subsection with an enumerative upper bound on the set of partial matchings $ \check{\pi} $ with a given profile of $ |N_k| $ for $ k = 1, \dots, L $. Consider a triple $ (\mathtt{U}_{k-1}, U, \tilde{\pi}) $ with $ \mathtt{U}_{k-1} \subset U $, and let $ T = |U \setminus \mathtt{U}_{k-1}| $. For $ n_1, \dots, n_L \in \mathbb{N} $, we define $ \operatorname{S}_{\mathtt{U}_{k-1}, U, \tilde{\pi}}(n_1, \dots, n_L) $ as the set of partial matchings $ \check{\pi} $ on $ U $ with $ \check{\pi} \mid_{\mathtt{U}_{k-1}} = \tilde{\pi} $, and such that $ |N_k| = n_k $ for $ 1 \le k \le L $. The following lemma is an analogue of \cite[Lemma 2.5]{DD23b}, and its proof is provided in the appendix.

	\begin{lemma}\label{lem-enumeration}
		For any triple $(\mathtt{U}_{k-1},U,\tilde\pi)$, with the above notations, we have
		\begin{equation}\label{eq-enumeration-S(n1,nL)}
			|\operatorname{S}_{\mathtt{U}_{k-1},U,\tilde\pi}(n_1,\dots,n_L)|\le\exp\Big(\Big(T-n_1-\frac{n_2}{2}-\cdots-\frac{n_L}{L}\Big)\log n+O(n)\Big)\,.		
		\end{equation}
	\end{lemma}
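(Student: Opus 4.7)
The plan is to prove Lemma~\ref{lem-enumeration} by stratifying the enumeration of $\check\pi \in \operatorname{S}_{\mathtt{U}_{k-1},U,\tilde\pi}(n_1,\ldots,n_L)$ according to the cycle structure of $\sigma = (\pi^*)^{-1} \circ \pi$ on $U$, where $\pi$ is any extension of $\check\pi$ to $\operatorname{S}_n$. The crucial observation is that once the full $\sigma$-cycle through a vertex $v \in N_k$ is specified, the value $\check\pi(v) = \pi^*(\sigma(v))$ is forced; hence the only free parameters are (a) the partition of $U \setminus \mathtt{U}_{k-1}$ into $N_1,\ldots,N_L$ and a remainder $N_{>L}$ of size $T' = T - \sum_k n_k$, (b) the collection of length-$k$ $\sigma$-cycles covering each $N_k$, and (c) the injection values of $\check\pi$ on $N_{>L}$.

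I would bound each contribution in turn. Part (a) costs at most $L^T = e^{O(n)}$ since $L = O(1)$, and is absorbed into the $O(n)$ error. Part (c) is an injection from $N_{>L}$ into $[n]$ avoiding the already-used targets, contributing at most $n^{T'}$ choices and hence $T' \log n$ in the exponent. The heart of the argument is part (b): for each fixed $k \in \{1,\ldots,L\}$, I count the number of ways the vertices of $N_k$ can be covered by length-$k$ $\sigma$-cycles contained in $U$. In the \emph{pure} case where every such cycle lies entirely inside $N_k$, the count is the classical cycle decomposition number $n_k!/((n_k/k)!\, k^{n_k/k})$, whose logarithm Stirling bounds by $(n_k - n_k/k) \log n + O(n)$.

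Combining the three parts gives exactly the claimed exponent
\[
\Big(T' + \sum_{k=1}^{L}(n_k - \tfrac{n_k}{k})\Big)\log n + O(n) = \Big(T - n_1 - \tfrac{n_2}{2} - \cdots - \tfrac{n_L}{L}\Big)\log n + O(n).
\]
The main obstacle will be handling \emph{mixed} cycles that pass through $\mathtt{U}_{k-1}$ in addition to $N_k$: at first glance these threaten to inflate the count, but since $\sigma_0 := (\pi^*)^{-1} \circ \tilde\pi$ is already fixed and injective on $\mathtt{U}_{k-1}$, each maximal $\mathtt{U}_{k-1}$-arc between two consecutive $N_k$-vertices in such a cycle is uniquely determined by the succeeding $N_k$-vertex and the arc length via iteration of $\sigma_0^{-1}$. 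Enumerating over the partition profile $(m_j)_{j \le k}$ specifying how many $N_k$-vertices lie in each mixed cycle, I expect to show that the per-profile count is $\exp(n_k(1 - j/k)\log n + O(n))$, strictly maximized by the pure case $j = k$; summing over the $O(1)$-many profiles preserves the desired bound. This parallels \cite[Lemma 2.5]{DD23b}, and because the conditioning on $\tilde\pi$ can only tighten (rather than relax) the cycle constraints, the argument there should extend with only minor adaptations.
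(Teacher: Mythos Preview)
Your direct-enumeration approach is sound and will yield the lemma, but note a slip in the displayed exponent: the per-profile count should be $\exp\bigl(n_k(1 - 1/j)\log n + O(n)\bigr)$ rather than $\exp\bigl(n_k(1 - j/k)\log n + O(n)\bigr)$. Only with that correction is the expression maximized at $j = k$, where it equals $(n_k - n_k/k)\log n$ and matches your pure-case computation; as written, the maximum would sit at $j = 1$ and the argument would not close.

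The paper takes a different and shorter route that avoids the mixed-cycle casework entirely via a ``shrinking map'' trick. It first extends each $\check\pi \in \operatorname{S}_{\mathtt{U}_{k-1},U,\tilde\pi}(n_1,\ldots,n_L)$ to a full permutation $\pi \in \operatorname{S}_n$ (picking up a factor $(n - |U|)!$), then defines an injection $\pi \mapsto \tau$ into permutations of $[n]\setminus\mathtt{U}_{k-1}$ by setting $\tau(i) = (\pi^*\circ\pi^{-1})^{(\ell)}(i)$ for the minimal $\ell \ge 1$ landing back in $[n]\setminus\mathtt{U}_{k-1}$; injectivity holds because $\pi|_{\mathtt{U}_{k-1}} = \tilde\pi$ is fixed. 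Since skipping over $\mathtt{U}_{k-1}$ can only shorten cycles, each $\tau$ in the image has at least $n_1 + \cdots + n_t$ vertices in cycles of length $\le t$ for every $t \le L$, and the paper then invokes the standard count on permutations with such cumulative cycle-length constraints (\cite[Appendix~D]{WXY22}), divides by $(n-|U|)!$, and is done. In effect, the shrinking map collapses each of your $\mathtt{U}_{k-1}$-arcs to a single step, turning a mixed $k$-cycle with $j$ vertices in $U\setminus\mathtt{U}_{k-1}$ into a pure $j$-cycle of $\tau$---precisely the reduction you carry out by hand. Your route has the merit of being self-contained (no appeal to the cited permutation-count fact) and makes the combinatorics explicit, while the paper's is cleaner and sidesteps the profile stratification altogether.
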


	%	For any $(x,y)\in \operatorname{E}(U)\setminus \operatorname{E}(\operatorname{U}_{k-1})$ not belonging to a special cycle in $\mathcal O_\check\pi(\operatorname{U}_{k-1},U)$, we have $n_\sigma(x)\vee n_\sigma(y)\le n_\Sigma(x,y)$. As a result, edges in $\mathcal E_\pi(W)\setminus \mathcal E_\pi(U)$ coming from non-special cycles with length at most $k$ is contained in the set $\mathcal E_\pi(U\cup N_k)\setminus \mathcal E_\pi(U)$, where $N_k=\{x\in U\setminus W:n_\sigma(x)\le k\}$. 

	\subsection{Proof of Proposition~\ref{prop-reduced-conditional-probability}}\label{subsec-proof-or-reduction}
We are now ready to prove Proposition~\ref{prop-reduced-conditional-probability}. Building on the previous discussions, the proof proceeds in conceptually the same way as in \cite{DD23b}, except that we need to apply the FKG inequality at an appropriate point and deal with the calculations with more precision.

	\begin{proof}[Proof of Propostition~\ref{prop-reduced-conditional-probability}]
Fix $ 1 \le k \le N $ and a good realization of $ \mathcal{F}_{k-1} $. We will bound the conditional probability of $ \mathcal{U} $ under $ \mathcal{F}_{k-1} $ using a union bound. We have
\begin{align}
    \mathcal{P}^*[\mathcal{U} \mid \mathcal{F}_{k-1}] \le \sum_{U} \sum_{\check{\pi}} \mathcal{P}^*[\text{(ii) and (iii) hold for } (U, \check{\pi}) \mid \mathcal{F}_{k-1}],
\end{align}
where the sum is taken over $ U \supset \mathtt{U}_{k-1} $ with $ |U \setminus \mathtt{U}_{k-1}| \ge N^{-1} \eta n $, and $ \check{\pi} : U \to [n] $ is an extension of $ \tilde{\pi} $ that satisfies
\begin{equation}\label{eq-some-wrong-pair}
    \frac{\#\{i \in U \setminus \mathtt{U}_{k-1} : \check{\pi}(i) \neq \pi^*(i)\}}{|U \setminus \mathtt{U}_{k-1}|} \ge \frac{\zeta}{2}.
\end{equation}
For any fixed choice of $ U $, we will show that the sum over $ \check{\pi} $ is upper-bounded by $ \exp(-(\chi+o(1)) n \log n) $ for some positive constant $ \chi = \chi(\alpha, \lambda, \eta, \varepsilon, \delta) $. Assuming this is true, the desired result follows because we are only summing over $ \exp(O(n)) $ many sets $ U $.

		Now we fix a set $U\supset \mathtt{U}_{k-1}$ with $|U\setminus \mathtt{U}_{k-1}|=T\ge N^{-1}\eta n$. We may upper-bound the sum over $\check\pi$ as
		\begin{align}
			&\ \sum_{\check\pi}\mathcal P^*[\text{(ii), (iii) hold for }(U,\check\pi)\mid \mathcal F_{k-1}]\nonumber\\
			\le&\ \sum_{n_1,\dots,n_L}\sum_{\check\pi\in \operatorname{S}_{\mathtt{U}_{k-1},U,\tilde\pi}(n_1,\dots,n_L)}\mathcal P^*[\text{(ii), (iii) hold for }(U,\check\pi)\mid \mathcal F_{k-1}]\nonumber\\
			\le&\ \sum_{n_1,\dots,n_L}|\operatorname{S}_{\mathtt{U}_{k-1},U,\tilde\pi}(n_1,\dots,n_L)|\cdot \max_{\check\pi\in \operatorname{S}_{U_{k-1},U,\tilde\pi}(n_1,\dots,n_L)}\mathcal P^*[\text{(ii), (iii) hold for }(U,\check\pi)\mid \mathcal F_{k-1}]\,.\label{eq-enumeration-times-max-probability}
		\end{align}
		where the sum of $n_i$'s is over the tuples $(n_1,\dots,n_L)\in \{0,1,\dots,T\}^L$ with $n_1+\cdots+n_L\le T$ and $n_1\le (1-\zeta/2)T$ (the latter constraint comes from \eqref{eq-some-wrong-pair}). In order to control the maximal probability term in \eqref{eq-enumeration-times-max-probability}, we define $\Delta(n_1,\dots,n_L)$ as the set 
		\[
		\Big\{\mathbf{x}=(x_t)_{-1\le t\le L+1}\in [0,N^2]^{L+1}, \sum_{t=-1}^{L+1}x_t\ge l_kT,\sum_{t=1}^kx_t\le u_k\sum_{t=1}^kn_t,1\le k\le L\Big\}\,.
		\]
		From Lemma~\ref{lem-constraints-on-Ek} we conclude for any $\check\pi\in\operatorname{S}_{\mathtt{U}_{k-1},U,\tilde\pi}(n_1,\dots,n_L)$, it holds
		\begin{align}
			&\nonumber\ \mathcal P^*[\text{(ii), (iii) hold for }(U,\check\pi)\mid \mathcal F_{k-1}] \\\le&\ \sum_{\mathbf{x}\in \Delta(n_1,\dots,n_L)\cap \mathbb{N}^{L+3}}\mathcal P^*[E_s\ge x_{-1},E_s\ge x_0,E_t\ge x_t,\forall 1\le t\le L,E_{>L}\ge x_{L+1}\mid \mathcal F_{k-1}]\,.\label{eq-union-bound-over-x}
		\end{align}
		It is clear that the event in the right-hand side of \eqref{eq-union-bound-over-x} is increasing, we conclude from \eqref{eq-conditioning-description} and the FKG inequality that %(recalling $\mathcal B\cap \mathcal E_2$ is decreasing)
		\begin{align*}
			&\ \mathcal P^*[E_s\ge x_{-1},E_c\ge x_0,E_t\ge x_t,\forall 1\le t\le L, E_{>L}\ge x_{L+1}\mid \mathcal F_{k-1}]
			\\=&\ \mathcal P_{\pi^*,\mathtt{E}_{k-1}}[E_s\ge x_{-1},E_c\ge x_0,E_t\ge x_t,\forall -1\le t\le L,E_{>L}\ge x_{L+1}\mid \mathcal B\cap \mathcal E_2]\\
			\le&\  \mathcal P_{\pi^*,\mathtt{E}_{k-1}}[E_s\ge x_{-1},E_c\ge x_0,E_t\ge x_t,1\le t\le L,E_{>L}\ge x_{L+1}]\,.
		\end{align*}
		Since $E_s,E_c,E_1,\dots,E_L,E_{>L}$ are independent under $\mathcal P_{\pi^*,\mathtt{E}_{k-1}}$, Lemma~\ref{lem-tail-prob-conditioning} yields that the above probability is further bounded by
		\[
		\exp\big(o(n\log n)-(x_{-1}+\alpha x_0+\alpha_1x_1+\cdots+\alpha_Lx_L+\alpha_{L+1}x_{L+1})\log n\big)\,.
		\]
		Combining this with \eqref{eq-enumeration-times-max-probability}, \eqref{eq-union-bound-over-x} and Lemma~\ref{lem-enumeration}, we obtain that 
		\begin{align}
			\nonumber&\ \sum_{\check\pi}\mathcal P^*[\text{(ii), (iii) hold for }(U,\check\pi)\mid \mathcal F_{k-1}]\\
			\le&\ \exp(o(n\log n))\cdot \sup_{n_1,\dots,n_L, \mathbf{x}}\exp\left(\Big(T-\sum_{t=1}^L\frac{n_t}{t}-x_{-1}-\alpha x_0-\sum_{t=1}^L\alpha_tx_t-\alpha_{L+1}x_{L+1}\Big)\log n\right)\,,\label{eq-algebra-bound}
		\end{align}
		where the supremum is taken over $(n_1,\dots,n_L)\in [0,T]^L$ with $n_1+\dots+n_L\le T$, $n_1\le (1-\zeta/2)T$, and $\mathbf{x}=(x_t)_{-1\le t\le L+1}\in \Delta(n_1,\dots,n_L)$. Now, we pick $\eta_1=\alpha\varepsilon\zeta/8$. For $\eta<\eta_1$, the next lemma implies \eqref{eq-algebra-bound} is upper-bounded by $\exp(-(\chi+o(1)) n\log n)$ for some positive constant $\chi=\chi(\alpha,\lambda,\varepsilon,\eta)$. This completes the proof.
	\end{proof}
	\begin{lemma}\label{lem-algebra}
		For any $\eta<\alpha\varepsilon\zeta/8$, there exists $\chi=\chi(\alpha,\lambda,\varepsilon,\eta,\delta)>0$ such that for any $T\ge N^{-1}\eta n$, $n_1,\dots,n_L\ge 0$ with $n_1+\cdots+n_L\le T$ and $n_1\le (1-\zeta/2)T$, and any $\mathbf{x}=(x_{-1},x_0,x_1,\dots,x_L,x_{L+1})\in \Delta(n_1,\dots,n_L)$, it holds that
		\begin{equation}\label{eq-algebra}
			T-\sum_{t=1}^L\frac{n_t}{t}-x_{-1}-\alpha x_0-\sum_{t=1}^L\alpha_tx_t-\alpha_{L+1}x_{L+1}\le -\chi n\,.
		\end{equation}
	\end{lemma}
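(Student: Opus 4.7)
The plan is to first reduce the adversary's choice of $\mathbf{x}\in\Delta(n_1,\ldots,n_L)$ to a clean lower bound depending only on $(n_1,\ldots,n_L)$, and then optimize over the $n_t$'s subject to $n_1\le(1-\zeta/2)T$ and $\sum n_t\le T$. Abbreviate $l=l_k$, $u=u_k$, and $N_L=\sum_{t=1}^Ln_t$. The key intermediate bound I would prove is
\[
x_{-1}+\alpha x_0+\sum_{t=1}^L\alpha_tx_t+\alpha_{L+1}x_{L+1}\ge \alpha_{L+1}(lT-uN_L)+u\sum_{t=1}^L\alpha_tn_t,
\]
valid uniformly on $\Delta(n_1,\ldots,n_L)$. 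To derive it, I would first drop the $x_{-1},x_0$ terms by using $\alpha_{L+1}\le \min(1,\alpha)$ together with $\sum_{t=-1}^{L+1}x_t\ge lT$, reducing the left side to $\alpha_{L+1}lT+\sum_{t=1}^L(\alpha_t-\alpha_{L+1})x_t$. Writing $\alpha_{L+1}-\alpha_t=\sum_{s=t+1}^{L+1}(\alpha_s-\alpha_{s-1})$ and exchanging the order of summation expresses the remaining sum in terms of partial sums $S_j=\sum_{t\le j}x_t$; applying the Item-(iii) constraints $S_j\le uN_j$ and then reversing the rearrangement yields the desired bound.

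Plugging this into the left side of \eqref{eq-algebra} and simplifying via $\alpha_t=1-1/t$, I upper bound the LHS by
\[
T(1-\alpha_{L+1}l)+\sum_{t=1}^Ln_tc_t,\qquad c_t=\frac{u-1}{t}-u(1-\alpha_{L+1}).
\]
Since $c_t$ is strictly decreasing in $t$ and $c_1=u\alpha_{L+1}-1>0$ (using $l\alpha_{L+1}>1$), the adversary's greedy optimum over $n$ sets $n_1=(1-\zeta/2)T$ and then allocates the remaining mass $\zeta T/2$ either entirely to $n_2$ (if $c_2>0$) or to nothing (if $c_2\le 0$). Substituting these two cases, using the identity $c_1-c_2=(u-1)/2$ in the former, and applying $u-l\le\eta$ together with $u-1\ge l-1$ and $u\alpha_{L+1}-1\ge l\alpha_{L+1}-1$, I obtain the explicit upper bounds
\[
\mathrm{LHS}\le T\bigl[\alpha_{L+1}\eta-(\zeta/4)(l-1)\bigr]\quad\text{or}\quad T\bigl[\alpha_{L+1}\eta-(\zeta/2)(l\alpha_{L+1}-1)\bigr].
\]

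To finish, I would combine $l-1\ge\varepsilon$ in all cases (from $l\ge\alpha^{-1}+\varepsilon\ge 1+\varepsilon$) with the inequalities $l\alpha_{L+1}-1\ge\alpha\varepsilon$ (for $\alpha<1$, where $\alpha_{L+1}=\alpha$) and $l\alpha_{L+1}-1\ge\varepsilon/2$ (for $\alpha=1$, where $\alpha_{L+1}=L/(L+1)$ and the choice $L\ge(2+2\varepsilon)/\varepsilon$ forces $\varepsilon L-1\ge 1+2\varepsilon$). The hypothesis $\eta<\alpha\varepsilon\zeta/8$ then makes both bracketed expressions strictly negative by a margin of order $\varepsilon\zeta$; combined with $T\ge N^{-1}\eta n$, this yields the desired $\chi=\chi(\alpha,\lambda,\varepsilon,\delta,\eta)>0$.

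The hardest part is the first step, namely the sharp lower bound on the $\mathbf{x}$-expression: one must correctly discard the unconstrained variables $x_{-1},x_0,x_{L+1}$ using $\alpha_{L+1}\le\min(1,\alpha)$, then apply Item-(iii) through a summation-by-parts that translates the $x$-constraints into the desired $n$-expression. A secondary subtlety is that the final numerical verification must handle both regimes $\alpha<1$ and $\alpha=1$ simultaneously; the $\alpha=1$ case is more delicate and relies essentially on the specific choice of $L$ in \eqref{eq-L}.
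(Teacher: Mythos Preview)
Your proof is correct and follows essentially the same approach as the paper: both derive the identical intermediate bound $x_{-1}+\alpha x_0+\sum\alpha_tx_t+\alpha_{L+1}x_{L+1}\ge\alpha_{L+1}(lT-uN_L)+u\sum\alpha_tn_t$ via the Abel-summation argument you describe, and then finish by a short case analysis. The only cosmetic difference is that the paper splits on whether $N_L\le(1-\zeta/4)T$ (using the crude bound $\alpha_t\ge\tfrac12$ for $t\ge2$) whereas you solve the residual LP directly by splitting on the sign of $c_2$; both routes yield the same final constant $\chi=N^{-1}\eta(\alpha\varepsilon\zeta/8-\eta)$.
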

	\noindent
	This lemma is an analogue of \cite[Lemma 2.5]{DD23b} and its proof is purely elementary, so we leave it in the appendix as well. So far we have proved Theorem~\ref{thm-main-ER}-(i).

	\section{Proof of Negative result}\label{sec-negative}
	
In this section, we prove Theorem~\ref{thm-main-ER}-(ii). Fix an arbitrarily small $ \varepsilon > 0 $. Let $ (\pi^*, G^1, G^2) \sim \mathcal{P}^* $, and denote by $ U^{\operatorname{light}} $ the set of light vertices, i.e., vertices $ i \in [n] $ with $ \partial\theta_{\pi^*}(i) \le \alpha^{-1} - \varepsilon $. Our goal is to show that it is almost impossible to correctly recover the values of $ \pi^* $ for a positive fraction of vertices in $ U^{\operatorname{light}} $. To this end, we will prove a slightly stronger result: even if the information of $ W \equiv [n] \setminus U^{\operatorname{light}} $ and $ \pi^* \mid_W $ are known, it is still impossible to recover $ \pi^* $ on $\varepsilon n$ vertices of $ U^{\operatorname{light}} $.

	\begin{proposition}\label{prop-reduced-negative-result}
		For any $\varepsilon>0$ and $(\pi^*,G_1,G_2)\sim \mathcal P^*$, there does not exist an estimator $\hat{\pi}=\hat{\pi}(G_1,G_2,W,\pi^*\mid_{W})\in \operatorname{B}(U^{\operatorname{light}}, [n]\setminus \pi^*(W))$ such that $\hat\pi(i)=\pi^*(i)$ holds for at least $\varepsilon n$ many of $i\in U^{\operatorname{light}}$ with non-vanishing probability. 
	\end{proposition}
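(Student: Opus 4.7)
The plan is to adapt the impossibility framework of \cite{DD23b} to the present conditional setting, in which $(W,\pi^*|_W)$ is treated as additional side information. Suppose for contradiction that $\widehat\pi$ is an estimator of $\pi^*|_{U^{\operatorname{light}}}$ satisfying $\#\{i\in U^{\operatorname{light}}:\widehat\pi(i)=\pi^*(i)\}\ge\varepsilon n$ with positive probability. Writing $\mu_n$ for the posterior of $\pi^*|_{U^{\operatorname{light}}}$ given $(G_1,G_2,W,\pi^*|_W)$, setting $\sigma=\widehat\pi$ and applying the definition of the posterior immediately yields that, with positive $\mathcal P^*$-probability,
$$
\max_{\sigma\in \operatorname{B}(U^{\operatorname{light}},[n]\setminus\pi^*(W))}\mu_n\big[\pi:\operatorname{ov}(\sigma,\pi)\ge \varepsilon n\big]\ge \Omega(1)\,.
$$
Thus it suffices to show that this maximum is $o(1)$ with high probability under $\mathcal P^*$.

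Next I would introduce a null model $\mathcal Q$, obtained from $\mathcal P^*$ by resampling the edges of $G_2$ within $\pi^*(U^{\operatorname{light}})$ as an independent $\mathbf G(|U^{\operatorname{light}}|,ps)$; this destroys the correlation inside the light part while preserving marginal laws elsewhere. Restricted to the typical event $\{|\mathcal E_{\pi^*}(U^{\operatorname{light}})|\le An\}$, which holds with probability $1-\exp(-\omega(n))$ under both measures by Lemma~\ref{lem-max-E-pi} and standard binomial large deviations, a direct Radon--Nikodym computation gives $d\mathcal P^*/d\mathcal Q\le\exp(O(n))$. Consequently, any event with $\mathcal Q$-probability at least $1-\exp(-\omega(n))$ automatically has $\mathcal P^*$-probability $1-o(1)$; hence it suffices to prove the posterior bound above under $\mathcal Q$ with the stronger guarantee $1-\exp(-\omega(n))$.

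Under $\mathcal Q$ the random variable $\pi^*|_{U^{\operatorname{light}}}$ is uniformly distributed on $\operatorname{B}(U^{\operatorname{light}},[n]\setminus\pi^*(W))$ and independent of all graph data, so correctly matching any specified $\sigma$ on $\varepsilon n$ coordinates is $\exp(-\Theta(n\log n))$-unlikely purely by entropy. Writing $\mu_n[\pi:\operatorname{ov}(\sigma,\pi)\ge\varepsilon n]$ as a ratio $\sum_\pi P^{|\mathcal E_\pi|}/\mathcal Z$, I would bound the numerator via a truncated first-moment calculation along the orbit-decomposition framework of Section~\ref{subsec-orbit-decomposition-conditioning}---now tracking chains and cycles whose vertices may straddle the $W/U^{\operatorname{light}}$ boundary---and bound the denominator from below by the contribution of $\pi^*|_{U^{\operatorname{light}}}$ itself. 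The light-load hypothesis $\partial\theta_{\pi^*}(i)\le\alpha^{-1}-\varepsilon$ for $i\in U^{\operatorname{light}}$ ensures that the weight $P^{|\mathcal E_\pi|}$ of any alternate matching cannot offset the $\exp(-\Theta(n\log n))$ entropic deficit; a union bound over $\sigma$ (losing only $\exp(O(n\log n))$ many terms) still leaves a net bound of $\exp(-\omega(n))$.

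The main obstacle is carrying out the first-moment estimate under $\mathcal Q$ with an error of order $\exp(-\omega(n))$, rather than the merely $o(1)$ bound that would suffice when working directly under $\mathcal P^*$; this strengthening is precisely what forces the contiguity detour. Achieving it demands refined versions of the tail estimates of Lemma~\ref{lem-tail-prob-conditioning} applied uniformly over $\sigma$ and over admissible subsets, together with a careful application of the variational characterization of balanced load (Lemma~\ref{lem-load-variational-characterization}) to preclude anomalously dense induced subgraphs of $\mathcal H_\sigma$ from inflating $P^{|\mathcal E_\pi|}$. A secondary complication is the orbit bookkeeping for cycles and confined chains that cross the $W/U^{\operatorname{light}}$ boundary, handled in the spirit of Section~\ref{subsec-orbit-decomposition-conditioning}.
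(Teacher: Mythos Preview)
Your three-step strategy---reduce to posterior anti-concentration, transfer to a null model via an $\exp(O(n))$-type contiguity, then prove a $1-\exp(-\omega(n))$ bound under the null---is exactly the route the paper takes through Propositions~\ref{prop-posteior-anti-concentration}, \ref{prop-dp/dq-bound}, and~\ref{prop-posterior-bound-under-Q}. However, the contiguity step as you state it contains a genuine error.

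You assert that ``a direct Radon--Nikodym computation gives $d\mathcal P^*/d\mathcal Q\le\exp(O(n))$'' on the event $\{|\mathcal E_{\pi^*}(U^{\operatorname{light}})|\le An\}$. This is false: the pointwise density carries a factor $P^{|\mathcal E_{\pi^*}(U^{\operatorname{light}})|}$ with $P=(1-2ps+ps^2)/\bigl(p(1-s)^2\bigr)=n^{\alpha+o(1)}$, so even after your truncation the derivative is $\exp(\Theta(n\log n))$, not $\exp(O(n))$. What the paper actually proves (Proposition~\ref{prop-dp/dq-bound}) is the \emph{second-moment} inequality $\mathbb{E}_{\mathcal P^{\ddagger}}\bigl[d\mathcal P^{\ddagger}/d\mathcal Q\bigr]\le\exp(C_1n)$, and this is not a direct computation: one must first truncate onto an admissibility event (Definition~\ref{def-admissible}) and then bound the resulting second moment by the subgraph-enumeration product \eqref{eq-product}, using the light-load constraint \eqref{eq-density-upper-bound} factor by factor. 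The transfer to $\mathcal Q$ then proceeds via Markov's inequality on this second moment, not via a pointwise bound.

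There is a second gap in your final step. A crude union bound over $\sigma_0$ costs $|\mathtt U_1|!=\exp(\Theta(n\log n))$, the same scale as the entropic gain you invoke, so whether the net exponent is negative depends on constants you do not control. The paper circumvents this by using admissibility to pass from agreement on $\varepsilon n$ coordinates to agreement on a \emph{good set} $A$ of sublinear size $K_n=\Theta(n/\sqrt{\log n})$ whose vertices are well separated in $\mathcal H_{\pi^*}$ and far from short cycles; this structure allows a factorized second-moment bound that beats the $\exp(K_n\log n)$ union-bound cost and delivers the $\exp(-n\sqrt{\log n})$ rate in Proposition~\ref{prop-posterior-bound-under-Q}.
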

	
	Theorem~\ref{thm-main-ER}-(ii) follows readily from Proposition~\ref{prop-reduced-negative-result}. For a permutation $\pi$ on $[n]$, we define the shrinking permutation $\sigma$ of $\pi$ on a subset $U$ of $[n]$ by letting $\sigma(i)=\pi^{(k)}(i)$ for any $i\in U$, where $k$ is the minimal positive integer such that $\pi^{(k)}(i)\in U$. 
	
	\begin{proof}[Proof of Theorem~\ref{thm-main-ER}-(ii) assuming Proposition~\ref{prop-reduced-negative-result}]
		It suffices to show the claim for sufficiently small $\varepsilon$, hence we may fix a small $\varepsilon>0$ such that $\alpha^{-1}-\varepsilon$ is not an atom of $\mu_\lambda$. Assuming the contrary, then there exists an estimator $\widehat\pi=\widehat\pi(G_1,G_2)$ such that $\mathcal G_1\equiv \{\operatorname{ov}(\widehat\pi,\pi^*)\ge( F_{\lambda}(\alpha^{-1}-\varepsilon)+\varepsilon)n\}$ happens with non-vanishing probability under $\mathcal P^*$. Let $\sigma$ be the shrinking permutation of $(\pi^*)^{-1}\circ\widehat{\pi}$ on $U^{\operatorname{light}}$, and define the bijection $\hat\pi$ from $U^{\operatorname{light}}$ to $[n]\setminus\pi^*(W)$ as $\hat\pi=\pi^*\circ\sigma$ (note that $\hat\pi$ only depends on $\widehat\pi$, $W$ and $\pi^*\mid_W$). It is straightforward to check that $\hat\pi(i)=\pi^*(i)$  for any $i\in U^{\operatorname{light}}$ such that $\widehat\pi(i)=\pi^*(i)$. Therefore, for $\mathcal G_2=\{|V|\le (F_\lambda(\alpha^{-1}-\varepsilon)+\varepsilon/2)n\}$, we have $\#\{i\in U^{\operatorname{light}}:\hat\pi(i)=\pi^*(i)\}\ge \varepsilon n/2$ under $\mathcal G_1\cap \mathcal G_2$. However,  Proposition~\ref{prop-weak-convergence-of-empirical-measure} implies $\mathcal P^*[\mathcal G_2]=1-o(1)$, and thus $\mathcal P^*[\mathcal G_1\cap \mathcal G_2]$ is non-vanishing. This contradicts to the result of Proposition~\ref{prop-reduced-negative-result}. Thus, the desired results follows.  
	\end{proof}
	
We take a further step in our reduction by conditioning on the realization of $ U^{\operatorname{light}} $ and $ \pi^* \mid_{W} $. To simplify notation, for two sets $ U, W \subset [n] $ with $ U \cap W = \emptyset $, we write $ \operatorname{E}(U; W) = \operatorname{E}(U \cup W) \setminus \operatorname{E}(W) $ and $ \mathcal{E}_{\pi^*}(U; W) = \mathcal{E}_{\pi^*}(U \cup W) \setminus \mathcal{E}_{\pi^*}(W) $. Fix two sets $ \mathtt{U}_1, \mathtt{U}_2 \subset [n] $ with $ |\mathtt{U}_1| = |\mathtt{U}_2| $, and denote $ \mathtt{W}_i = [n] \setminus \mathtt{U}_i $ for $ i = 1, 2 $. For $ (\pi^*, G_1, G_2) \sim \mathcal{P}^* $, let $ G_1^{\mathtt{U}_1} $ (resp. $ G_2^{\mathtt{U}_2} $) be the subgraph of $ G_1 $ (resp. $ G_2 $) with edge set $ E(G_1) \cap \operatorname{E}(\mathtt{U}_1; \mathtt{W}_1) $ (resp. $ E(G_2) \cap \operatorname{E}(\mathtt{U}_2; \mathtt{W}_2) $). We denote the joint law of $ (\pi^* \mid_{\mathtt{U}_1}, G_1^{\mathtt{U}_1}, G_2^{\mathtt{U}_2}) $ as $ \mathcal{P}_{\mathtt{U}_1, \mathtt{U}_2}^* $. Additionally, we write $\mathcal H_\pi(\mathtt U_1;\mathtt W_1)$ for the $\pi^*$-intersection graph of $G_1^{\mathtt U_1}$ and $G_2^{\mathtt U_2}$. 

We further condition on $ (\pi^*, G_1, G_2) $ such that $ \pi^*(\mathtt{U}_1) = \mathtt{U}_2 $ and $ U \equiv \{i \in [n] : \partial\theta_{\pi^*}(i) \le \alpha^{-1} - \varepsilon\} = \mathtt{U}_1 $. We denote by $ \mathcal{P}_{\mathtt{U}_1, \mathtt{U}_2}^{\dagger, *} $ the conditional version of $ \mathcal{P}_{\mathtt{U}_1, \mathtt{U}_2}^* $. Under this conditioning, the following hold in $ \mathcal{H}_{\pi^*} $: (i) any vertex in $ \mathtt{W}_1 $ has load greater than $ \alpha^{-1} - \varepsilon $ in $ \mathcal{H}_{\pi^*}(\mathtt{W}_1) $, and (ii) any vertex in $ \mathtt{U}_1 $ has load at most $ \alpha^{-1} - \varepsilon $.
We also note that Item-(i) is measurable with respect to the subgraph $ \mathcal{H}_{\pi^*}(\mathtt{W}_1) $, and thus is independent of $ (G_1^{\mathtt{U}_1}, G_2^{\mathtt{U}_2}) $ under $\mathcal P^{\dagger,*}_{\mathtt U_1,\mathtt U_2}$ (since we restrict that $\pi^*(\mathtt U_1)=\mathtt U_2$). Moreover, given that Item-(i) is true, Item-(ii) is equivalent to
\begin{equation}\label{eq-density-upper-bound}
    |\mathcal{E}_{\pi^*}(U; \mathtt{W}_1)| \le (\alpha^{-1} - \varepsilon) |U|, \quad \forall U \subset \mathtt{U}_1.
\end{equation}
Hence, we conclude that
\[
\mathcal{P}_{\mathtt{U}_1, \mathtt{U}_2}^{\dagger, *}[\cdot] = \mathcal{P}_{\mathtt{U}_1, \mathtt{U}_2}^*[\cdot \mid \eqref{eq-density-upper-bound} \text{ holds}].
\]

	For $\sigma_1,\sigma_2\in \operatorname{B}(\mathtt U_1,\mathtt U_2)$, we write $\operatorname{ov}(\sigma_1,\sigma_2)$ for the number of indices $i\in \mathtt U_1$ such that $\sigma_1(i)=\sigma_2(i)$. 
	To prove Proposition~\ref{prop-reduced-negative-result}, it suffices to prove the following stronger result.
	\begin{proposition}\label{prop-reduced-negative-result-stronger}
		For any subsets $\mathtt U_1,\mathtt U_2$ with $|\mathtt U_1|=|\mathtt U_2|$, given $(\pi^*\mid_{\mathtt U_1},G_1^{\mathtt U_1},G_2^{\mathtt U_2})\sim \mathcal P_{\mathtt U_1,\mathtt U_2}^{\dagger,*}$, then there is no estimator $\hat\sigma=\hat\sigma(G_1^{\mathtt U_1},G_2^{\mathtt U_2})\in \operatorname{B}(\mathtt U_1,\mathtt U_2)$ such that $\operatorname{ov}(\hat\sigma,\pi^*\mid_{\mathtt U_1})\ge \varepsilon n$ happens with non-vanishing probability.
	\end{proposition}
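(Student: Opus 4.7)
The plan is to follow the two-step strategy sketched in the proof overview. First I would reduce the impossibility to a concentration property of the posterior distribution of $\pi^*|_{\mathtt U_1}$ given $(G_1^{\mathtt U_1},G_2^{\mathtt U_2})$ under $\mathcal P^{\dagger,*}_{\mathtt U_1,\mathtt U_2}$. The same computation that produces the formula for $\mu$ in the introduction shows that this posterior is supported on $\operatorname{B}(\mathtt U_1,\mathtt U_2)$ and satisfies
\[
\mu(\sigma)\propto P^{|\mathcal E_\sigma(\mathtt U_1;\mathtt W_1)|}\mathbf{1}\{\sigma\text{ feasible}\},
\]
where $P=(1-2ps+ps^2)/(p(1-s)^2)=n^{\alpha+o(1)}$ and feasibility encodes \eqref{eq-density-upper-bound} together with the light-vertex condition on $\mathtt W_1$. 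A standard MAP argument (if an estimator $\hat\sigma$ agrees with $\pi^*|_{\mathtt U_1}$ on $\varepsilon n$ coordinates with non-vanishing probability, then the posterior must place mass at least some $\eta>0$ on an $\varepsilon n$-ball around some $\hat\sigma$) reduces Proposition~\ref{prop-reduced-negative-result-stronger} to showing that for every $\eta>0$ the event
\[
\mathcal M\equiv\bigl\{\max_{\hat\sigma\in\operatorname{B}(\mathtt U_1,\mathtt U_2)}\mu\bigl[\sigma:\operatorname{ov}(\hat\sigma,\sigma)\ge\varepsilon n\bigr]\ge\eta\bigr\}
\]
has $\mathcal P^{\dagger,*}_{\mathtt U_1,\mathtt U_2}$-probability tending to zero.

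Next I would introduce the null model $\mathcal Q_{\mathtt U_1,\mathtt U_2}$, under which $G_1^{\mathtt U_1}$ and $G_2^{\mathtt U_2}$ are independent inhomogeneous Erd\H{o}s-R\'enyi graphs with the same marginals as under $\mathcal P^*_{\mathtt U_1,\mathtt U_2}$ (each potential edge is included independently with probability $ps$). Computing the Radon-Nikodym derivative $\mathrm{d}\mathcal P^*_{\mathtt U_1,\mathtt U_2}/\mathrm{d}\mathcal Q_{\mathtt U_1,\mathtt U_2}$ edge by edge and using $|E(G_i^{\mathtt U_i})|=O(n)$ on typical realizations shows the derivative is at most $\exp(O(n))$; the further conditioning to pass from $\mathcal P^*$ to $\mathcal P^{\dagger,*}$ costs at most another $\exp(O(n))$ factor, since by the local weak convergence picture of Section~\ref{subsec-limiting-load-measure} the conditioning event has probability $\exp(-O(n))$. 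Hence we obtain an $\exp(O(n))$-contiguity statement: for any event $\mathcal A$ depending only on $(G_1^{\mathtt U_1},G_2^{\mathtt U_2})$,
\[
\mathcal P^{\dagger,*}_{\mathtt U_1,\mathtt U_2}[\mathcal A]\le\exp(Cn)\cdot\mathcal Q_{\mathtt U_1,\mathtt U_2}[\mathcal A].
\]
Since $\mathcal M$ is such an event, it suffices to prove $\mathcal Q_{\mathtt U_1,\mathtt U_2}[\mathcal M]=\exp(-\omega(n))$.

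Finally I would bound $\mathcal Q[\mathcal M]$ by a union bound over the at most $|\mathtt U_1|!=\exp(O(n\log n))$ choices of $\hat\sigma$. For each fixed $\hat\sigma$, the event that the posterior mass on its $\varepsilon n$-ball exceeds $\eta$ forces
\[
\sum_{\sigma:\operatorname{ov}(\hat\sigma,\sigma)\ge\varepsilon n}P^{|\mathcal E_\sigma|}\ge\eta\sum_\sigma P^{|\mathcal E_\sigma|}\ge\eta\cdot|\mathtt U_1|!,
\]
which in turn forces $|\mathcal E_\sigma(\mathtt U_1;\mathtt W_1)|$ to be atypically large (of order $\alpha^{-1}\varepsilon n$) for some $\sigma$ near $\hat\sigma$. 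Because edges are genuinely independent under $\mathcal Q$, the orbit/chain-decomposition tail estimates from Section~\ref{subsec-orbit-decomposition-conditioning} (an analogue of Lemma~\ref{lem-tail-prob-conditioning}, which actually becomes cleaner in the null setting) give this probability as $\exp(-\Omega(n\log n))$, with the largest per-edge rate saving coming from cycles and chains of length greater than the truncation parameter $L$ of \eqref{eq-L}. The main obstacle is to show that this $\exp(-\Omega(n\log n))$ gain is strong enough to simultaneously absorb (i) the enumeration factor $|\mathtt U_1|!$ over $\hat\sigma$, (ii) the enumeration over profiles $(n_1,\dots,n_L)$ controlled by Lemma~\ref{lem-enumeration}, and (iii) the contiguity loss $\exp(Cn)$; this is exactly the arithmetic balance carried out in Lemma~\ref{lem-algebra} and requires the refinement of the \cite{DD23b} argument alluded to in the overview, so that the needed bound is $1-\exp(-\omega(n))$ rather than just $1-o(1)$. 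Once the calculation closes we obtain $\mathcal Q[\mathcal M]=\exp(-\omega(n))$, and contiguity yields $\mathcal P^{\dagger,*}[\mathcal M]=o(1)$, completing the proof of Proposition~\ref{prop-reduced-negative-result-stronger}.
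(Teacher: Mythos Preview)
Your overall strategy (reduce to posterior anti-concentration, pass to the null model $\mathcal Q_{\mathtt U_1,\mathtt U_2}$, then tail bounds) matches the paper, but two of your steps do not close. First, the pointwise contiguity $\mathcal P^{\dagger,*}_{\mathtt U_1,\mathtt U_2}[\mathcal A]\le\exp(Cn)\,\mathcal Q_{\mathtt U_1,\mathtt U_2}[\mathcal A]$ is not available. Your justification via ``$|E(G_i^{\mathtt U_i})|=O(n)$ on typical realizations'' already fails for $\alpha<1$, where these graphs typically have $n^{(3-\alpha)/2+o(1)}\gg n$ edges; more importantly, the Radon--Nikodym derivative is an average over $\sigma$ of terms containing $P^{|\mathcal E_\sigma|}$, and under the $\dagger$-conditioning each such term is only bounded by $\exp((1-\alpha\varepsilon)|\mathtt U_1|\log n)$, not $\exp(O(n))$. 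The paper instead introduces an additional admissibility truncation (defining $\mathcal P^{\ddagger,*}$) and proves a \emph{second moment} bound $\mathbb E_{\mathcal P^{\ddagger}}[d\mathcal P^\ddagger/d\mathcal Q]\le\exp(C_1 n)$ (Proposition~\ref{prop-dp/dq-bound}) via a subgraph-counting argument; this yields only that the likelihood ratio is $\exp(O(n))$ on a high-$\mathcal P^\ddagger$-probability set, which is what is actually used.

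Second, your union bound over $|\mathtt U_1|!=\exp(\Theta(n\log n))$ choices of $\hat\sigma$ cannot be absorbed by an $\exp(-\Omega(n\log n))$ per-$\hat\sigma$ tail: the implied constants do not match when $|\mathtt U_1|=\Theta(n)$, and the lemmas you invoke (Lemmas~\ref{lem-enumeration} and~\ref{lem-algebra}) belong to the positive side and give no leverage here. The paper circumvents this by extracting from the $\varepsilon n$-overlap a \emph{good set} $A\subset\mathtt U_1$ of size only $K_n=\Theta(n/\sqrt{\log n})$, chosen to be well-separated and far from short cycles in $\mathcal H_\pi$. Pinning $\sigma$ on $A$ reduces the enumeration to $\exp(O(K_n\log n))=\exp(O(n\sqrt{\log n}))$, and the good-set geometry is then exploited in a second-moment computation (Proposition~\ref{prop-posterior-bound-under-Q}) to obtain the required $\exp(-n\sqrt{\log n})$ bound under $\mathcal Q$.
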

	
In what follows, we focus on the proof of Proposition~\ref{prop-reduced-negative-result-stronger}. This appears very similar to the impossibility result for partial recovery in \cite{DD23b}. While the key ingredient in \cite{DD23b} is the fact that the maximal subgraph density of $ \mathcal{H}_{\pi^*} $, i.e., the maximal load in $ \mathcal{H}_{\pi^*} $, does not exceed $ \alpha^{-1} - \varepsilon $ with high probability, here we have \eqref{eq-density-upper-bound} serving as an analogue. Indeed, we will follow conceptually similar arguments to those in \cite{DD23b}, with a few technical modifications to prove Proposition~\ref{prop-reduced-negative-result-stronger}. In this section, we only outline the main approach and highlight the necessary adjustments. For the technical details that involve minor adaptations from \cite{DD23b}, we defer them to the appendix.

To begin, we perform an appropriate truncation of $ \mathcal{P}_{\mathtt{U}_1, \mathtt{U}_2}^{\dagger, *} $. We define a suitable event $ \mathcal{A}_1 $ that is measurable with respect to $ \mathcal{H}_{\pi^*}(\mathtt{U}_1; \mathtt{W}_1) $, and let
\[
\mathcal{P}_{\mathtt{U}_1, \mathtt{U}_2}^{\ddagger, *} \equiv \mathcal{P}_{\mathtt{U}_1, \mathtt{U}_2}^{\dagger, *}[\cdot \mid \mathcal{A}_1] = \mathcal{P}_{\mathtt{U}_1, \mathtt{U}_2}^*[\cdot \mid \mathcal{A}],
\]
where $ \mathcal{A} \equiv \mathcal{A}_1 \cap \{\eqref{eq-density-upper-bound} \text{ holds}\} $. Roughly speaking, $ \mathcal{A}_1 $ includes several desirable properties of the true intersection graph $ \mathcal{H}_{\pi^*}(\mathtt{U}_1; \mathtt{W}_1) $ that facilitate the later proof. Since this truncation is only for technical reasons, we defer the precise definition of $ \mathcal{A}_1 $ to the appendix. Additionally, we will show in the appendix that $ \mathcal{A}_1 $ is typical under $ \mathcal{P}_{\mathtt{U}_1, \mathtt{U}_2}^{\dagger, *} $, so that
\[
\operatorname{TV}(\mathcal{P}_{\mathtt{U}_1, \mathtt{U}_2}^{\dagger, *}, \mathcal{P}_{\mathtt{U}_1, \mathtt{U}_2}^{\ddagger, *}) = o(1).
\]
Therefore, we may switch from $ \mathcal{P}_{\mathtt{U}_1, \mathtt{U}_2}^{\dagger, *} $ to $ \mathcal{P}_{\mathtt{U}_1, \mathtt{U}_2}^{\ddagger, *} $ in the proof of Proposition~\ref{prop-reduced-negative-result-stronger}.

The next step is to reduce Proposition~\ref{prop-reduced-negative-result-stronger} to a property of the posterior measure on $ \pi^* \mid_{\mathtt{U}_1} $ given $ (G_1^{\mathtt{U}_1}, G_2^{\mathtt{U}_2}) $. Denote by $ \mathcal{P}_{\mathtt{U}_1, \mathtt{U}_2}^{\ddagger} $ the marginal of $ \mathcal{P}_{\mathtt{U}_1, \mathtt{U}_2}^{\ddagger, *} $ on $ (G_1^{\mathtt{U}_1}, G_2^{\mathtt{U}_2}) $. For $ (G_1^{\mathtt{U}_1}, G_2^{\mathtt{U}_2}) \sim \mathcal{P}_{\mathtt{U}_1, \mathtt{U}_2}^{\ddagger} $, let $ \mu = \mu(\mathtt{U}_1, \mathtt{U}_2, G_1^{\mathtt{U}_1}, G_2^{\mathtt{U}_2}) $ be the posterior measure of $ \pi^* \mid_{\mathtt{U}_1} \in \operatorname{B}(\mathtt{U}_1, \mathtt{U}_2) $ under $ \mathcal{P}_{\mathtt{U}_1, \mathtt{U}_2}^{\ddagger, *} $ given $ (G_1^{\mathtt{U}_1}, G_2^{\mathtt{U}_2}) $. By definition, for any $ \sigma \in \operatorname{B}(\mathtt{U}_1, \mathtt{U}_2) $, we have:
\begin{align}\label{eq-posterior}
    \mu(\sigma) = \mathcal{P}_{\mathtt{U}_1, \mathtt{U}_2}^{\ddagger, *}[\pi^* \mid_{\mathtt{U}_1} = \sigma \mid G_1^{\mathtt{U}_1}, G_2^{\mathtt{U}_2}] = \frac{\mathcal{P}_{\mathtt{U}_1, \mathtt{U}_2}^{\ddagger, *}[\pi^* \mid_{\mathtt{U}_1} = \sigma, G_1^{\mathtt{U}_1}, G_2^{\mathtt{U}_2}]}{\mathcal{P}_{\mathtt{U}_1, \mathtt{U}_2}^{\ddagger}[G_1^{\mathtt{U}_1}, G_2^{\mathtt{U}_2}]}.
\end{align}
We claim that it suffices to show the following anti-concentration property of $ \mu $.

	\begin{proposition}\label{prop-posteior-anti-concentration}
		For any $\mathtt U_1,\mathtt U_2\subset [n]$ with $|\mathtt U_1|=|\mathtt U_2|$, with probability $1-o(1)$ over $(G_1^{\mathtt U_1},\mathtt G_2^{\mathtt U_2})\sim \mathcal P_{\mathtt U_1,\mathtt U_2}^{\ddagger}$, it holds for $\mu=\mu(\mathtt U_1,\mathtt U_2,G_1^{\mathtt U_1},G_2^{\mathtt U_2})$ defined as in \eqref{eq-posterior} that
		\begin{equation*}\label{eq-anti-concentration}
			\begin{aligned}
				\sup_{\sigma_0\in \operatorname{B}(\mathtt U_1,\mathtt U_2)}\mu[\sigma:\operatorname{ov}(\sigma_0,\sigma)\ge \varepsilon n]=\sup_{\sigma_0\in \operatorname{B}(\mathtt U_1,\mathtt U_2)}\sum_{\substack{\sigma\in \operatorname{B}(\mathtt U_1,\mathtt U_2)\\\operatorname{ov}(\sigma_0,\sigma)\ge \varepsilon n}}\frac{\mathcal P_{\mathtt U_1,\mathtt U_2}^{\ddagger,*}[\pi^*\mid_{\mathtt U_1}=\sigma,G_1^{\mathtt U_1},G_2^{\mathtt U_2}]}{\mathcal P_{\mathtt U_1,\mathtt U_2}^{\ddagger}[G_1^{\mathtt U_1},G_2^{\mathtt  U_2}]}=o(1)\,.
			\end{aligned}
		\end{equation*}
	\end{proposition}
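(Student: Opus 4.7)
The plan is to adapt the impossibility argument of \cite[Section 3]{DD23b}, supplemented by an $\exp(O(n))$-contiguity step to compensate for the absence (in our setting) of the indistinguishability between correlated and null models employed there.

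First, I would introduce a null measure $\mathcal{Q} = \mathcal{Q}_{\mathtt{U}_1,\mathtt{U}_2}$ on $(G_1^{\mathtt{U}_1}, G_2^{\mathtt{U}_2})$ under which the two graphs are sampled independently with marginals matching those of $\mathcal{P}^{\ddagger}_{\mathtt{U}_1,\mathtt{U}_2}$. The truncation event $\mathcal{A}_1$ is to be defined so that, on its support, the Radon-Nikodym derivative satisfies $d\mathcal{P}^{\ddagger}_{\mathtt{U}_1,\mathtt{U}_2}/d\mathcal{Q}_{\mathtt{U}_1,\mathtt{U}_2} \leq \exp(Cn)$ for some constant $C = C(\alpha,\lambda,\varepsilon) > 0$. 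By this contiguity, it suffices to prove the anti-concentration with $\mathcal{Q}$-probability $1 - \exp(-C'n)$ for arbitrarily large $C'$, rather than the weaker $1 - o(1)$ bound under $\mathcal{P}^{\ddagger}$.

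Under $\mathcal{Q}$ the correlation is destroyed, so the heuristic is that correctly guessing $\varepsilon n$ coordinates of $\pi^*|_{\mathtt{U}_1}$ via any deterministic $\sigma_0$ is $\exp(-\Theta(n \log n))$-unlikely. I would fix $\sigma_0 \in \operatorname{B}(\mathtt{U}_1,\mathtt{U}_2)$ and bound $\mu(\{\sigma : \operatorname{ov}(\sigma_0, \sigma) \geq \varepsilon n\}) = \sum_{\sigma : \operatorname{ov}(\sigma_0,\sigma) \geq \varepsilon n} P^{|\mathcal{E}_\sigma(\mathtt{U}_1;\mathtt{W}_1)|}/Z$ via (i) a first-moment estimate on the numerator, using the orbit-decomposition machinery of Section~\ref{subsec-orbit-decomposition-conditioning} refined by the density constraint \eqref{eq-density-upper-bound} inherent in $\mathcal{P}^{\ddagger}$, and (ii) a probabilistic lower bound $Z \geq \exp(n \log n + O(n))$ via a second-moment computation on the partition function, exploiting the independence of $G_1^{\mathtt{U}_1}$ and $G_2^{\mathtt{U}_2}$ under $\mathcal{Q}$. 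A union bound over the $\exp(n \log n + O(n))$ choices of $\sigma_0$ then completes the proof.

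The main obstacle is achieving the necessary sharpness in the first-moment estimate of step (i): a naive Markov bound gives only per-$\sigma_0$ failure probability $\exp(-\varepsilon n \log n + O(n))$, whereas the union bound over $\sigma_0$ requires $\exp(-(1+\gamma)n \log n)$ for some $\gamma > 0$. The refinement follows \cite{DD23b}'s strategy of exploiting orbit independence together with \eqref{eq-density-upper-bound} to effectively restrict the contributing $\sigma$'s to a highly constrained combinatorial type, sharpening the bound to the required scale. The construction of $\mathcal{A}_1$ ensuring both that the $\exp(O(n))$-contiguity holds and that $\mathcal{A}_1$ is typical under $\mathcal{P}^{\dagger}_{\mathtt{U}_1,\mathtt{U}_2}$—so that $\operatorname{TV}(\mathcal{P}^{\dagger},\mathcal{P}^{\ddagger}) = o(1)$ and the switch is free—is the other significant technical step, to be handled in the appendix.
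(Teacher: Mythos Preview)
Your high-level plan---transfer to the null law $\mathcal Q_{\mathtt U_1,\mathtt U_2}$ via an $\exp(O(n))$ contiguity and then analyze under $\mathcal Q$---matches the paper. But the execution you sketch has a genuine gap at the union-bound step, and the paper resolves it by a different mechanism than the one you describe.

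First, a minor point: the truncation $\mathcal A_1$ does \emph{not} yield a pointwise bound $d\mathcal P^{\ddagger}/d\mathcal Q\le \exp(Cn)$. The paper (Proposition~\ref{prop-dp/dq-bound}) only obtains the moment bound $\mathbb E_{\mathcal P^{\ddagger}}[d\mathcal P^{\ddagger}/d\mathcal Q]\le\exp(C_1n)$ and uses Markov to extract a typical likelihood-ratio event $\mathcal G_{\operatorname{LR}}$. This is fixable, so it is not the main issue.

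The real gap is the union bound. You propose to bound, for each fixed $\sigma_0$, the ratio (numerator over $Z$) via a first-moment estimate on the numerator and a second-moment lower bound on $Z$, then union-bound over the $\exp(\Theta(n\log n))$ choices of $\sigma_0$. Even after the natural reduction to pairs $(B,\sigma_0|_B)$ with $|B|=\varepsilon n$ (which the paper also performs as $M_1\le 2^n M_2$), the enumeration cost is $\exp(\varepsilon n\log n+O(n))$, while a first-moment/Markov bound yields per-term failure probability only $\exp(-\varepsilon n\log n+O(n))$. The product is $\exp(O(n))$, which is exactly the scale of the contiguity factor and therefore \emph{not} small enough: you need the $\mathcal Q$-failure probability to be $\exp(-\omega(n))$, not $\exp(-Cn)$ for some fixed $C$. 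Your claim that ``restricting the contributing $\sigma$'s to a highly constrained combinatorial type'' sharpens the per-$\sigma_0$ bound to $\exp(-(1+\gamma)n\log n)$ is not substantiated and, as far as I can see, is not achievable by a first-moment argument on the numerator.

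The paper's resolution is structurally different. It does \emph{not} lower-bound the partition function $Z$; instead it replaces the denominator by $\mathcal Q$ outright (Proposition~\ref{prop-posterior-bound-under-Q}) and bounds the $\mathcal Q$-ratio by a \emph{second-moment} computation. Crucially, before taking second moments it passes from $M_2$ to $M_3$: it reduces to ``good sets'' $A\subset B$ of \emph{sublinear} size $K_n=\lceil 2n/(\alpha\varepsilon\sqrt{\log n})\rceil$ that are well-separated in $\mathcal H_\pi$ and far from short cycles (guaranteed to exist inside any linear-size $B$ by admissibility). This collapses the enumeration cost to $\exp(K_n\log n+O(n))=\exp(O(n\sqrt{\log n}))$. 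The good-set geometry then makes the $A$-part of the second moment factor cleanly, and the density constraint \eqref{eq-density-upper-bound} applied to $A$ yields an extra $\alpha\varepsilon K_n\log n$ of savings, giving per-term second moment $\exp(-(1+\alpha\varepsilon+o(1))K_n\log n)$ and hence total $\mathcal Q$-failure probability $\exp(-n\sqrt{\log n})$. This super-polynomial-in-$e^n$ rate is what absorbs the $\exp(O(n))$ contiguity loss. The good-set reduction is the missing idea in your proposal.
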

	\begin{proof}[Proof of Proposition~\ref{prop-reduced-negative-result-stronger} assuming Proposition~\ref{prop-posteior-anti-concentration}]
		Fix any estimator $\hat{\sigma}=\hat\sigma(G_1^{\mathtt U_1},G_2^{\mathtt U_2})\in \operatorname{B}(\mathtt U_1,\mathtt U_2)$. Viewing $(\pi^*\mid_{\mathtt U_1},G_1^{\mathtt U_1},G_2^{\mathtt U_2})\sim \mathcal P_{\mathtt U_1,\mathtt U_2}^{\ddagger,*}$ as first sample $(G_1^{\mathtt U_1},G_2^{\mathtt U_2})\sim \mathcal P_{\mathtt U_1,\mathtt U_2}^{\ddagger}$, then sample $\pi^*\mid_{\mathtt U_1}\sim \mu=\mu(\mathtt U_1,\mathtt U_2,G_1^{\mathtt U_1},G_2^{\mathtt U_2})$, we have
		\begin{align*}
			&\ \mathcal P_{\mathtt U_1,\mathtt U_2}^{\ddagger,*}[\operatorname{ov}(\hat\sigma,\pi^*\mid_{\mathtt U_1})\ge \varepsilon n]\\=&\ \mathbb{E}_{(G_1^{\mathtt U_1},G_2^{\mathtt U_2})\sim \mathcal P_{\mathtt U_1,\mathtt U_2}^{\ddagger}}\mathbb{E}_{\pi^*\mid_{\mathtt U_1}\sim \mu}[\mathbf{1}\{\operatorname{ov}(\hat\sigma,\pi^*\mid_{\mathtt U_1})\ge \varepsilon n\}]\\
			\le&\ \mathbb{E}_{(G_1^{\mathtt U_1},G_2^{\mathtt U_2})\sim \mathcal P_{\mathtt U_1,\mathtt U_2}^{\ddagger}}\sup_{\sigma_0\in \operatorname{B}(\mathtt U_1,\mathtt U_2)}\mu[\sigma:\operatorname{ov}(\sigma,\sigma_0)\ge \varepsilon n]\,,
		\end{align*}
		which is $o(1)$ according to Proposition~\ref{prop-posteior-anti-concentration}. Since $\operatorname{TV}(\mathcal P_{\mathtt U_1,\mathtt U_2}^{\dagger,*},\mathcal P_{\mathtt U_1,\mathtt U_2}^{\ddagger,*})=o(1)$, we also have $\mathcal P_{\mathtt U_1,\mathtt U_2}^{\dagger,*}[\operatorname{ov}(\hat\sigma,\pi^*)\ge \varepsilon n]=o(1)$,  completing the proof.
	\end{proof}
	
We now proceed to bound the posterior weight. Define $ \mathcal{Q}_{\mathtt{U}_1, \mathtt{U}_2} $ as the law of the graph pair $ (G_1^{\mathtt{U}_1}, G_2^{\mathtt{U}_2}) $, where $ G_1^{\mathtt{U}_1} $ and $ G_2^{\mathtt{U}_2} $ are independent, and each edge in $ \operatorname{E}(\mathtt{U}_1; \mathtt{W}_1) $ (resp. $ \operatorname{E}(\mathtt{U}_2; \mathtt{W}_2) $) appears in $ E(G_1^{\mathtt{U}_1}) $ (resp. $ E(G_2^{\mathtt{U}_2}) $) with probability $ ps $, independently. The crux of the argument consists of two main components: a quantitative ``transfer theorem" between the original law $ \mathcal{P}_{\mathtt{U}_1, \mathtt{U}_2}^{\ddagger} $ and the simpler law $ \mathcal{Q}_{\mathtt{U}_1, \mathtt{U}_2} $, and a detailed analysis of the posterior weight under $ \mathcal{Q}_{\mathtt{U}_1, \mathtt{U}_2} $.

More precisely, our arguements rely on the following two main technical inputs.

	\begin{proposition}\label{prop-dp/dq-bound}
		There exists $C_1=C_1(\alpha,\lambda,\varepsilon)$ such that for large enough $n$,
		\begin{equation}\label{eq-dp/dq-expO(n)}
			%\mathbb{E}_{\mathcal P_{\mathtt U_1,\mathtt U_2}^{\ddagger}}\frac{\dif \mathcal P_{\mathtt U_1,\mathtt U_2}^{\ddagger}}{\dif \mathcal Q_{\mathtt U_1,\mathtt U_2}}=
            \mathbb{E}_{(G_1^{\mathtt U_1},G_2^{\mathtt U_2})\sim\mathcal P_{\mathtt U_1,\mathtt U_2}^{\ddagger}}\frac{\mathcal P_{\mathtt U_1,\mathtt U_2}^{\ddagger}[G_1^{\mathtt U_1},G_2^{\mathtt U_2}]}{\mathcal Q_{\mathtt U_1,\mathtt U_2}[G_1^{\mathtt U_1},G_2^{\mathtt U_2}]}\le \exp(C_1n)\,.
		\end{equation}
	\end{proposition}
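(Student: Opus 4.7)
The plan is to convert this first-moment bound into a second-moment calculation, decouple the conditioning from the rest, and estimate the two pieces separately. Writing the expectation against $\mathcal Q_{\mathtt U_1,\mathtt U_2}$ gives the identity
\[
\mathbb E_{\mathcal P^{\ddagger}_{\mathtt U_1,\mathtt U_2}}\!\left[\frac{d\mathcal P^{\ddagger}_{\mathtt U_1,\mathtt U_2}}{d\mathcal Q_{\mathtt U_1,\mathtt U_2}}\right]=\mathbb E_{\mathcal Q_{\mathtt U_1,\mathtt U_2}}\!\left[\left(\frac{d\mathcal P^{\ddagger}_{\mathtt U_1,\mathtt U_2}}{d\mathcal Q_{\mathtt U_1,\mathtt U_2}}\right)^{\!2}\right],
\]
so the target is a chi-squared type integral. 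Since $\mathcal P^{\ddagger,*}_{\mathtt U_1,\mathtt U_2}=\mathcal P^{*}_{\mathtt U_1,\mathtt U_2}[\cdot\mid\mathcal A]$, the derivative obeys the pointwise bound
\[
\frac{d\mathcal P^{\ddagger}_{\mathtt U_1,\mathtt U_2}}{d\mathcal Q_{\mathtt U_1,\mathtt U_2}}\le\frac{1}{\mathcal P^{*}_{\mathtt U_1,\mathtt U_2}[\mathcal A]}\cdot\frac{d\mathcal P_{\mathtt U_1,\mathtt U_2}}{d\mathcal Q_{\mathtt U_1,\mathtt U_2}},
\]
where $\mathcal P_{\mathtt U_1,\mathtt U_2}$ denotes the unconditional marginal of $\mathcal P^{*}_{\mathtt U_1,\mathtt U_2}$ on $(G_1^{\mathtt U_1},G_2^{\mathtt U_2})$. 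The proof thus reduces to two separate estimates: \textbf{(A)} a lower bound $\mathcal P^{*}_{\mathtt U_1,\mathtt U_2}[\mathcal A]\ge\exp(-C_0 n)$, and \textbf{(B)} an unconditional second moment bound $\mathbb E_{\mathcal Q}[(d\mathcal P/d\mathcal Q)^{2}]\le\exp(C_2 n)$.

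For \textbf{(A)}, since $\mathcal A_1$ will be designed so as to be typical under $\mathcal P^{\dagger,*}_{\mathtt U_1,\mathtt U_2}$, we have $\mathcal P^{*}_{\mathtt U_1,\mathtt U_2}[\mathcal A]\ge(1/2)\mathcal P^{*}_{\mathtt U_1,\mathtt U_2}[\eqref{eq-density-upper-bound}]$ for large $n$, so it suffices to prove $\mathcal P^{*}_{\mathtt U_1,\mathtt U_2}[\eqref{eq-density-upper-bound}]\ge\exp(-O(n))$. The plan here is to lower-bound this by the much simpler event $\mathcal A'$ that no intersection edge of $\mathcal H_{\pi^{*}}$ is incident to $\mathtt U_1$; clearly $\mathcal A'$ implies \eqref{eq-density-upper-bound} vacuously. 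There are at most $n|\mathtt U_1|$ candidate such edges, each present independently in $\mathcal H_{\pi^*}$ with probability $ps^{2}=\lambda/n$, so $\mathcal P^{*}_{\mathtt U_1,\mathtt U_2}[\mathcal A']\ge(1-\lambda/n)^{n|\mathtt U_1|}\ge\exp(-O(n))$. Since Item-(i) in the definition of $\mathcal P^{\dagger,*}_{\mathtt U_1,\mathtt U_2}$ is $\sigma(\mathcal H_{\pi^{*}}(\mathtt W_1))$-measurable and hence independent of the edges touching $\mathtt U_1$, conditioning on it does not spoil the bound.

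For \textbf{(B)}, expand the chi-squared through two independent samples $\pi_1,\pi_2$ of the underlying permutation:
\[
\mathbb E_{\mathcal Q}\!\left[\left(\frac{d\mathcal P}{d\mathcal Q}\right)^{\!2}\right]=\mathbb E_{\pi_1,\pi_2}\,\mathbb E_{\mathcal Q}\!\left[\frac{d\mathcal P_{\pi_1}\,d\mathcal P_{\pi_2}}{(d\mathcal Q)^{2}}\right].
\]
For fixed $(\pi_1,\pi_2)$ the inner expectation factorizes over the cycles of $\pi_{2}^{-1}\pi_1$ acting on $\operatorname{E}$, via a variant of the orbit decomposition of Section~\ref{subsec-orbit-decomposition} adapted to the restricted edge sets $\operatorname{E}(\mathtt U_1;\mathtt W_1)$ and $\operatorname{E}(\mathtt U_2;\mathtt W_2)$. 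Per-cycle Bernoulli second moment estimates (in the spirit of the Bernoulli computations behind Lemma~\ref{lem-tail-prob-conditioning}, cf.\ also \cite{WXY22,DD23a}) show that the resulting product is at most $\exp(O(n))$ uniformly in $(\pi_1,\pi_2)$, using $nps^{2}=\lambda=O(1)$. Averaging over $\pi_1,\pi_2$ preserves this bound. Combining \textbf{(A)} and \textbf{(B)} yields the proposition with $C_1=2C_0+C_2$.

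The main obstacle is the arithmetic of step \textbf{(B)}: because the edge sets are restricted to those incident to $\mathtt U_1$ (resp.\ $\mathtt U_2$), one must classify edges according to the types introduced in Section~\ref{subsec-orbit-decomposition-conditioning} (both endpoints in $\mathtt U_1$, one endpoint in $\mathtt W_1$, chains versus cycles, etc.) and verify that each class contributes at most $\exp(O(n))$ to the chi-squared product. Step \textbf{(A)} is comparatively straightforward once the $\sigma$-algebra on which $\mathcal A_1$ is defined is made explicit. The overall structure mirrors the $\exp(O(n))$-contiguity statement alluded to in the introduction, and once the book-keeping is in place, the calculations fit within the framework already developed in \cite{WXY22,DD23a,DD23b}.
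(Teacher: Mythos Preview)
Your step \textbf{(B)} is the gap. Dropping the indicator $\mathbf{1}\{\mathcal A\}$ and bounding the \emph{unconditional} second moment $\mathbb E_{\mathcal Q}[(d\mathcal P/d\mathcal Q)^2]$ by $\exp(O(n))$ does not work in this regime. To see why, look at the diagonal term $\pi_1=\pi_2$ in your expansion. For a fixed $\pi$, the ratio $d\mathcal P_\pi/d\mathcal Q$ factors over edges in $\operatorname{E}(\mathtt U_1;\mathtt W_1)$, and a direct computation gives a per-edge $\chi^2$-contribution of $s^2(1-p)^2/(1-ps)^2\approx s^2$. There are $\Theta(n|\mathtt U_1|)=\Theta(n^2)$ such edges (recall $|\mathtt U_1|\ge\varepsilon n$), so
\[
\mathbb E_{\mathcal Q}\Big[\Big(\tfrac{d\mathcal P_\pi}{d\mathcal Q}\Big)^{\!2}\Big]=\exp\big(\Theta(s^2 n^2)\big)=\exp\big(\Theta(\lambda n/p)\big)=\exp\big(\Theta(n^{1+\alpha+o(1)})\big).
\]
Averaging over $\pi_1,\pi_2$ only costs a factor $1/|\mathtt U_1|!=\exp(-\Theta(n\log n))$ on this event, which for any $\alpha>0$ is overwhelmed by $\exp(\Theta(n^{1+\alpha}))$. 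Hence the unconditional second moment is $\exp(\omega(n))$, not $\exp(O(n))$, and your claimed uniform-in-$(\pi_1,\pi_2)$ bound is false.

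This is precisely why the paper \emph{retains} the truncation. The argument (following \cite{DD23a}) keeps $\mathbf{1}\{(\pi,G_1^{\mathtt U_1},G_2^{\mathtt U_2})\in\mathcal A\}$ inside the sum, passes via \cite[Lemma~3.2]{DD23a} to a bound of the form $p^{-|\mathcal J_\pi|}$ where $\mathcal J_\pi$ collects edge orbits of $\pi^*\circ\pi^{-1}$ lying entirely in $\mathcal H_{\pi^*}$, and then uses the density constraint \eqref{eq-density-upper-bound} (part of $\mathcal A$) together with admissibility to control $|\mathcal J_\pi|$. The point is that on $\mathcal A$ every subgraph of $\mathcal H_{\pi^*}(\mathtt U_1;\mathtt W_1)$ has edge-to-vertex ratio at most $\alpha^{-1}-\varepsilon$, so $p^{-|\mathcal J_\pi|}\le n^{(1-\alpha\varepsilon+o(1))\cdot(\#\text{vertices in }\mathcal J_\pi)}$, and the averaging over $\pi$ then wins. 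Your step \textbf{(A)} is fine, but \textbf{(B)} needs to be redone with the indicator kept; without the density condition from $\mathcal A$ there is no $\exp(O(n))$ bound available.
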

	%\begin{remark}
	%	The weird-looking bound $\exp(O(n\log\log n))$ is due to technical reasons specific to the case $\alpha=1$. Indeed, for any $\alpha<1$, we can prove a natural $\exp(O(n))$ upper bound for the left hand side of \eqref{eq-dp/dq-expO(n)}, which basically arises from the conditioning of $\mathcal A$, an exponentially unlikely event.
	%\end{remark}
	
	\begin{proposition}\label{prop-posterior-bound-under-Q}
		Let $(G_1^{\mathtt U_1},G_2^{\mathtt U_2})\sim \mathcal Q_{\mathtt U_1,\mathtt U_2}$, then for large enough $n$, it holds with probability at most $\exp\big(-n\sqrt{\log n}\big)$ that
		\begin{equation}\label{eq-posterior-under-Q}	\sup_{\sigma_0\in \operatorname{B}(\mathtt U_1,\mathtt U_2)}\sum_{\substack{\sigma\in \operatorname{B}(\mathtt U_1,\mathtt U_2)\\\operatorname{ov}(\sigma_0,\sigma)\ge \varepsilon n}}\frac{\mathcal P_{\mathtt U_1,\mathtt U_2}^{\ddagger,*}[\pi^*\mid_{\mathtt U_1}=\sigma,G_1^{\mathtt U_1},G_2^{\mathtt U_2}]}{{\mathcal Q}_{\mathtt U_1,\mathtt U_2}[G_1^{\mathtt U_1},G_2^{\mathtt  U_2}]}\ge \exp(-2n)\,.
		\end{equation}
	\end{proposition}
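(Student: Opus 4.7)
My plan is to mirror the impossibility arguments in \cite{DD23b}: I will express the posterior-to-null ratio explicitly, reduce to bounding a truncated combinatorial sum, and control this sum via an orbit decomposition under the simpler independent-edge law $\mathcal{Q}$, paralleling Sections~\ref{subsec-orbit-decomposition-conditioning}--\ref{subsec-proof-or-reduction}.

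The first step is to exhibit the likelihood ratio. Under $\mathcal{Q}$, each edge in $\operatorname{E}(\mathtt{U}_1;\mathtt{W}_1)$ is an independent Bernoulli$(ps)$ and similarly for $\operatorname{E}(\mathtt{U}_2;\mathtt{W}_2)$. Under $\mathcal{P}^{\dagger,*}$ conditioned on $\pi^*\mid_{\mathtt{U}_1} = \sigma$, each pair $(i,j) \in \operatorname{E}(\mathtt{U}_1;\mathtt{W}_1)$ and its $\sigma$-image in $\operatorname{E}(\mathtt{U}_2;\mathtt{W}_2)$ are coupled via the sampling indicators $(I,J^1,J^2)$. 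An edge-by-edge factorization yields
\[
\frac{\mathcal{P}^{\ddagger,*}[\pi^*\mid_{\mathtt{U}_1} = \sigma, G_1^{\mathtt{U}_1}, G_2^{\mathtt{U}_2}]}{\mathcal{Q}[G_1^{\mathtt{U}_1}, G_2^{\mathtt{U}_2}]} \le \frac{C(G_1^{\mathtt{U}_1}, G_2^{\mathtt{U}_2})}{|\operatorname{B}(\mathtt{U}_1,\mathtt{U}_2)|\cdot \mathcal{P}^{\dagger,*}[\mathcal{A}_1]} \cdot P^{|\mathcal{E}_\sigma(\mathtt{U}_1;\mathtt{W}_1)|},
\]
where $P = \frac{1-2ps+ps^2}{p(1-s)^2} = n^{\alpha + o(1)}$ is the usual pairing weight and $C$ is a $\sigma$-free prefactor depending only on $|E(G_1^{\mathtt{U}_1})|$ and $|E(G_2^{\mathtt{U}_2})|$. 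A Chernoff estimate for these $\operatorname{Bin}(N, ps)$-distributed edge counts (with means $O(n)$) shows $\log C = O(n)$ outside a $\mathcal{Q}$-event of probability $\exp(-\omega(n\sqrt{\log n}))$, and the appendix's analysis of $\mathcal{A}_1$ will give $\mathcal{P}^{\dagger,*}[\mathcal{A}_1] \ge 1/2$. Hence it suffices to establish
\[
T := \sup_{\sigma_0\in\operatorname{B}(\mathtt{U}_1,\mathtt{U}_2)} \sum_{\sigma : \operatorname{ov}(\sigma_0,\sigma) \ge \varepsilon n} P^{|\mathcal{E}_\sigma(\mathtt{U}_1;\mathtt{W}_1)|} \le |\operatorname{B}(\mathtt{U}_1,\mathtt{U}_2)|\exp(-3n)
\]
with $\mathcal{Q}$-probability at least $1 - \exp(-n\sqrt{\log n})$.

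To bound $T$, I fix $\sigma_0$ and write $\tau = \sigma_0^{-1}\sigma \in \operatorname{S}_{\mathtt{U}_1}$; by assumption $\tau$ has $\ge \varepsilon n$ fixed points. I decompose $\operatorname{E}(\mathtt{U}_1;\mathtt{W}_1)$ by the orbits of $\tau$ into cycles (both endpoints in $\mathtt{U}_1$) and chains (one endpoint in $\mathtt{W}_1$), exactly as in Section~\ref{subsec-orbit-decomposition-conditioning}. Splitting $m_{12}^\sigma := |\mathcal{E}_\sigma(\mathtt{U}_1;\mathtt{W}_1)| = E_s + E_c + E_1 + \cdots + E_L + E_{>L}$ by orbit type, these are independent sums of Bernoulli$((ps)^2)$ indicators under $\mathcal{Q}$, so Chernoff yields tails $\mathcal{Q}[E_k \ge x] \le \exp(-\alpha_k x\log n + O(x))$ analogous to (and indeed simpler than) Lemma~\ref{lem-tail-prob-conditioning}, since here no conditioning on $\mathtt{E}_{k-1}$ is needed. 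The number of $\sigma$'s realizing a prescribed orbit profile $(n_1,\dots,n_L)$ is bounded by a Lemma~\ref{lem-enumeration}-type estimate $\exp((T_0 - n_1 - n_2/2 - \cdots - n_L/L)\log n + O(n))$ with $T_0 = |\mathtt{U}_1|$, and the overlap constraint forces $n_1 \ge \varepsilon n$, providing an enumeration saving of at least $n^{\varepsilon n - o(n)}$. Plugging these tail and enumeration bounds into a union bound over polynomially-many profiles and edge-count targets $\mathbf{x}$, and running the algebraic calculation of Lemma~\ref{lem-algebra} with $n_1\ge \varepsilon n$ in place of $n_1\le (1-\zeta/2)T$, yields a per-$\sigma_0$ tail estimate of the form $\mathcal{Q}[T(\sigma_0)\ge |\operatorname{B}|\exp(-3n)]\le \exp(-\chi n\log n)$ for some $\chi = \chi(\alpha,\lambda,\varepsilon)>0$.

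The principal technical obstacle is producing enough quantitative slack in $\chi$ to survive the final outer union bound over $\sigma_0 \in \operatorname{B}(\mathtt{U}_1,\mathtt{U}_2)$, which costs a factor $|\operatorname{B}| \le \exp(n\log n)$. Achieving this --- following the template in \cite[Section 4]{DD23b} --- requires a careful accounting in the Lemma~\ref{lem-algebra}-type optimization: rather than crudely substituting the constraint $n_1 \ge \varepsilon n$, one exploits both the fact that length-$1$ orbits contribute full weight $1$ (not $1/k$) to the enumeration saving $\sum n_k/k$ and a dual optimization over the Chernoff exponents $\alpha_k$, so that the effective decay rate $\chi$ can be taken large enough (specifically $\chi>2$) for the outer union bound to succeed. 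Granting this calibration, the total failure probability is $\exp((1-\chi/2)n\log n) \le \exp(-n\sqrt{\log n})$ for large $n$, giving the desired result; the remaining computations, though lengthy, are routine bookkeeping mirroring the appendix analysis of \cite{DD23b}.
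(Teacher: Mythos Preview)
Your approach has a genuine gap that cannot be fixed by ``calibration'' of the Lemma~\ref{lem-algebra}-type algebra.

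First, the orbit decomposition you propose---by $\tau=\sigma_0^{-1}\sigma$---has no probabilistic content under $\mathcal Q_{\mathtt U_1,\mathtt U_2}$. The $k$-dependent tail rates $\alpha_k$ in Lemma~\ref{lem-tail-prob-conditioning} come from correlations in $\mathcal P_{\pi^*}$ induced by the true matching (edges in the same $\Sigma$-orbit share $I$-indicators). Under $\mathcal Q$ the graphs $G_1^{\mathtt U_1},G_2^{\mathtt U_2}$ are independent, so every edge indicator in $\mathcal E_\sigma(\mathtt U_1;\mathtt W_1)$ is an i.i.d.\ Bernoulli$((ps)^2)$, and the tail of each $E_k$ is the same regardless of $k$. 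Your decomposition can only help with enumeration, not with probability.

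Second, and more fatally, a first-moment union bound over $\sigma_0\in\operatorname B(\mathtt U_1,\mathtt U_2)$ cannot close. A direct computation gives $\mathbb E_{\mathcal Q}[P^{|\mathcal E_\sigma|}]=\exp(O(n))$ for every fixed $\sigma$, so $\mathbb E_{\mathcal Q}[T(\sigma_0)]\le |\{\sigma:\operatorname{ov}\ge\varepsilon n\}|\cdot\exp(O(n))\le |\operatorname B|\exp(-\varepsilon n\log n+O(n))$. Markov then yields at best $\mathcal Q[T(\sigma_0)\ge |\operatorname B|\exp(-3n)]\le\exp(-(\varepsilon+o(1)) n\log n)$; the only saving comes from the enumeration gain $n_1\ge\varepsilon n$, and there is no additional probabilistic gain to push $\chi$ beyond $\varepsilon$. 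Since $|\operatorname B(\mathtt U_1,\mathtt U_2)|$ can be as large as $\exp(|\mathtt U_1|\log n)$ with $|\mathtt U_1|$ up to $n$, the outer union bound over $\sigma_0$ blows up whenever $|\mathtt U_1|>\varepsilon n$---which is precisely the regime of interest.

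The paper avoids this by a genuinely different mechanism. Rather than fixing $\sigma_0$ on all of $\mathtt U_1$, it passes to a \emph{good set} $A\subset\mathtt U_1$ of sublinear size $K_n\asymp n/\sqrt{\log n}$ (well-separated and far from short cycles in the intersection graph), so the outer union over $(A,\sigma_0|_A)$ costs only $\exp(K_n\log n+O(n))$. It then bounds a \emph{second moment} of the remaining sum under $\mathcal Q$, factoring the contribution from edges inside $A$ (where the density constraint in $\mathcal A$---which you discarded---forces $|\mathcal E_{\sigma_0}(A)|\le(\alpha^{-1}-\varepsilon)K_n$ and yields the crucial $(1-\alpha\varepsilon)K_n\log n$ exponent) against the rest (which is controlled by a truncated likelihood-ratio second-moment computation in the spirit of Proposition~\ref{prop-dp/dq-bound}). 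The net exponent is $-(1+\alpha\varepsilon+o(1))K_n\log n\ll -n\sqrt{\log n}$, as required. Both the good-set reduction and the retention of the indicator $\mathbf 1\{\mathcal A\}$ are essential; your proposal omits both.
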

	
These two propositions can be derived using refinements of arguments in \cite{DD23a, DD23b}, so we defer their proofs to the appendix. Assuming these results, we can then provide the proof of Proposition~\ref{prop-posteior-anti-concentration}, thereby completing the proof of Theorem~\ref{thm-main-ER}-(ii).

	\begin{proof}[Proof of Proposition~\ref{prop-posteior-anti-concentration} assuming the technical inputs]
		Define the event 
		\[
		\mathcal G_{\operatorname{LR}}=\left\{(G_1^{\mathtt U_1},G_2^{\mathtt U_2}):\exp(-n)\le \frac{\mathcal P_{\mathtt U_1,\mathtt U_2}^{\ddagger}(G_1^{\mathtt U_1},G_2^{\mathtt U_2})}{\mathcal Q_{\mathtt U_1,\mathtt U_2}(G_1^{\mathtt U_1},G_2^{\mathtt U_2})} \le\exp((C_1+1)n)\right\}\,.
		\]
		Clearly $\mathbb{E}_{\mathcal P_{\mathtt U_1,\mathtt U_2}^{\ddagger}}\frac{\dif \mathcal Q_{\mathtt U_1,\mathtt U_2}}{\dif \mathcal P_{\mathtt U_1,\mathtt U_2}^{\ddagger}}\le 1$, which in combination with \eqref{eq-dp/dq-expO(n)} and Markov inequality yields that
		\[
		\mathcal P_{\mathtt U_1,\mathtt U_2}^{\ddagger}[\mathcal G_{\operatorname{LR}}]\ge 1-2\exp(-n)=1-o(1)\,.
		\]
		In addition,  Proposition~\ref{prop-posterior-bound-under-Q} yields that
		\[
		\mathcal P_{\mathtt U_1,\mathtt U_2}^{\ddagger}[(G_1^{\mathtt U_1},G_2^{\mathtt U_2})\in \mathcal G_{\operatorname{LR}}, \text{ and }\eqref{eq-posterior-under-Q}\text{ holds}]\le \exp\big((C_1+1)n-n\sqrt{\log n}\big)=o(1)\,.
		\]
		Therefore, with probability $1-o(1)$ over $(G_1^{\mathtt U_1},G_2^{\mathtt U_2})\sim \mathcal P_{\mathtt U_1,\mathtt U_2}^{\ddagger}$, $\mathcal G_{\operatorname{LR}}$ happens while \eqref{eq-posterior-under-Q} fails. This implies that with high probability under $\mathcal P_{\mathtt U_1,\mathtt U_2}^{\ddagger}$, 
		\begin{align*}
			&\ \sup_{\sigma_0\in \operatorname{B}(\mathtt U_1,\mathtt U_2)}\sum_{\substack{\sigma\in \operatorname{B}(\mathtt U_1,\mathtt U_2)\\\operatorname{ov}(\sigma_0,\sigma)\ge \varepsilon n}}\frac{\mathcal P_{\mathtt U_1,\mathtt U_2}^{\ddagger,*}[\pi^*\mid_{\mathtt U_1}=\sigma,G_1^{\mathtt U_1},G_2^{\mathtt U_2}]}{\mathcal P_{\mathtt U_1,\mathtt U_2}^{\ddagger}[G_1^{\mathtt U_1},G_2^{\mathtt  U_2}]}\\
			=&\ \frac{\mathcal Q_{\mathtt U_1,\mathtt U_2}[G_1^{\mathtt U_1},G_2^{\mathtt U_2}]}{\mathcal P_{\mathtt U_1,\mathtt U_2}^{\ddagger}[G_1^{\mathtt U_1},G_2^{\mathtt U_2}]}\cdot  \sup_{\sigma_0\in \operatorname{B}(\mathtt U_1,\mathtt U_2)}\sum_{\substack{\sigma\in \operatorname{B}(\mathtt U_1,\mathtt U_2)\\\operatorname{ov}(\sigma_0,\sigma)\ge \varepsilon n}}\frac{\mathcal P_{\mathtt U_1,\mathtt U_2}^{\ddagger,*}[\pi^*\mid_{\mathtt U_1}=\sigma,G_1^{\mathtt U_1},G_2^{\mathtt U_2}]}{{\mathcal Q}_{\mathtt U_1,\mathtt U_2}[G_1^{\mathtt U_1},G_2^{\mathtt  U_2}]}\\
			\le&\ \exp(n)\cdot  \exp(-2n)=\exp(-n)=o(1)\,.\qedhere
		\end{align*}
	\end{proof}

	\appendix 
	\section{Deferred proofs in Section~\ref{sec-positive}}
	
	\subsection{Proof of Lemma~\ref{lem-D}}
 \begin{proof}[Proof of Lemma~\ref{lem-D}]
 Since $\mathcal H_{\pi^*}\sim \mathbf{G}(n,\frac\lambda n)$, we have for any $U\subset [n]$, the number of edges in $\mathcal H_{\pi^*}$ with at least one vertex in $U$ is stochastically dominated by $\mathbf{B}(n|U|,\frac\lambda n)$. Based on this relation, we take $D=\max\{e^2\lambda, 1+\delta^{-1}+\log(e\lambda\delta^{-2})\}$, and we prove \eqref{eq-event-D} via taking union bounds. 

On the one hand, for any $U\subset [n]$ satisfies $2e\lambda|U|\le \delta e^{-2/\delta}$, it follows by the Chernoff bound the probability that there are more than $\delta n/2$ edges connecting $U$ is bounded by
 \[
\exp\left(-\frac{\delta n}{2}\log\Big(\frac{\delta n}{2\lambda|U|}\Big)+\delta n\right)=\exp\left(-\frac{\delta n}{2}\log\Big(\frac{\delta n}{2e\lambda |U|}\Big)\right)\le \exp(-n)\,.
 \]
Hence, a union bound yields that with high probability this scenario does not happen for any $U$ with $2e\lambda |U|\le \delta e^{-2/\delta}n$.
 
 On the other hand, for any $U\subset [n]$ with $|U|=k$ and $2e\lambda k\ge \delta e^{-2/\delta}n$, by the Chernoff bound again the probability $\mathcal H_{\pi^*}$ has more than $D|U|$ edges connecting $U$ is bounded by
 \[
\exp\left(-Dk\log\Big(\frac{Dk}{e\lambda k}\Big)\right)=\exp\left(-D\log\Big(\frac{D}{e\lambda}\Big)k\right)\,.
 \]
 Since there are $\binom{n}{k}\le \exp(k\log(n/k)+k)\le \exp((1+\delta^{-1}+\log(e\lambda\delta^{-1})k)$ many $U$ with size $k$, a union bound yields with high probability there is no $U$ with $2e\lambda |U|\ge \delta e^{-2/\delta}n$ such that there are at least $D|U|$ edges connecting $U$. 
 Altogether we conclude that \eqref{eq-event-D} holds with high probability.% completes the proof.  
 \end{proof}
	
	\subsection{Proof of Lemma~\ref{lem-tail-prob-conditioning}}
	In this subsection we prove Lemma~\ref{lem-tail-prob-conditioning}. %We will derive these tail probability upper bounds by first calculating the exponential moments, and then apply Chebyshev's inequality. 
	We start with the definition of the types of random variables that serve as basic components of $E_{\operatorname{s}},E_k,E_{\operatorname{c}}^{\operatorname{c}}$ and $E_{\operatorname{c}}^{\operatorname{f}}$. Fix an integer $k\ge 1$, let $I_0,\dots,I_k, J_0^1,\\\dots,J_{k-1}^1,J_1^2,\dots,J_{k}^2$ be indicators, and write
	\[
	S_k=I_0J_0^1I_1J_1^2+I_1J_1^1I_2J_2^2+\cdots+I_{k-1}J_{k-1}^1I_kJ_k^2=\sum_{t=1}^{k}I_{t-1}J_{t-1}^1I_tJ_t^2\,.
	\]
	Define random variables $X_k,Y_k,Z_k$ as follows: 
	\begin{itemize}
 		\item ($k$-cycles) $X_k$ has the same distribution of $S_k$, where $I_0=I_k$, and $I_0,\dots,I_{k-1}\sim \mathbf B(1,p)$, $J_0^1,\dots,J_{k-1}^1,J_1^2,\dots,J_k^2\sim \mathbf B(1,s)$ are independent Bernoulli indicators.
		\item (Free $k$-chain) $Y_k$ has the same distribution of $S_k$, where $I_0,\dots,I_k\sim \mathbf B(1,p)$ and $J_0^1,\dots,J_{k-1}^1$,\\$J_1^2,\dots,J_{k}^2\sim \mathbf B(1,s)$ are independent Bernoulli indicators.
		\item (Confined $k$-chain) $Z_k$ has the same distribution as $S_k$, where $I_0=I_k=1$, $I_1,\dots,I_{k-1}\sim \mathbf B(1,p),J_0^1,\dots,J_{k-1}^1,J_1^2,\dots,J_k^2\sim \mathbf B(1,s)$ are independent Bernoulli indicators. 
	\end{itemize}
It is straightforward to check that each of $E_{\operatorname{s}},E_k,E_{\operatorname{c}}^{\operatorname{c}}$ and $E_{\operatorname{c}}^{\operatorname{f}}$ is an independent sum of random variables that are either distributed like or stochastically dominated by one of the above three types of variables. The following lemma provides an upper bound for their exponential moments.

	\begin{lemma}\label{lem-exponential-moments}
		For any $\theta>0$ such that $e^\theta\ll n$, we write $\nu=e^\eta-1$, and let $\mu_1>\mu_2>0$ be the two roots of the polynomial $x^2-(1+ps^2\nu)x+p(1-p)s^2$. Then for any $k\ge 1$, it holds that
  		\begin{align}
			\label{eq-exp-X}
			\mathbb{E}\exp(\theta X_k)\le&\ \mu_1^k+\mu_2^k\,,\\
			\label{eq-exp-Y}
			\mathbb{E}\exp(\theta Y_k)\le&\ \mu_1^k+e^\theta n^{-1-2\alpha+o(1)}\mu_2^k\,,\\
			\label{eq-exp-Z}
			\mathbb{E}\exp(\theta Z_k)\le&\ (1+3ps^2\nu)\mu_1^k\,.
		\end{align}
	\end{lemma}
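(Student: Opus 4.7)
The plan is to reduce every bound to a $2\times 2$ transfer-matrix computation. The starting observation is that each summand $\eta_t := I_{t-1}J_{t-1}^1 I_t J_t^2$ is $\{0,1\}$-valued, so $e^{\theta\eta_t} = 1 + \nu \eta_t$ and $e^{\theta S_k} = \prod_{t=1}^k (1 + \nu\, I_{t-1} J_{t-1}^1 I_t J_t^2)$. Because the $J$-indicators appearing in different summands are pairwise distinct, conditioning on $(I_0,\ldots,I_k)$ and integrating the $J$'s out gives
\[
\mathbb{E}\bigl[e^{\theta S_k}\mid I_0,\ldots,I_k\bigr] \;=\; \prod_{t=1}^k \bigl(1 + \nu s^2 I_{t-1}I_t\bigr),
\]
which I would encode through the $2\times 2$ transfer matrix
\[
A \;:=\; \begin{pmatrix} 1-p & p \\ 1-p & p(1+\nu s^2) \end{pmatrix},
\]
whose characteristic polynomial is verified to have exactly $\mu_1 > \mu_2 > 0$ as its (real, positive) roots.

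The three cases then specialize cleanly: for the cycle ($I_0 = I_k$), $\mathbb{E}[e^{\theta X_k}] = \operatorname{Tr}(A^k) = \mu_1^k + \mu_2^k$, giving \eqref{eq-exp-X} immediately; for the free chain (all $I_j$'s i.i.d.\ Ber($p$)), $\mathbb{E}[e^{\theta Y_k}] = d^\top A^k \vec 1$ with $d = (1-p,p)^\top$ and $\vec 1 = (1,1)^\top$; and for the confined chain ($I_0 = I_k = 1$), $\mathbb{E}[e^{\theta Z_k}] = (A^{k-1}\widetilde T)_{11}$ with $\widetilde T_{xy} = 1 + \nu s^2 xy$. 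Using the explicit right and left eigenvectors $v_{\mu_i} = (1,(\mu_i - 1 + p)/p)^\top$ and $u_{\mu_i} = (1,(\mu_i - 1 + p)/(1-p))^\top$ I would then decompose $A^k = \mu_1^k P_1 + \mu_2^k P_2$ via the rank-one projectors $P_i = v_{\mu_i} u_{\mu_i}^\top/(u_{\mu_i}^\top v_{\mu_i})$, reducing each expectation to a closed-form $c_1 \mu_1^k + c_2 \mu_2^k$ expression.

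The final step is to bound these spectral coefficients. For $Y_k$, the algebraic identity $d^\top v_{\mu_i} = \mu_i$ (an immediate consequence of expanding the eigenequation $Av_{\mu_i} = \mu_i v_{\mu_i}$ in the first coordinate) lets me rewrite $d^\top P_i \vec 1 = \mu_i^2/[(1-p)+p\beta_i^2]$ with $\beta_i := (\mu_i - 1 + p)/p$; the $i=1$ case is $\leq 1$ via the tautology $(1-p)(\mu_1-1)^2\geq 0$, while the $i=2$ case, combined with the asymptotics $\mu_2 = \Theta(\nu ps^2)$, $\beta_2 \sim -1/p$ and the parameter relations $p = n^{-\alpha+o(1)}$ and $nps^2 = \lambda$, yields the advertised $e^\theta n^{-1-2\alpha+o(1)}$ rate. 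For $Z_k$, the $\mu_1^{k-1}$-coefficient works out to $(P_1)_{10} + (1+\nu s^2)(P_1)_{11}$, which is bounded by $(1+3ps^2\nu)\mu_1$ using the eigenvalue identities $\mu_1+\mu_2 = 1+\nu ps^2$ and $\mu_1\mu_2 = \nu p(1-p)s^2$; the $\mu_2^{k-1}$-term turns out to have the same sign but strictly smaller order, so it is absorbed into the stated constant. The delicate step I foresee is the $\mu_2$-coefficient for $Y_k$: obtaining the sharp exponent $n^{-1-2\alpha+o(1)}$ (rather than the cruder $n^{-1-\alpha+o(1)}$ one would get from the naive inequality $\mu_2 \leq \sqrt{\mu_1\mu_2}$) relies on the quadratic vanishing $d^\top v_{\mu_2} = \mu_2$, which reflects the fact that $d$ is the invariant distribution of the uncorrelated matrix $A|_{\nu=0}$. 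Keeping this cancellation alive through the spectral projector is the one step where constants must be tracked with care.
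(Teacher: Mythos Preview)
Your transfer-matrix setup is exactly what underlies the paper's argument (the paper defers this structural part to \cite[Proposition~2.3]{DD23b}); the trace identity for $X_k$ and the spectral computation for $Y_k$ match the paper, and your key observation $d^\top v_{\mu_i}=\mu_i$ is precisely the cancellation used there.

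For $Z_k$ the paper takes a different final step, and yours has a gap. The paper does not argue by ``absorption'': it evaluates $\mathbb{E}e^{\theta Z_k}$ directly at $k=1,2$, solves the resulting $2\times 2$ linear system for the coefficients in $c_1\mu_1^k+c_2\mu_2^k$, and then shows via the asymptotics $\mu_1=1+p^2s^2\nu+O(\cdots)$, $\mu_2=ps^2\nu+O(\cdots)$ that $c_1\le 1+3ps^2\nu$ and, crucially, $c_2\le 0$. Dropping the $\mu_2$-term then gives \eqref{eq-exp-Z} immediately. Your claim that the $\mu_2^{k-1}$-term is positive and can be ``absorbed into the stated constant'' is not sufficient: under the literal definition with both endpoints $I_0=I_k=1$, one computes $\mathbb{E}e^{\theta Z_1}=1+s^2\nu$, which exceeds $(1+3ps^2\nu)\mu_1\approx 1+3ps^2\nu$ whenever $p<1/3$, so absorption fails at $k=1$. (Indeed, the paper's own computation plugs in $A_1=\mathbb{E}e^{\theta Z_1}=1+ps^2\nu$, which corresponds to only one endpoint being conditioned; under that convention your projector calculation would also yield $c_2\le 0$.) You should determine the sign of $c_2$ explicitly, as the paper does, rather than rely on an order-of-magnitude absorption argument.
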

	\begin{proof}
		As argued in the proof of \cite[Proposition 2.3]{DD23b}, the $\theta$-exponential moment of each of these variables can be expressed as a linear combinations of $\mu_1^k$ and $\mu_2^k$ with coefficients does not depend on $k$, so it remains to estimate the coefficients.  \eqref{eq-exp-X} and \eqref{eq-exp-Y} have already been proved in \cite[Proposition 2.3]{DD23b} (and \eqref{eq-exp-X} is actually an identity). For \eqref{eq-exp-Z}, we assume that for two constants $c_1,c_2$, it holds
		\[
		\mathbb{E}\exp(\theta Z_k)=c_1\mu_1^k+c_2\mu_2^k,\ k=1,2,\dots.
		\]
		By calculating the left hand side for $k=1,2$ directly, we conclude that
		\[
		c_1\mu_1+c_2\mu_2=A_1\equiv 1+ps^2\nu\,,\quad c_1\mu_1^2+c_2\mu_2^2=1-2ps^2\nu+p^2s^4\nu^2\,.
		\]
		Solving the linear system and using the asymptotics that $\mu_1=1+p^2s^2\nu+O(p^3s^4\nu)$ and $\mu_2=ps^2\nu+O(p^2s^2\nu)$ (see \cite[Section A.1]{DD23b} for details), we get (noticing that $e^\theta\ll n$ implies $ps^2\nu=o(1)$)
		\begin{align*}
			c_1&=\frac{A_2-\mu_2A_1}{\mu_1(\mu_1-\mu_2)}=\frac{1+2ps^2\nu+p^2s^4\nu^2-(1+ps^2\nu)(ps^2\nu+O(p^2s^2\nu))}{(1+p^2s^2\nu+O(p^3s^4\nu))(1-ps^2\nu+O(p^2s^2\nu))}\\
			&=\frac{1+ps^2\nu+p^2s^4\nu^2+O(p^2s^2\nu)}{1-ps^2\nu+O(p^2s^2\nu)}\le \frac{1+(1+o(1))ps^2\nu}{1-(1+o(1))ps^2\nu}\le 1+3ps^2\nu\,,
		\end{align*}
		and 
		\begin{align*}
			c_2&=\frac{\mu_1A_1-A_2}{\mu_2(\mu_1-\mu_2)}=\frac{(1+ps^2\nu)(1+p^2s^2\nu+O(p^3s^2\nu))-(1+2ps^2\nu+p^2s^4\nu^2)}{(ps^2\nu+O(p^2s^2\nu))(1-ps^2\nu+O(p^2s^2\nu))}\\&=\frac{(1+ps^2\mu+O(p^2s^2\nu))-(1+2ps^2\nu+p^2s^4\nu^2)}{(1+o(1))ps^2\nu}=\frac{-(1+o(1))ps^2\nu}{(1+o(1))ps^2\nu}\le 0\,,
		\end{align*}
		as desired. 
	\end{proof}

 Now we prove Lemma~\ref{lem-tail-prob-conditioning}. 
 \begin{proof}[Proof of Lemma~\ref{lem-tail-prob-conditioning}]
    \eqref{eq-tail-Es}, \eqref{eq-tail-Ek}, \eqref{eq-tail-E>L} can be proved similarly as in \cite[Lemma 2.4]{DD23b} given \eqref{eq-exp-X}. It remains to prove \eqref{eq-tail-Ec}. Since $E_{\operatorname{c}}=E_{\operatorname{c}}^{\operatorname{f}}+E_{\operatorname{c}}^{\operatorname{c}}$, it suffices to prove that for any $0\le x\le O(n)$, 
    \begin{equation}\label{eq-tail-Ecf}
    \mathcal P_{\pi^*,\mathtt{E}_{k-1}}[E_{\operatorname{c}}^{\operatorname{f}}\ge x]\le \exp(o(n\log n)-\alpha x\log n)\,,
    \end{equation}
    and
    \begin{equation}\label{eq-tail-Ecc}
    \mathcal P_{\pi^*,\mathtt{E}_{k-1}}[E_{\operatorname{c}}^{\operatorname{c}}\ge x]\le \exp(o(n\log n)-\alpha x\log n)\,.
    \end{equation}
    Denote $T_k^{\operatorname{f}}$ (resp. $T_k^{\operatorname{c}}$) as the number of free $k$-chain (resp. confined $k$-chains) in $\mathcal O_{\check\pi}(\mathtt U_{k-1},U)$. We note that the contributions to $E_{\operatorname{c}}^{\operatorname{f}}$ (resp. $E_{\operatorname{c}}^{\operatorname{c}}$) from distinct free $k$-chains (resp. confined $k$-chains) are independent and distribute like $Y_k$ (resp. are stochastically dominated by $Z_k$). It follows from Chebyshev's inequality that for any $\theta>0$, 
    \[
    \mathcal P_{\pi^*,\mathtt{E}_{k-1}}[E_{\operatorname{c}}^{\operatorname{f}}\ge x]\le e^{-\theta x}\mathbb{E}e^{\theta E_{\operatorname{c}}^{\operatorname{f}}}=e^{-\theta x}\prod_{k}(\mathbb{E}e^{\theta Y_k})^{T_k^{\operatorname{f}}}\,.
    \]
    We pick $\theta=(\alpha+o(1))\log n$ such that $\mu_1=1+(n\log n)^{-1}$ (this ensures $e^\theta\ll n$). Using \eqref{eq-exp-Y}, we see the left hand side of \eqref{eq-tail-Ecf} is upper-bounded by
    \begin{align*}
    e^{-\theta x}\prod_{k}(\mu_1^{k}+n^{-1-\alpha+o(1)}\mu_2^k)^{T_k^{\operatorname{f}}}\le e^{-(\alpha+o(1))x\log n}\prod_k\mu_1^{kT_k^{\operatorname{f}}}(1+n^{-1-\alpha+o(1)}\mu_2^k)^{T_k^{\operatorname{f}}}\,.%(1+n^{-2+o(1)})^{kT_k^{\operatorname{f}}}=\exp(o(n\log n)-\alpha x\log n)
    \end{align*}
    Since $\mu_1=1+(n\log n)^{-1}$, $1+n^{-1-\alpha+o(1)}\mu_2^k\le 1+n^{-1-\alpha+o(1)}\mu_2=1+n^{-2+o(1)}$, and $\sum_{k}kT_k^{\operatorname{f}}\le n^2$, the above is $\exp(o(n\log n)-\alpha x\log n)$ for $x=O(n)$. This proves \eqref{eq-tail-Ecf}. 
    
    Similarly, by Chebyshev's inequality we have for any $\theta>0$, 
    \[
    \mathcal P_{\pi^*,\mathtt{E}_{k-1}}[E_{\operatorname{c}}^{\operatorname{c}}\ge x]\le e^{-\theta x}\mathbb{E}e^{\theta E_{\operatorname{c}}^{\operatorname{f}}}\le e^{-\theta x}\prod_{k}(\mathbb{E}e^{\theta Z_k})^{T_k^{\operatorname{c}}}\,.
    \]
   Again, pick $\theta=(\alpha+o(1))\log n$ such that $\mu_1=1+(n\log n)^{-1}$. Using \eqref{eq-exp-Z}, the left hand side of \eqref{eq-tail-Ecc} is bounded by
    \[
e^{-\theta x}\prod_{k}\big((1+3ps^2\nu)\mu_1^{k}\big)^{T_k^{\operatorname{c}}}\le e^{-(\alpha+o(1))x\log n}\prod_k (1+n^{-1+\alpha+o(1)})^{T_k^{\operatorname{c}}}(1+(n\log n)^{-1})^{kT_k^{\operatorname{c}}}\,.
    \]
    Since each confined cycle contains at least one edge in $\mathtt E_{k-1}$ while there are only $O(n)$ edges in $\mathtt E_{k-1}$ (by the goodness of the realization $\mathcal F_{k-1}$), we conclude $\sum_k T_k=O(n)$. Also we have $\sum_k kT_k^{\operatorname{c}}=O(n^2)$, so the right hand side of the above expression is $\exp(o(n\log n)-\alpha x\log n)$ for $x=O(n)$.
    This verifies \eqref{eq-tail-Ecc}, thereby concluding the proof. 
 \end{proof}
	
	\subsection{Proof of Lemma~\ref{lem-enumeration}}
 \begin{proof}[Proof of Lemma~\ref{lem-enumeration}]
     Fix the realization of $\pi^*$ and $\tilde{\pi}$. Consider the set $\overline{\operatorname{S}}_{\mathtt U_{k-1},\tilde{\pi}}(n_1,\dots,n_L)$ of permutations $\pi\in \operatorname{S}_n$ that equal to $\tilde\pi$ on $\mathtt{U}_{k-1}$, and $\pi^*\circ \pi^{-1}$ has at least $n_t$ vertices in $[n]\setminus \mathtt{U}_{k-1}$ lying in cycles with length $t$. Note that each partial matching $\check\pi$ in $\operatorname{S}_{\mathtt U_{k-1},U,\tilde{\pi}}(n_1,\dots,n_L)$ extends to $(n-|U|)!$ many permutations in $\overline{\operatorname{S}}_{\mathtt U_{k-1},\tilde{\pi}}(n_1,\dots,n_L)$ (and these permutations are distinct for different $\check\pi$), so we have
     \[
     (n-|U|)!\cdot |{\operatorname{S}}_{\mathtt U_{k-1},U,\tilde{\pi}}(n_1,\dots,n_L)|\le |\overline{\operatorname{S}}_{\mathtt U_{k-1},\tilde{\pi}}(n_1,\dots,n_L)|\,.
     \]
     
     To bound the size of $\overline{\operatorname{S}}_{\mathtt U_{k-1},\tilde{\pi}}(n_1,\dots,n_L)$, we consider the following mapping from the set $\overline{\operatorname{S}}_{\mathtt U_{k-1},\tilde{\pi}}(n_1,\dots,n_L)$ to the set of permutations on $[n]\setminus \mathtt{U}_{k-1}$: for ${\pi}\in \overline{\operatorname{S}}_{\mathtt U_{k-1},\tilde{\pi}}(n_1,\dots,n_L)$, define $\sigma:[n]\setminus \mathtt U_{k-1}\to [n]\setminus \mathtt U_{k-1}$ by letting $\sigma(i)=(\pi^*\circ \pi^{-1})^{(k)}(i)$ for each $i\in [n]\setminus \mathtt U_{k-1}$, where $k=k(i)$ is the minimal positive integer such that $(\pi^*\circ \pi^{-1})^{(k)}(i)\in [n]\setminus \mathtt U_{k-1}$. It is easy to check $\pi\mapsto \sigma$ is injective (as $\pi\mid_{\mathtt U_{k-1}}=\tilde\pi$ is given). Furthermore, because they mapping only shortens cycles, each $\sigma$ in the image of such a mapping satisfies the following: for $1\le t\le L$, there are at least $n_1+\cdots+n_t$ vertices lying in cycles with length at most $t$. It follows from standard fact about random permutations (see, e.g. \cite[Appendix D]{WXY22}) that the number of permutations on $n-|\mathtt U_{k-1}|$ elements with this property is at most
     \[
\exp((n-|\mathtt U_{k-1}|-n_1-\frac{n_2}{2}-\cdots-\frac{n_L}{L})\log n+O(n))\,.
     \]
     In conclusion, we obtain that (since $(n-|U|)!=\exp((n-|U|)\log n+O(n))$)
     \[
|{\operatorname{S}}_{\mathtt U_{k-1},U,\tilde{\pi}}(n_1,\dots,n_L)|\le \exp((T-n_1-\frac{n_2}{2}-\cdots-\frac{n_L}{L})\log n+O(n))\,.\qedhere
     \]
 \end{proof}
	
	\subsection{Proof of Lemma~\ref{lem-algebra}}
	\begin{proof}[Proof of Lemma~\ref{lem-algebra}]
		For any $n_1,\dots,n_L$ and $\mathbf{x}=(x_t)_{-1\le t\le L+1}\in \Delta(n_1,\dots,n_L)$, we have that 
		\begin{align*}
			&\ x_{-1}+\alpha x_0+\sum_{t=1}^L \alpha_tx_t+\alpha_{L+1}x_{L+1}\\
			=&\ (1-\alpha_{L+1})x_{-1}+(\alpha-\alpha_{L+1})x_0+\alpha_{L+1}\sum_{t=-1}^Lx_t-\sum_{t=1}^L(\alpha_{t+1}-\alpha_t)\sum_{s=1}^tx_s\\
			\ge&\ 0+0+ \alpha_{L+1}\cdot l_k T-\sum_{t=1}^L(\alpha_{t+1}-\alpha_t)u_k\sum_{s=1}^t n_s\\
			=&\ \alpha_{L+1}\cdot (l_kT-u_k\sum_{t=1}^L n_t)+u_k\sum_{t=1}^L\alpha_tn_t\,.
		\end{align*}
		Therefore, the left-hand side of \eqref{eq-algebra} is bounded from above by
		\begin{align*}
			&\ T-\sum_{t=1}^L\frac{n_t}{t}-\alpha_{L+1}(l_kT-u_k\sum_{t=1}^L n_t)-u_k\sum_{t=1}^L\alpha_tn_t\\
			=&\  (1-\alpha_{L+1}l_k)T+\sum_{t=1}^L(\alpha_{L+1}u_k-\alpha_tu_k-t^{-1})n_t\,.
		\end{align*}
		Recalling $\alpha_t=\frac{t-1}{t},\forall 1\le t\le L$, this can also be written as
		\begin{align}\nonumber &\ (1-\alpha_{L+1}l_k)T+\sum_{t=1}^L(\alpha_{L+1}u_k-1)n_t-\sum_{t=1}^L(u_k-1)\alpha_tn_t\\
			\le&\ (1-\alpha_{L+1}l_k)T+\sum_{t=1}^L(\alpha_{L+1}u_k-1)n_t-\frac{u_k-1}{2}\sum_{t=2}^Ln_t\,.\label{eq-algebra-upper-bound}
		\end{align}
		Note that $u_k\ge \alpha^{-1}+\varepsilon$ and thus $u_k\alpha_{L+1}-1\ge \alpha\varepsilon/2$ by our choice of $L$ as in \eqref{eq-L}. We divide into two cases:\\
		\noindent Case 1: If $n_1+\cdots+n_L\le (1-\zeta/4)T$, then \eqref{eq-algebra-upper-bound} is bounded by
		\begin{align*}
			\big[(1-\alpha_{L+1}l_k)+(\alpha_{L+1}u_k-1)(1-\zeta/4)\big]\cdot T=&\  \big[\alpha_{L+1}(u_k-l_k)-(\alpha_{L+1}u_k-1)\cdot \zeta/4\big]\cdot T\\
			(\text{since }u_k-l_k\le \eta,\alpha_{L+1}\le 1)\quad\le&\ (\eta-\alpha\varepsilon\zeta/8)\cdot T\\
			(\text{since }T\ge N^{-1}\eta n)\quad\quad\qquad\quad\ \le &\ -N^{-1}\eta(\alpha\varepsilon\zeta/8-\eta)n
		\end{align*}
		\noindent Case 2: $n_1+\cdots+n_L\ge (1-\zeta/4)T$, then since $n_1\le (1-\zeta/2)T$, we have $n_2+\cdots+n_L\ge\zeta /4\cdot T$. As a result, \eqref{eq-algebra-upper-bound} is upper-bounded by
		\[
		(1-\alpha_{L+1}l_k)T+(\alpha_{L+1}u_k-1)T-\frac{u_k-1}{2}\cdot \frac{\zeta}{4}\cdot T\le (\eta-\varepsilon\zeta/8)T\le -N^{-1}\eta\cdot (\varepsilon\zeta/8-\eta)n\,.
		\] 
		In conclusion, we may take $\chi=N^{-1}\eta(\alpha\varepsilon\zeta/8-\eta)>0$ and \eqref{eq-algebra} holds true, as desired
	\end{proof}
	
	\section{Deferred proofs in Section~\ref{sec-negative}}
	In the remaining of the paper we provide the proof of the two main technical inputs for the negative result. Fix $\alpha\in (0,1], \varepsilon>0$, as well as two subsets $\mathtt U_1,\mathtt U_2$ of $[n]$. Write $\mathtt W_i=[n]\setminus \mathtt{U}_i,i=1,2$. We pick a large integer $C$ such that
	\begin{equation}\label{eq-C}
		\alpha(\alpha^{-1}-\varepsilon+C^{-1})<1\,.
	\end{equation}
	Recalling the definition of $\mathcal P_{\mathtt U_1,\mathtt U_2}^{\dagger,*}$ and $\mathcal P_{\mathtt U_1,\mathtt U_2}^{\ddagger,*}$, we start by defining the event $\mathcal A_1$.
	\begin{definition}\label{def-admissible}
		For a graph $\mathcal H=([n],\mathcal E)$, we say it is admissible if the following hold:\\
		\noindent (i) $\mathcal H$ has maximal degree no more than $\log n$;\\
		\noindent (ii) For $D_n\equiv (\log n)^{1/10C}$, there are at least $(1-\exp(-D_n))n$ many $i\in [n]$ such that no vertex in the $(2C+2)$-neighborhood of $i$ in $\mathcal H$ has degree greater than $D_n$.\\
		\noindent (iii) Any connected subgraph of $\mathcal H$ with size less than $\log\log n$ contains at most one cycle.\\
		\noindent (iv) For $k\ge 3$, the number of cycles in $\mathcal H$ with length $k$ is no more than $(\log n)^k$. 
	\end{definition} 
	Let $(\pi^*,G_1^{\mathtt U_1},G_2^{\mathtt U_2})\sim \mathcal P_{\mathtt U_1,\mathtt U_2}^{\dagger,*}$, and recall that $\mathcal H_{\pi^*}(\mathtt U_1;\mathtt W_1)$ be the $\pi^*$-intersection graph of $G_1^{\mathtt U_1}$ and $G_2^{\mathtt U_2}$. We denote $\mathcal A_1$ for the event that $\mathcal H_{\pi^*}(\mathtt U_1;\mathtt W_1)$ is admissible. As promised before, we show $\mathcal A_1$ is a typical event under $\mathcal P_{\mathtt U_1,\mathtt U_2}^{\dagger,*}$. 
	
	\begin{lemma}
		We have $\mathcal P_{\mathtt U_1,\mathtt U_2}^{\dagger,*}[\mathcal A_1]=1-o(1)$. 
	\end{lemma}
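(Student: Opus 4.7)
The plan is to use the FKG inequality to reduce the claim to its unconditional analogue, and then verify each of the four admissibility properties by standard first-moment estimates on sparse Erd\H{o}s--R\'enyi graphs. The key starting observation is that, by the $I,J^1,J^2$ representation in Section~\ref{subsec-notation}, an edge $(i,j) \in \operatorname{E}(\mathtt{U}_1;\mathtt{W}_1)$ lies in $\mathcal{H}_{\pi^*}(\mathtt{U}_1;\mathtt{W}_1)$ exactly when $I_{i,j} = J^1_{i,j} = J^2_{\pi^*(i),\pi^*(j)} = 1$. Since distinct edges in $\operatorname{E}(\mathtt{U}_1;\mathtt{W}_1)$ are mapped to distinct pairs by $\pi^*$, these indicators are mutually independent Bernoulli$(ps^2)=$Bernoulli$(\lambda/n)$. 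Both $\mathcal{A}_1$ and the conditioning event \eqref{eq-density-upper-bound} are measurable with respect to these indicators, since every edge counted by $\mathcal{E}_{\pi^*}(U;\mathtt{W}_1)$ for $U \subset \mathtt{U}_1$ lies in $\operatorname{E}(\mathtt{U}_1;\mathtt{W}_1)$.

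I would next check that each of (i)--(iv) in Definition~\ref{def-admissible} as well as the event \eqref{eq-density-upper-bound} is \emph{decreasing} in these edge indicators. Items (i), (iii), (iv), and the density bound are immediate: adding an edge can only increase vertex degrees, create new cycles inside a small connected subgraph, introduce new short cycles, or increase $|\mathcal{E}_{\pi^*}(U;\mathtt{W}_1)|$. For item (ii), an added edge can (a) push one of its endpoints above the threshold $D_n$, or (b) enlarge some $(2C+2)$-neighborhood so that it contains a high-degree vertex, and in either case the ``good'' set only shrinks; thus the event that there are at least $(1-e^{-D_n})n$ good vertices is also decreasing. The FKG inequality, applied to the product Bernoulli measure on $\{0,1\}^{\operatorname{E}(\mathtt{U}_1;\mathtt{W}_1)}$, then yields
\[
\mathcal{P}^{\dagger,*}_{\mathtt{U}_1,\mathtt{U}_2}[\mathcal{A}_1] = \frac{\mathcal{P}^*_{\mathtt{U}_1,\mathtt{U}_2}[\mathcal{A}_1 \cap \eqref{eq-density-upper-bound}]}{\mathcal{P}^*_{\mathtt{U}_1,\mathtt{U}_2}[\eqref{eq-density-upper-bound}]} \ge \mathcal{P}^*_{\mathtt{U}_1,\mathtt{U}_2}[\mathcal{A}_1],
\]
so it suffices to show $\mathcal{P}^*_{\mathtt{U}_1,\mathtt{U}_2}[\mathcal{A}_1] = 1 - o(1)$.

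For the unconditional bound, $\mathcal{H}_{\pi^*}(\mathtt{U}_1;\mathtt{W}_1)$ is dominated by $\mathbf{G}(n,\lambda/n)$, so each item follows from a routine first-moment computation. For (i), a Chernoff bound gives $\mathbb{P}[\deg(i) > \log n] = n^{-\omega(1)}$, and a union bound concludes. For (iv), the expected number of $k$-cycles is at most $\lambda^k/(2k)$, yielding $\sum_{k \ge 3} \lambda^k/(2k(\log n)^k) = o(1)$ by Markov. For (iii), the expected number of connected subgraphs on $k \le \log\log n$ vertices with at least $k+1$ edges is at most $n^{-1}\sum_{k}\lambda^{k+1} k^{2k+2}/(k!)^2$, which by Stirling is subpolynomial in $n$ and hence $o(1)$. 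For (ii), let $B = \{i : \deg(i) > D_n\}$; Chernoff gives $\mathbb{E}|B| \le n(e\lambda/D_n)^{D_n} = n\exp(-\Omega(D_n \log\log n))$, while a separate first-moment computation on rooted BFS-trees bounds $\max_i |N_{2C+2}(i)|$ by a polylog factor with high probability, so the number of ``bad'' vertices is at most $|B|\cdot(\log n)^{2C+3} \le ne^{-D_n}$ with high probability. The only delicate points in the plan are the monotonicity check for item (ii) and the polylog bound on $(2C+2)$-neighborhood sizes; the rest is routine, and together with the FKG step this yields the claim.
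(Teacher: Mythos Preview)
Your proposal is correct and follows essentially the same approach as the paper: reduce via FKG (both $\mathcal A_1$ and \eqref{eq-density-upper-bound} are decreasing) to the unconditional measure $\mathcal P^*_{\mathtt U_1,\mathtt U_2}$, then handle items (i), (iii), (iv) by union bound/Markov on a sparse \ER graph. The only minor difference is item~(ii): the paper bounds the per-vertex bad probability directly by an induction on the radius $t$, proving $\mathcal P^*_{\mathtt U_1,\mathtt U_2}[\exists j:\operatorname{d}(i,j)\le t,\ \deg(j)\ge D_n]\le (t+1)e^{-2D_n}$ and then applying Markov, whereas you bound $|\{i:\deg(i)>D_n\}|$ and multiply by a polylog $(2C+2)$-neighborhood size---both routes are elementary and yield the same conclusion.
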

	\begin{proof}
		First, recall that $\mathcal P_{\mathtt U_1,\mathtt U_2}^{\dagger,*}[\cdot]=\mathcal P^*_{\mathtt U_1,\mathtt U_2}[\cdot\mid \eqref{eq-density-upper-bound}\text{ holds}]$. Since both $\mathcal A_1$ and $\{\eqref{eq-density-upper-bound}\text{ holds}\}$ are decreasing events, FKG inequality yields that $\mathcal P_{\mathtt U_1,\mathtt U_2}^{\dagger,*}[\mathcal A_1]\ge \mathcal P^*_{\mathtt U_1,\mathtt U_2}[\mathcal A_1]$. Therefore, it suffices to show each item fails with $o(1)$ probability under $\mathcal P_{\mathtt U_1,\mathtt U_2}^*$. For items (i) and (iii), this claim follows from a straightforward union bound, and for item (iv), it follows from Markov inequality (see \cite[Lemma 3.3]{DD23b} for details). For item (ii), we show inductively that for each $i\in [n]$ and $0\le t\le 2C+2$,
		\begin{equation}\label{eq-Dn}
			\mathcal P_{\mathtt U_1,\mathtt U_2}^*[\exists j\in [n]:\operatorname{d}_{\mathcal H_{\pi^*}(\mathtt U_1;\mathtt W_1)}(i,j)\le t,\operatorname{deg}(j)\ge D_n]\le (t+1)\exp(-2D_n)\,.
		\end{equation}
		For $t=0$, the event is equivalent to $\operatorname{deg}(i)>D_n$. Under $\mathcal P^*_{\mathtt U_1,\mathtt U_2}$, the distribution of $\operatorname{deg}(i)$ is stochastically dominated by $\mathbf{B}(n,\lambda/n)$. Then the Chernoff bound yields that $\mathcal P_{\mathtt U_1,\mathtt U_2}^*[\operatorname{deg}(i)>D_n]\le \exp(-D_n\log D_n+O(D_n))$, and thus \eqref{eq-Dn} holds for $t=0$.  For any $1\le t\le 2C+2$, assume \eqref{eq-Dn} holds for $t-1$. Note that conditioned on any vertex in the $(t-1)$-neighborhood of $i$ has degree no more than $D_n$, the degree of any vertex with distance $(t-1)$ to $i$ is still dominated by $\mathbf B(n,\frac\lambda n)$. Therefore, from the induction hypothesis and a union bound,
		\begin{align*}
			&\ \mathcal P_{\mathtt U_1,\mathtt U_2}^*[\exists j\in [n]:\operatorname{d}_{\mathcal H_{\pi^*}(\mathtt U_1;\mathtt W_1)}(i,j)\le t,\operatorname{deg}(j)\ge D_n]\\
			\le&\ t\exp(-2D_n)+D_n^t\exp(-D_n\log D_n+O(D_n))\le (t+1)\exp(-2D_n)\,.
		\end{align*}
		This completes the induction proof of \eqref{eq-Dn}, and $\mathcal P^*_{\mathtt U_1,\mathtt U_2}[(iv)\text{ fails}]=o(1)$ follows from the case $t=2C+2$ and the Markov inequality.
	\end{proof}
	
	Recall that we defined $\mathcal P_{\mathtt U_1,\mathtt U_2}^{\ddagger,*}[\cdot]=\mathcal P_{\mathtt U_1,\mathtt U_2}^{\dagger,*}[\cdot \mid \mathcal A_1]=\mathcal P^*_{\mathtt U_1,\mathtt U_2}[\cdot \mid \mathcal A]$, where $\mathcal A\equiv \mathcal A_1\cap \{\eqref{eq-density-upper-bound}\text{ holds}\}$. Then it follows from the previous lemma that $\operatorname{TV}(\mathcal P_{\mathtt U_1,\mathtt U_2}^{\dagger,*},\mathcal P_{\mathtt U_1,\mathtt U_2}^{\ddagger,*})=o(1)$.
	
	\subsection{Proof of Proposition~\ref{prop-dp/dq-bound}}\label{subsec-B.1}
	We now proceed to bound the likelihood ratio of $\mathcal P_{\mathtt U_1,\mathtt U_2}^{\ddagger,*}$ and $\mathcal Q_{\mathtt U_1,\mathtt U_2}$. By definition, it is straightforward to calculate that for any $(G_1^{\mathtt U_1},G_2^{\mathtt U_2})$,
	\begin{align*}
		\frac{\mathcal P_{\mathtt U_1,\mathtt U_2}^{\ddagger,*}[G_1^{\mathtt U_1},G_2^{\mathtt U_2}]}{\mathcal Q_{\mathtt U_1,\mathtt U_2}[G_1^{\mathtt U_1},G_2^{\mathtt U_2}]}=\frac{1}{\mathcal P^*_{\mathtt U_1,\mathtt U_2}[\mathcal A]}\cdot \frac{1}{|\mathtt U_1|!}\sum_{\pi:\pi\mid_{\mathtt U_1}=\pi^*\mid_{\mathtt U_1}}\frac{\mathcal P^*_{\mathtt U_1,\mathtt U_2}[\pi,G_1^{\mathtt U_1}, G_2^{\mathtt U_2}]\mathbf{1}\{(\pi,G_1^{\mathtt U_1},G_2^{\mathtt U_2})\in \mathcal A\}}{\mathcal Q_{\mathtt U_1,\mathtt U_2}[G_1^{\mathtt U_1},G_2^{\mathtt U_2}]}\,.
	\end{align*}
	We have a trivial lower bound
	\[
	\mathcal P^*_{\mathtt U_1,\mathtt U_2}[\mathcal A]\ge \mathcal P^*_{\mathtt U_1,\mathtt U_2}[E(G_1)=E(G_2)=\emptyset]\ge\left(1-\frac\lambda n\right)^{2\binom{n}{2}}=\exp(-O_\lambda(n))\,,
	\]
	where $O_\lambda(\cdot)$ means the constant in $O(\cdot)$ term only depends on $\lambda$ (and similar for below). Thus,
	\[
	\frac{\mathcal P_{\mathtt U_1,\mathtt U_2}^{\ddagger,*}[G_1^{\mathtt U_1},G_2^{\mathtt U_2}]}{\mathcal Q_{\mathtt U_1,\mathtt U_2}[G_1^{\mathtt U_1},G_2^{\mathtt U_2}]}\le \frac{\exp(O_\lambda(n))}{|\mathtt U_1|!}\sum_{\pi:\pi\mid_{\mathtt U_1}=\pi^*\mid_{\mathtt U_1}}\frac{\mathcal P^*_{\mathtt U_1,\mathtt U_2}[\pi,G_1^{\mathtt U_1}, G_2^{\mathtt U_2}]\mathbf{1}\{(\pi,G_1^{\mathtt U_1},G_2^{\mathtt U_2})\in \mathcal A\}}{\mathcal Q_{\mathtt U_1,\mathtt U_2}[G_1^{\mathtt U_1},G_2^{\mathtt U_2}]}\,.
	\]
	Consequently, to prove Proposition~\ref{prop-dp/dq-bound}, it suffices to show
	\[
	\frac{1}{|\mathtt U_1|!}\mathbb{E}_{(G_1^{\mathtt U_1},G_2^{\mathtt U_2})\sim \mathcal P_{\mathtt U_1,\mathtt U_2}^*}\sum_{\pi:\pi\mid_{\mathtt U_1}=\pi^*\mid_{\mathtt U_1}}\frac{\mathcal P^*_{\mathtt U_1,\mathtt U_2}[\pi,G_1^{\mathtt U_1}, G_2^{\mathtt U_2}]\mathbf{1}\{(\pi,G_1^{\mathtt U_1},G_2^{\mathtt U_2})\in \mathcal A\}}{\mathcal Q_{\mathtt U_1,\mathtt U_2}[G_1^{\mathtt U_1},G_2^{\mathtt U_2}]}=\exp(O_{\lambda,\varepsilon}(n))\,.
	\]	
    
	For $(\pi^*,G_1^{\mathtt U_1},G_2^{\mathtt U_2})\sim \mathcal P^*_{\mathtt U_1,\mathtt U_2}$ and $\pi\in\operatorname{S}_n$ with $\pi\mid_{\mathtt U_1}=\pi^*\mid_{\mathtt U_1}$, let $\mathcal J_\pi=\mathcal J_\pi(\pi^*,G_1^{\mathtt U_1},G_2^{\mathtt U_2})$ be the set of edge orbits of $\sigma\equiv\pi^*\circ\pi^{-1}$ that entirely contains within $\mathcal H_{\pi^*}$. With a slight abuse of notation, we denote $|\mathcal J_\pi|$ as the total number of edges of $\mathcal H_{\pi^*}$ lying in orbits in $\mathcal J_\pi$. Following identical arguments as in \cite[Lemma 3.2]{DD23a}, we arrive at the following lemma.
	
	\begin{lemma}
		The following inequality holds
		\begin{align}
			\nonumber&\ \mathbb{E}_{(G_1^{\mathtt U_1},G_2^{\mathtt U_2})\sim \mathcal P_{\mathtt U_1,\mathtt U_2}^{\ddagger,*}}\sum_{\pi:\pi\mid_{\mathtt U_1}=\pi^*\mid_{\mathtt U_1}}\frac{\mathcal P^*_{\mathtt U_1,\mathtt U_2}[\pi,G_1^{\mathtt U_1}, G_2^{\mathtt U_2}]\mathbf{1}\{(\pi,G_1^{\mathtt U_1},G_2^{\mathtt U_2})\in \mathcal A\}}{\mathcal Q_{\mathtt U_1,\mathtt U_2}[G_1^{\mathtt U_1},G_2^{\mathtt U_2}]}\\
			\le&\ \mathbb{E}_{(\pi^*,G_1^{\mathtt U_1},G_2^{\mathtt U_2})\sim \mathcal P_{\mathtt U_1,\mathtt U_2}^{\ddagger,*}}\sum_{\pi:\pi\mid_{\mathtt U_1}=\pi^*\mid_{\mathtt U_1}}\frac{p^{-|\mathcal J_\pi|}}{\mathcal P_{\mathtt U_1,\mathtt U_2}^*[\mathcal A\mid \pi^*,\mathcal J_\pi]}\,.\label{eq-exp-moment}
			%\\\le&\  \mathbb{E}_{(\pi^*,G_1^{\mathtt U_1},G_2^{\mathtt U_2})\sim \mathcal P_{\mathtt U_1,\mathtt U_2}^{\ddagger,*}}\sum_{\pi:\pi\mid_{\mathtt U_1}=\pi^*\mid_{\mathtt U_1}}(c_1p)^{-|\mathcal J_\pi|}\,.
		\end{align}
	\end{lemma}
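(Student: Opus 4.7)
The proof follows the strategy of \cite[Lemma 3.2]{DD23a}, which establishes an analogous bound in the detection setting. The central idea is to decompose the likelihood ratio $\mathcal P^*_{\mathtt U_1, \mathtt U_2}[\pi, G_1^{\mathtt U_1}, G_2^{\mathtt U_2}]/\mathcal Q_{\mathtt U_1, \mathtt U_2}[G_1^{\mathtt U_1}, G_2^{\mathtt U_2}]$ multiplicatively along the edge orbits of $\sigma = \pi^* \circ \pi^{-1}$, and then to absorb the admissibility indicators via a conditional probability argument.

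First I would unfold the measure $\mathcal P^{\ddagger,*}_{\mathtt U_1, \mathtt U_2}[\cdot] = \mathcal P^*_{\mathtt U_1, \mathtt U_2}[\cdot \mid \mathcal A]$, so that the LHS acquires a factor of $1/\mathcal P^*_{\mathtt U_1, \mathtt U_2}[\mathcal A]$, and push the sum over $\pi$ outside the expectation via Fubini. For each fixed $\pi$ with $\pi\mid_{\mathtt U_1} = \pi^*\mid_{\mathtt U_1}$, I would expand $\mathcal P^*[\pi, G_1^{\mathtt U_1}, G_2^{\mathtt U_2}]$ as a product over the Bernoulli indicators $I_{i,j}, J^1_{i,j}, J^2_{i,j}$ from Section~\ref{subsec-notation}. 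Upon division by $\mathcal Q_{\mathtt U_1, \mathtt U_2}[G_1^{\mathtt U_1}, G_2^{\mathtt U_2}]$ and grouping terms along the orbits of $\sigma$ acting on $\operatorname{E}(\mathtt U_1; \mathtt W_1)$, the ratio decomposes into a product of orbit-wise factors; this decomposition works because different orbits involve disjoint shared base indicators $I_{\cdot, \cdot}$, making their contributions independent under $\mathcal Q$.

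The key computation is the per-orbit contribution. For any orbit $O$ of $\sigma$ entirely contained in $\mathcal H_{\pi^*}$ (i.e., an orbit in $\mathcal J_\pi$), the shared base indicators along $O$ are forced to equal $1$, and a direct calculation of the ratio of the coupled Bernoulli weights to the independent null weights shows that the orbit factor equals $p^{-|O|}$. Multiplying across all orbits in $\mathcal J_\pi$ yields the $p^{-|\mathcal J_\pi|}$ on the RHS. For the remaining ``free'' orbits, the orbit factor has unit expectation under $\mathcal Q$ by the Radon-Nikodym identity applied block by block, so these orbits can be integrated out without penalty.

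To produce the denominator $\mathcal P^*[\mathcal A \mid \pi^*, \mathcal J_\pi]$, I would condition on the sigma-algebra generated by $(\pi^*, \mathcal J_\pi)$ and apply the tower property. After the orbit decomposition, the ratio (up to the $\mathcal A$ indicator and the $p^{-|\mathcal J_\pi|}$ factor) is $(\pi^*, \mathcal J_\pi)$-measurable, so integrating the free orbits produces a conditional expectation $\mathbb{E}[\mathbf{1}_\mathcal A \mid \pi^*, \mathcal J_\pi] = \mathcal P^*[\mathcal A \mid \pi^*, \mathcal J_\pi]$; combining this with the front factor $1/\mathcal P^*_{\mathtt U_1, \mathtt U_2}[\mathcal A]$ via Bayes' rule converts the product into the inverse conditional probability $1/\mathcal P^*[\mathcal A \mid \pi^*, \mathcal J_\pi]$ after absorbing the outer normalization into the conditional measure. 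The main technical obstacle will be tracking the two admissibility indicators --- one implicit in the measure $\mathcal P^{\ddagger,*}$ (for the true matching $\pi^*$) and one explicit in the summand (evaluated at the putative matching $\pi$) --- and verifying that the swap $\pi \leftrightarrow \pi^*$ preserves the relevant conditional distribution along the orbits in $\mathcal J_\pi$; this step follows the same template as in \cite[Lemma 3.2]{DD23a} and is where the bulk of the calculation lies.
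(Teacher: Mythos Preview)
Your proposal is correct and aligns with the paper's own treatment: the paper does not give an independent proof but simply states that the lemma follows ``by identical arguments as in \cite[Lemma 3.2]{DD23a}'', which is precisely the orbit-decomposition and conditional-probability strategy you outline. Your identification of the key steps---factorizing the likelihood ratio along edge orbits of $\sigma=\pi^*\circ\pi^{-1}$, extracting $p^{-|O|}$ from each orbit that lies entirely in $\mathcal H_{\pi^*}$, and converting the two admissibility indicators into the inverse conditional probability $\mathcal P^*_{\mathtt U_1,\mathtt U_2}[\mathcal A\mid\pi^*,\mathcal J_\pi]^{-1}$ via the tower property---matches the template of \cite{DD23a} and hence the paper's intended argument.
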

	
We can handle the denominators of the summands in \eqref{eq-exp-moment} as follows. Note that conditioning on the realization of $\mathcal J_\pi$ is equivalent to conditioning on $H(\mathcal J_\pi)\subset \mathcal H_{\mathtt U_1,\mathtt U_2}$ together with another decreasing event (here $H(\mathcal J_\pi)$ denotes the graph on $[n]$ with edges in the orbits in $\mathcal J_\pi$; see \cite[Lemma 3.4]{DD23a} for details), and thus 
	\[
	\mathcal P_{\mathtt U_1,\mathtt U_2}^*[\mathcal A\mid \pi^*,\mathcal J_\pi]\ge \mathcal P_{\mathtt U_1,\mathtt U_2}^*[\mathcal H_{\pi^*}(\mathtt U_1;\mathtt W_1)=H(\mathcal J_\sigma)\mid \pi^*,\mathcal J_\pi]\ge (1-ps^2)^{\binom{n}{2}}\ge \exp(-O_\lambda(n))\,,
	\]
	where the last inequality follows from FKG inequality.
	Combining this lower bound with the above lemma, Proposition~\ref{prop-dp/dq-bound} follows upon showing that whenever $\mathcal H_{\pi^*}(\mathtt U_1;\mathtt W_1)$ is admissible and satisfies \eqref{eq-density-upper-bound} (which holds almost surely under $\mathcal P_{\mathtt U_1,\mathtt U_2}^{\ddagger,*}$), one has
	\begin{equation}\label{eq-goal-bound-1}
		\frac{1}{(n-|\mathtt U_1|!)}\sum_{\pi:\pi\mid_{\mathtt U_1}=\pi^*\mid_{\mathtt U_1}}p^{-|\mathcal J_\pi|}=\exp(O_\varepsilon(n))\,.
	\end{equation}
	
	To this end, we follow the approach in \cite{DD23a}. For each $r\in \mathbb{N}$, we define $\mathfrak C_r$ (resp. $\mathcal T_r$) as the set of isomorphic classes of connected non-tree graphs (resp, trees) with $r$ vertices. We represent each class with a representative graph.  For any $C\in \mathfrak C_r$, let $\operatorname{Aut}(C)$ be the number of automorphism of $C$, and $t(C)=t(C,\mathcal H_{\pi^*}(\mathtt U_1;\mathtt W_1))$ be the number of (unlabeled) embeddings of $C$ into $\mathcal H_{\pi^*}(\mathtt U_1;\mathtt W_1)$. As argued in the proof of Proposition 3.7 therein and by utilizing condition~\eqref{eq-density-upper-bound}, we conclude that the left hand side of \eqref{eq-goal-bound-1} is bounded from above by $1+o(1)$ multiplies
	\begin{equation}\label{eq-product}
		\prod_{C\in \bigcup\mathfrak C_r}\sum_{x=0}^n\Big(\frac{2e\operatorname{Aut}(C)t(C)}{np^{\alpha^{-1}-\varepsilon}}\Big)^x\times \prod_{T\in \bigcup\mathfrak T_r}\sum_{y=0}^n \Big(\frac{2ep\operatorname{Aut}(T)t(T)}{(np)^{|T|}}\Big)^y\,,
	\end{equation}
	where $x$ (resp. $y$) corresponds to the number of vertex-disjoint copies of $C$ (resp. $T$) in $H(\mathcal J_\pi)\subset \mathcal H_{\pi^*}(\mathtt U_1;\mathtt W_1)$, which is trivially upper-bounded by $n$. 
	It can be shown as in \cite[Lemma 3.5]{DD23a} that whenever $\mathcal H_{\pi^*}(\mathtt U_1;\mathtt W_1)$ is admissible and satisfies \eqref{eq-density-upper-bound}, it holds
	\[
	\sum_{C\in \bigcup \mathfrak C_r}\operatorname{Aut}(C)t(C)\le r^3(2^{\alpha^{-1}+1}\log n)^r\,,
	\]
	and
	\[
	\sum_{T\in \mathfrak T_r}\operatorname{Aut}(T)t(T)\le n(4\log n)^{2(r-1)}\,.
	\]
	In light of these bounds, recalling $p=n^{-\alpha+o(1)}$, we can conclude the first part of \eqref{eq-product} is $1+o(1)$, and so does the second part provided $\alpha<1$; see \cite[Proposition 3.7]{DD23a} for details. Therefore, the left hand side of \eqref{eq-goal-bound-1} is $1+o(1)$ whenever $\alpha<1$, as desired. 
	
	For the remaining case $\alpha=1$, it suffices to show the second term in \eqref{eq-product} is $\exp(O_\varepsilon(n))$. Note that condition \eqref{eq-density-upper-bound} implies that any subgraph of $\mathcal H_{\pi^*}(\mathtt U_1;\mathtt W_1)$ has density no more than $1-\varepsilon$. This means $\mathcal H_{\pi^*}(\mathtt U_1;\mathtt W_1)$ is a forest consists of trees with no more than $\varepsilon^{-1}$ many vertices. Clearly, this implies $t(T)\neq 0$ for only for $T\in \bigcup_{r=1}^{\lfloor1/\varepsilon\rfloor} \mathfrak T_r$. Additionally, since for every $T\in \bigcup_{r=1}^{\lfloor 1/\varepsilon\rfloor} \mathfrak C_r$, $\operatorname{Aut}(T)=O_\varepsilon(1),t(T)=O_\varepsilon(n)$, we have (as $np\ge nps^2=\lambda\ge 1$)
	\[
	\sup_{T \in \bigcup_{r=1}^{\lfloor 1/\varepsilon\rfloor} \mathfrak C_r}\frac{2ep\operatorname{Aut}(T)t(T)}{(np)^{|T|}}=O_\varepsilon\left(\frac{np}{(np)^{|T|}}\right)=O_\varepsilon(1)\,.
	\]
	This means the second term in \eqref{eq-product} is indeed $\exp(O_\varepsilon(n))$, as desired.  The proof of Proposition~\ref{prop-dp/dq-bound} is now completed.
	
	\subsection{Proof of Proposition~\ref{prop-posterior-bound-under-Q}}
	
	Finally we prove Proposition~\ref{prop-posterior-bound-under-Q}. For $\sigma\in \operatorname{B}(\mathtt U_1,\mathtt U_2)$ and $(G_1^{\mathtt U_1},G_2^{\mathtt U_2})\sim \mathcal Q_{\mathtt U_1,\mathtt U_2}$, abbreviate $$\mathcal R(\sigma,G_1^{\mathtt U_1},G_2^{\mathtt U_2})\equiv\frac{\mathcal P_{\mathtt U_1,\mathtt U_2}^{\ddagger,*}[\pi^*\mid_{\mathtt U_1}=\sigma,G_1^{\mathtt U_1},G_2^{\mathtt U_2}]}{{\mathcal Q}_{\mathtt U_1,\mathtt U_2}[G_1^{\mathtt U_1},G_2^{\mathtt  U_2}]}\,.$$ 
	Our goal is to upper-bound the following quantity:
	\[
	M_1\equiv \sup_{\sigma_0\in \operatorname{B}(\mathtt U_1,\mathtt U_2)}\sum_{\substack{\sigma\in \operatorname{B}(\mathtt U_1,\mathtt U_2)\\\operatorname{ov}(\sigma,\sigma_0)\ge \varepsilon n}}\mathcal R(\sigma,G_1^{\mathtt U_1},G_2^{\mathtt U_2})\,.
	\]
	Henceforth we may assume that $|\mathtt U_1|\ge \varepsilon n$, otherwise $M_1=0$.
	For $V_1,V_2\subset [n]$ with $|V_1|\le |V_2|$, we use $\operatorname{I}(V_1,V_2)$ to denote the set of injections from $V_1$ to $V_2$. We turn to consider 
	\[
	M_2\equiv \sup_{\substack{B\subset \mathtt U_1,|B|\ge \varepsilon n\\\sigma_0\in \operatorname{I}(B,\mathtt U_2)}}\sum_{\substack{\sigma\in \operatorname{B}(\mathtt U_1,\mathtt U_2)\\\sigma\mid_B=\sigma_0}}\mathcal R(\sigma,G_1^{\mathtt U_1},G_2^{\mathtt U_2})\,.
	\]
	Clearly $M_1\le \binom{|\mathtt U_1|}{\varepsilon  n}M_2\le 2^nM_2$.
    
    We need yet another layer of reduction, and to this end we introduce the good sets as below. The good sets enjoy some desirable combinatorial properties that will become crucial in later proof. Recall the large integer $C$ defined as in \eqref{eq-C}.
	
	\begin{definition}
		Let $\mathcal H$ be a graph on $[n]$, and let $\operatorname{d}_{\mathcal H}$ denote the graph distance on $\mathcal H$. We say a set $A\subset [n]$ is a \emph{good set} of $\mathcal H$, if the following two properties hold:\\
		\noindent(i) \emph{Well-separated in $\mathcal H$}: for any two vertices $u,v\in A$, $\operatorname{d}_{\mathcal H}>2C+2$.\\
		\noindent(ii) \emph{Faraway from short cycles}: For any cycle $\mathcal C$ in $\mathcal H$ of length $\le C$, $\operatorname{d}_\mathcal H(A,\mathcal C)>C$.
	\end{definition}
	
	The following lemma shows that in an admissible graph,  any set of linear size has a large good subset.
	\begin{lemma}
		Fix an arbitrary $\varepsilon>0$. Assume that $n$ is large enough and $\mathcal H=([n],\mathcal E)$ is admissible. For any subset $B\subset[n]$ with size at least $\varepsilon n$, there exists $A\subset B$ which is a good set of $\mathcal H_{\pi^*}(\mathtt U_1;\mathtt W_1)$, and satisfies $|A|=K_n\equiv \lceil 2n/(\alpha\varepsilon\sqrt{\log n})\rceil$. 
	\end{lemma}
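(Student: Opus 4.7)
The plan is a greedy selection applied to a suitable ``nice'' subset of $B$, where neighborhoods are small and short cycles are far away. Two obstructions need to be bypassed in building a good set: the $(2C+2)$-neighborhoods of already-chosen vertices, and the $C$-neighborhoods of short cycles. Admissibility is exactly calibrated so that both exclusion zones cost only $o(n)$ vertices.

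Let $S_1$ be the set of vertices whose entire $(2C+2)$-ball in $\mathcal H$ has degrees at most $D_n$; admissibility (ii) gives $|S_1^c|\le e^{-D_n}n$. Put
\[
G'=\{i\in S_1 : \operatorname{d}_{\mathcal H}(i,\mathcal C)>C \text{ for every cycle } \mathcal C \text{ with } |\mathcal C|\le C\}.
\]
The key estimate is $|G'^c|=o(n)$, which reduces to bounding $|S_1\setminus G'|$. The observation is that if $i\in S_1$ lies within distance $C$ of some short cycle $\mathcal C$, then a shortest $i$--$\mathcal C$ path sits inside $B_{\mathcal H}(i,2C+2)$, so every vertex on it (including the endpoint on $\mathcal C$) has degree at most $D_n$. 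Hence, starting BFS from any fixed vertex of $\mathcal C$ through low-degree vertices captures all such $i$, yielding at most $1+D_n+\cdots+D_n^{C}\le 2 D_n^{C}$ candidates per cycle vertex. Summing over the $O((\log n)^{C+1})$ many (cycle, vertex) pairs coming from admissibility (iv) gives $|S_1\setminus G'|=O((\log n)^{2C})=o(n)$, so indeed $|G'^c|=o(n)$.

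With this in hand a greedy procedure finishes the job. Since $|B|\ge\varepsilon n$ and $|G'^c|=o(n)$, we have $|B\cap G'|\ge\varepsilon n/2$ for $n$ large. Iteratively pick any surviving $v\in B\cap G'$, add it to $A$, and forbid the $(2C+2)$-ball of $v$. Since $v\in S_1$, this ball has size at most $D_n^{2C+2}=(\log n)^{(C+1)/(5C)}$, and the exponent $(C+1)/(5C)$ is strictly less than $1/2$ (e.g.\ for $C\ge 2$). Therefore the process runs for at least
\[
\frac{\varepsilon n/2}{(\log n)^{(C+1)/(5C)}}=\omega\Big(\frac{n}{\sqrt{\log n}}\Big)
\]
iterations, which exceeds $K_n$ for $n$ large; truncating to $|A|=K_n$ concludes. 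The output is automatically good: distinct elements of $A$ lie at distance $>2C+2$ in $\mathcal H$ (one forbade the other), and membership in $G'$ places every element at distance $>C$ from all short cycles. The main obstacle is the shielding argument in the previous paragraph, ensuring that the low-degree guarantee from $S_1$ propagates to the vertices of nearby short cycles; this is precisely what lets the short-cycle exclusion be bounded by a poly-log factor and makes the greedy count work.
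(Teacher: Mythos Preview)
Your argument is correct and follows essentially the same two-step scheme as the paper: first prune $B$ of vertices that are either close to high-degree vertices or close to short cycles, then run a greedy pick-and-delete on the remaining $\ge \varepsilon n/2$ vertices, each of whose $(2C+2)$-ball has size at most $D_n^{2C+2}=(\log n)^{(C+1)/(5C)}=o(\sqrt{\log n})$. The only cosmetic difference is in bounding the short-cycle exclusion: the paper simply invokes admissibility~(i) (global max degree $\le \log n$) so that the $C$-ball of each cycle vertex has size $\le (\log n)^C$, giving the deletion count $\sum_{k=3}^C k(\log n)^k\cdot(\log n)^C=o(n)$ directly, whereas your shielding argument routes through $D_n$ instead; both yield polylog bounds and the conclusion is the same.
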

	
	\begin{proof}
First, we delete any vertex $ v \in B $ such that either $ v $ is within distance $ C $ of a cycle in $ \mathcal{H} $ with size at most $ C $, or it is within distance $ 2C + 2 $ of a vertex with degree larger than $ D_n $. By admissibility, it is straightforward to see that the number of deleted vertices is at most
\[ 
\sum_{k=3}^Ck(\log n)^k\cdot (\log n)^{C} + n\exp(-D_n) \le \varepsilon n / 2 \,.
\] 
For the remaining at least $ \varepsilon n / 2 $ vertices in $ B $, they are all far from short cycles, and their $ (2C + 2) $-neighborhoods have size bounded above by $ D_n^{2C + 2} $. Given this, a simple iterative pick-and-delete algorithm yields a good set $ A' \subset B $ with:
\[
|A'| \ge \frac{\varepsilon n}{4D_n^{2C+2}} \ge \frac{2n}{\alpha \varepsilon \sqrt{\log n}}\,,
\]
and we can choose $ A $ as an arbitrary $ K_n $-subset of $ A' $.
	\end{proof}
	
	Define
	\[
	M_3\equiv \sup_{\substack{A\subset \mathtt U_1,|A|=K_n\\\sigma_0\in\operatorname{I}(A,\mathtt U_2) }}\sum_{\substack{\sigma\in \operatorname{B}(\mathtt U_1,\mathtt U_2)\\\sigma\mid_A=\sigma_0}}\mathcal R(\sigma,G_1^{\mathtt U_1},G_2^{\mathtt U_2})\mathbf{1}\{A\text{ is a good set of }\mathcal H_{\pi}(\mathtt U_1;\mathtt W_1)|!\}\,,
	\]
	where $\pi=\pi(\pi^*,\sigma)\in \operatorname{S}_n$ is the extension of $\sigma$ and $\pi^*\mid_{\mathtt W_1}$, and $\mathcal H_{\pi}(\mathtt U_1;\mathtt W_1)$ is the $\pi$-intersection graph of $G_1^{\mathtt U_1},G_2^{\mathtt U_2}$. 
	Since $\mathcal H_{\pi^*}(\mathtt U_1;\mathtt W_1)$ is almost surely admissible under $\mathcal P_{\mathtt U_1,\mathtt U_2}^{\ddagger,*}$, the above lemma yields $M_2\le \binom{\varepsilon n}{K_n}M_3\le 2^nM_3$. Therefore, Proposition~\ref{prop-posterior-bound-under-Q} follows upon showing that
	\begin{equation}\label{eq-M3}
		\mathcal Q_{\mathtt U_1,\mathtt U_2}[M_3\ge \exp(-4n)]\le \exp(-n\sqrt{\log n})\,.
	\end{equation}
	
	Using the simple fact that the maximum of a finite set $S\subset \mathbb R$ is no more than $\big(\sum_{s\in S}s^2\big)^{1/2}$, \eqref{eq-M3} follows from Markov's inequality provided we can show that
	\[
	\mathbb{E}_{(G_1^{\mathtt U_1},G_2^{\mathtt U_2})\sim \mathcal Q_{}}\sum_{\substack{A\subset \mathtt U_1,|A|=K_n\\\sigma_0\in \operatorname{I}(A,\mathtt U_2)}}\Bigg(\sum_{\substack{\sigma\in \operatorname{B}(\mathtt U_1,\mathtt U_2)\\\sigma\mid_A=\sigma_0}}\mathcal R(\sigma,G_1^{\mathtt U_1},G_2^{\mathtt U_2})\mathbf{1}\{A\text{ is a good set of }\mathcal H_{\pi}(\mathtt U_1;\mathtt W_1)|!\}\Bigg)^2
	\]
	is bounded from above by $\exp(-n\sqrt{\log n}-8n)$. There are $\binom{n}{K_n}\cdot |\mathtt U_1|(|\mathtt U_1|-1)\cdots(|\mathtt U_1|-K_n+1)\le \exp(n+K_n{\log n)}$ many choices of $(A,\sigma)$. Since the second moment of the latter sum under $\mathcal Q_{\mathtt U_1,\mathtt U_2}$ is invariant of such a choice, it suffices to show for each fixed pair $(A,\sigma)$, 
	\begin{equation}
		\begin{aligned}
			\mathbb{E}_{(G_1^{\mathtt U_1,\mathtt U_1})\sim \mathcal Q_{\mathtt U_1,\mathtt U_2}}&\Bigg(\sum_{\substack{\sigma\in \operatorname{B}(\mathtt U_1,\mathtt U_2)\\\sigma\mid_A=\sigma_0}}\mathcal R(\sigma,G_1^{\mathtt U_1},G_2^{\mathtt U_2})\mathbf{1}\{A\text{ is a good set of }\mathcal H(\mathtt U_1,\mathtt U_2)\}\Bigg)^2\\
			&\qquad\qquad\le \exp(-n\sqrt{\log n}-K_n{\log n}-9n)\,.
		\end{aligned}
	\end{equation}
	The rest of the paper is dedicated to prove this estimate. 
	
	Recall that for $\pi=\pi(\sigma)\in \operatorname{S}_n$,
	\[
	\mathcal P^{\ddagger,*}_{\mathtt U_1,\mathtt U_2}[\pi,G_1^{\mathtt U_1},G_2^{\mathtt U_2}]\le \frac{\exp(O_\lambda(n))}{|\mathtt U_1|!}\cdot\mathcal P^*_{\mathtt U_1,\mathtt U_2}[\pi,G_1^{\mathtt U_1,\mathtt U_2}]\cdot\mathbf{1}\{(\pi,G_1^{\mathtt U_1},G_2^{\mathtt U_2})\in \mathcal A\}\,,
	\]
	then it follows from a straightforward calculation that
	\[
	\mathcal R(\sigma,G_1^{\mathtt U_1},G_2^{\mathtt U_2})\le \frac{\exp(O_\lambda(n))}{|\mathtt U_1|!}\cdot P^{|\mathcal E_\pi|}Q^{|E(G_1^{\mathtt U_1})|+|E(G_2^{\mathtt U_2})|}R^{\binom{n}{2}-\binom{|\mathtt W_1|}{2}}\cdot \mathbf{1}\{(\pi,G_1^{\mathtt U_1},G_2^{\mathtt U_2})\in \mathcal A\}\,,
	\]
	where $\mathcal E_\pi$ is the edge set of $\mathcal H_{\pi}(\mathtt U_1;\mathtt W_1)$, and 
	\begin{equation}\label{eq-PQR}
		P\equiv \frac{1-2ps+ps^2}{p(1-s)^2},\quad Q\equiv \frac{(1-s)(1-ps)}{1-2ps+ps^2},\quad R\equiv \frac{1-2ps+ps^2}{(1-ps)^2}\,.
	\end{equation}
	Therefore, the left hand side of \eqref{eq-M3} is bounded by $\frac{\exp(O_\lambda(n))}{(|\mathtt U_1|!)^2}$ multiplies
	\[
	\mathbb{E}_{ \mathcal Q_{\mathtt U_1,\mathtt U_2}}\Bigg(\sum_{\substack{\sigma\in \operatorname{B}(\mathtt U_1,\mathtt U_2)\\\sigma\mid_A=\sigma_0}}P^{|\mathcal E_\pi|}Q^{|E(G_1^{\mathtt U_1})|+|E(G_2^{\mathtt U_2}|)|}R^{\binom{n}{2}-\binom{|\mathtt W_1|}{2}}\mathbf{1}\{(\pi,G_1^{\mathtt U_1},G_2^{\mathtt U_2})\in \mathcal A,A\text{ is good}\}\Bigg)^2\,.
	\]
	
	We denote $(\mathcal H_\pi)_A$ for the induced subgraphs of $\mathcal H_{\pi}(\mathtt U_1;\mathtt W_1)$ on $\mathtt W_1\cup A$, and let $(\mathcal H_\pi)^A$ be the subgraph of $\mathcal H_{\pi}(\mathtt U_1;\mathtt W_1)$ obtained from deleting all the edges in $\mathcal H_A$. Beware that $(\mathcal H_\pi)_A,(\mathcal H_\pi)^A$ both depend on the choices of $G_1^{\mathtt U_1},G_2^{\mathtt U_2}$, though we suppress the dependence form the notation. Similarly, we write $(G_1)_A,(G_2)_{\sigma_0(A)}$ for the induced subgraphs of $G_1,G_2$ on $A,\sigma_0(A)$, respectively, and let $(G_1)^A=G_1^{\mathtt U_1}\setminus (G_1)_A,(G_2)^{\sigma_0(A)}=G_2^{\mathtt U_2}\setminus (G_2)_A$ (here the set-minus means deleting the edges). Define the events
	\begin{align*}
		&\mathcal G_A^0=\{|\mathcal E_{\sigma_0}(A)|\le (\alpha^{-1}-\varepsilon)K_n\}\,,\\
		&\mathcal G_A^1=\{(\mathcal H_\pi)^A\text{ is admissible}\}\,,\\
		&\mathcal G_A^2=\{A\text{ is a good set of }(\mathcal H_\pi)^A\}.
	\end{align*}
	Clearly, $\mathcal G_A^0$ is independent of $\mathcal G_A^1\cap \mathcal G_A^2$, and
	
	\[\{(\pi,G_1^{\mathtt U_1},G_2^{\mathtt U_2})\in \mathcal A, A\text{ is good of }\mathcal H_{\pi}(\mathtt U_1;\mathtt W_1)\}\subset \mathcal G_A^0\cap \mathcal G_A^1\cap \mathcal G_A^2\,,\]
	and the above expression is upper-bounded by
	\begin{align}
		\nonumber	&\ \mathbb{E}_{\mathcal Q_{\mathtt U_1,\mathtt U_2}}P^{2|\mathcal E_{\sigma_0}(A)|}Q^{|E((G_1)_A)|+|E((G_2)_{\sigma_0(A)})|}R^{\binom{|\mathtt W_1|+K_n}{2}-\binom{|\mathtt W_1|}{2}}\mathbf{1}\{\mathcal G_A^0\}\\
		\nonumber	\times&\ \Bigg(\sum_{\substack{\sigma\in \operatorname{B}(\mathtt U_1,\mathtt U_2)\\\sigma\mid_A=\sigma_0}}P^{|(\mathcal E_\pi)^A|}Q^{|E((G_1)^A)|+|E((G_2)^A)|}R^{\binom{n}{2}-\binom{|\mathtt W_1|+K_n}{2}}\mathbf{1}\{\mathcal G_A^1\cap \mathcal G_A^2\}\Bigg)^2\\
		\label{eq-term-1}	=&\ \mathbb{E}_{\mathcal Q_{\mathtt U_1,\mathtt U_2}}P^{2|\mathcal E_{\sigma_0}(A)|}Q^{|E((G_1)_A)|+|E((G_2)_{\sigma_0(A)})|}R^{\binom{|\mathtt W_1|+K_n}{2}-\binom{|\mathtt W_1|}{2}}\mathbf{1}\{\mathcal G_A^0\}\\\times&\ 
		\label{eq-term-2}	\mathbb{E}_{\mathcal Q_{\mathtt U_1,\mathtt U_2}}\Bigg(\sum_{\substack{\sigma\in \operatorname{B}(\mathtt U_1,\mathtt U_2)\\\sigma\mid_A=\sigma_0}}P^{|(\mathcal E_\pi)^A|}Q^{|E((G_1)^A)|+|E((G_2)^A)|}R^{\binom{n}{2}-\binom{|\mathtt W_1|+K_n}{2}}\mathbf{1}\{\mathcal G_A^1\cap \mathcal G_A^2\}\Bigg)^2\,,
	\end{align}
	where the equality follows from independence. 
	
	Due to the presence of the indicator $\mathbf{1}\{\mathcal G_A^0\}$, it is straightforward to check that \eqref{eq-term-1} is upper-bounded by $\exp(O_\lambda(n))\cdot \exp((1-\alpha\varepsilon+o(1))K_n\log n)$; see \cite[Proposition A.2]{DD23b} for an analogous calculation. We claim that \eqref{eq-term-2} is upper-bounded by $\exp(O_{\lambda,\varepsilon}(n))\cdot \big((|\mathtt U_1|-K_n)!\big)^2$. Provided this is true, we conclude the left hand side of \eqref{eq-M3} is upper bounded by
	\[
	\frac{\exp(O_\lambda(n))}{(|\mathtt U_1|!)^2}\times \exp(O_\lambda(n))\exp((1-\alpha\varepsilon+o(1))K_n\log n)\times \exp(O_{\lambda,\varepsilon}(n))\big((|\mathtt U_1|-K_n)!\big)^2\,,
	\]
	which equals to $\exp(O_{\lambda,\varepsilon}(n))\cdot \exp(-(1+\alpha\varepsilon+o(1)) K_n\log n)$. By the choice of $K_n$, this is much smaller than $\exp(-n\sqrt{\log n}-K_n\log n-9n)$ for large $n$, and thus \eqref{eq-M3} holds. 
	
	Finally we verify the claimed upper bound on \eqref{eq-term-2}. Conceptually speaking, the magic here is the expression can be interpreted as the second moment of another kind of likelihood ratio $\dif \overline{\mathcal P}_{\mathtt U_1,\mathtt U_2}/\dif \mathcal Q_{\mathtt U_1,\mathtt U_2}$, where $\overline{\mathcal P}_{\mathtt U_1,\mathtt U_2}$ is just a slight variant of $\mathcal P_{\mathtt U_1,\mathtt U_2}^{\ddagger}$ (depending on $A$ and $\sigma_0$). In light of this, it is not surprising that the we can follow the steps in Section~\ref{subsec-B.1} to obtain an effective upper bound. 
	
	Indeed, the proof proceeds essentially the same as that in \cite[Appendix A]{DD23b}. Following the arguments therein, together with some technical refinements in Section~\ref{subsec-B.1}, we obtain that \eqref{eq-term-2} is bounded from above by
	\[
	\big((|\mathtt U_1|-K_n)!\big)^2\cdot\exp(O_\lambda(n))\times \mathbb{E}_{(\pi^*,G_1^{\mathtt U_1,\mathtt U_2})\sim \overline{\mathcal P}_{\mathtt U_1,\mathtt U_2}}\frac{1}{(|\mathtt U_1|-K_n)!}\sum_{\substack{\sigma\in \operatorname{B}(\mathtt U_1,\mathtt U_2)\\\sigma\mid_A=\sigma_0}}p^{-|\mathcal J_\pi|}. 
	\]
	Here we are abusing notation as we have not defined $\overline{\mathcal P}_{\mathtt U_1,\mathtt U_2}$ formally. This can be done in a formal way (akin to \cite[Definition A.3]{DD23b}), but here the only property we will use for $\overline{\mathcal P}_{\mathtt U_1,\mathtt U_2}$ is that almost surely with $(\pi^*,G_1^{\mathtt U_1},G_2^{\mathtt U_2})\sim \overline{\mathcal P}_{\mathtt U_1,\mathtt U_2}$, $\mathcal H_{\pi^*}(\mathtt U_1;\mathtt W_1)$ is admissible satisfying condition \eqref{eq-union-bound-over-x}, and it has $A$ as a good set. In light of this, we will bound the term in expectation deterministically under such a condition. 
	
	For any $r\ge 2$, define $\mathfrak H_r$ as the set of connected subgraphs of $\mathcal H_{\pi^*}(\mathtt U_1;\mathtt W_1)$ with $r$ vertices in $\mathtt{U}_1\setminus A$ and at least one vertex in $A$. For each $H\in \mathfrak H_r$, fix a vertex $v_H\in V(H)\cap A$, and denote $\operatorname{Aut}(v_H,H)$ as the number of automorphisms of $H$ preserving $v_H$. Also recall $\mathfrak C_r,\mathfrak T_r$ as defined in Section~\ref{subsec-B.1}. Continue following the proof in \cite[Appendix A]{DD23a}, we see the averaged sum is upper-bounded by
	\[
	\prod_{C\in \bigcup\mathfrak C_r}\sum_{x=0}^n\Big(\frac{2e\operatorname{Aut}(C)t(C)}{np^{\alpha^{-1}-\varepsilon}}\Big)^x\prod_{T\in \bigcup\mathfrak T_r}\sum_{y=0}^n \Big(\frac{2ep\operatorname{Aut}(T)t(T)}{(np)^{|T|}}\Big)^y\prod_{H\in \cup\mathfrak H_r}\Big(1+\frac{\operatorname{Aut}(v_H,H)}{n^{|V(H)\setminus A|}(p/2e)^{|E(H)|}}\Big)\,.
	\]
	We have shown in the last subsection that the first two terms are $\exp(O_\varepsilon(n))$ provided $\mathcal H_{\pi^*}(\mathtt U_1;\mathtt W_1)$ is admissible and satisfies \eqref{eq-density-upper-bound}. It remains to deal with the last term. First, admissibility yields that for any $r\ge 2$,
	\[
	\sum_{H\in \mathfrak H_r}\operatorname{Aut}(v_H,H)\le 2K_n(2^{\alpha^{-1}}\log n)^{4r}\,,
	\]
	see \cite[Lemma A.11]{DD23b} for details. Therefore, the last term in the above product is upper-bounded by (using that $1+x\le \exp(x)$)
	\[
	\exp\left(\sum_{r=1}^{|\mathtt U_1|-K_n}\frac{2K_n(2^{\alpha^{-1}}\log n)^{4r}}{n^r}\cdot\max_{H\in \mathfrak H_r}(2ep^{-1})^{|E(H)|}\right)\,.
	\]
	Using the goodness of $A$, we now verify that 
	\[
	\sum_{r=1}^{|\mathtt U_1|-K_n}\frac{2K_n(2^{\alpha^{-1}}\log n)^{4r}}{n^r}\cdot\max_{H\in \mathfrak H_r}(2ep^{-1})^{|E(H)|}=o(n)\,.
	\]
	We divide the sum into two parts.\\
	\noindent \emph{Part 1}: $1\le r\le C$. In this case, we have any $H\in \mathfrak H_r$ is a tree, since otherwise there is a  vertex in $A$ has distance no more than $C$ to a short cycle in $\mathcal H_{\pi^*}(\mathtt U_1;\mathtt W_1)$. It follows that
	\[
	\sum_{r=1}^C\frac{2K_n(2^{\alpha^{-1}}\log n)^{4r}}{n^r}\cdot\max_{H\in \mathfrak H_r}(2ep^{-1})^{|E(H)|}\le \frac{2K_n}{n}\cdot\sum_{r=1}^C\left(\frac{(2^{1+4\alpha^{-1}}e\log n)^4}{np}\right)^{r-1}\,,
	\]
	which is at most $O((\log n)^C)=o(n)$.\\
	\noindent \emph{Part 2}: $C<r\le |\mathtt U_1|-K_n$. Since $A$ is a good set and hence an independent set of $\mathcal H_{\pi^*}(\mathtt U_1;\mathtt W_1)$, we have for any $H\in \mathfrak H_r$,  $|E(H)|=|E(H\setminus A)|+|E(A;H)|$. By condition \eqref{eq-union-bound-over-x}, $|E(H\setminus A)|\le (\alpha^{-1}-\varepsilon)r$. By goodness of $A$, each vertex in $V(H)\setminus A$ connect to at most one vertex in $A$ (since otherwise two vertices in $A$ have graph distance no more than $2$ on $\mathcal H_{\pi^*}(\mathtt U_1;\mathtt W_1)$). Furthermore, any two vertices in $V(H)\setminus A$ that have neighbors in $A$ are at least $(2C+1)$-faraway from each other on $H$. This implies the $C$-neighborhoods in $H$ of these vertices are disjoint. Since $H$ is connected and has more than $C$ vertices, the size of any $C$-neighborhood in $H$ is at least $C$. Hence, the number of such vertices is no more than $r/C$. Altogether we obtain $|E(H)|\le (\alpha^{-1}-\varepsilon+C^{-1})r$, and so
	\[
	\sum_{r=C+1}^{|\mathtt U_1|-K_n}\frac{2K_n(2^{\alpha^{-1}}\log n)^{4r}}{n^r}\cdot\max_{H\in \mathfrak H_r}(2ep^{-1})^{|E(H)|}\le 2K_n\cdot\sum_{r=C+1}^{|\mathtt U_1|-K_n}\left( \frac{2^{1+4\alpha^{-1}}e(\log n)^4}{(np^{\alpha^{-1}-\varepsilon+C^{-1}})}\right)^r\,.
	\]
	Recalling $p=n^{-\alpha+o(1)}$, by our choice of $C$ in \eqref{eq-C} we have for large enough $n$, the above expression is upper-bounded by $2K_n=o(n)$. 
	
	We have verified the sum is $o(n)$. Therefore, the third term is $\exp(o(n))$ and thus the claim follows. The proof of Proposition~\ref{prop-posterior-bound-under-Q} is completed.

	\bibliographystyle{alpha}
	\small
	\bibliography{bib}

\newcommand{\etalchar}[1]{$^{#1}$}
\begin{thebibliography}{WWXY22}

\bibitem[AS04]{AS04}
D.~Aldous and J.~M. Steele.
\newblock {\em The Objective Method: Probabilistic Combinatorial Optimization and Local Weak Convergence}, pages 1--72.
\newblock Springer Berlin Heidelberg, Berlin, Heidelberg, 2004.

\bibitem[AS16]{AS16}
V.~Anantharam and J.~Salez.
\newblock The densest subgraph problem in sparse random graphs.
\newblock {\em Annals of Applied Probability}, 26(1):305--327, 2016.

\bibitem[BBM05]{BBM05}
A.~Berg, T.~Berg, and J.~Malik.
\newblock Shape matching and object recognition using low distortion correspondences.
\newblock In {\em IEEE Computer Society Conference on Computer Vision and Pattern Recognition (CVPR)}, volume~1, pages 26--33, 2005.

\bibitem[BCL{\etalchar{+}}19]{BCLS19}
B.~Barak, C.~N. Chou, Z.~Lei, T.~Schramm, and Y.~Sheng.
\newblock (nearly) efficient algorithms for the graph matching problem on correlated random graphs.
\newblock In {\em Advances in Neural Information Processing Systems (NIPS)}, volume~32. Curran Associates, Inc., 2019.

\bibitem[BH22]{BH22}
G.~Bresler and B.~Huang.
\newblock The algorithmic phase transition of random k-sat for low degree polynomials.
\newblock In {\em 62nd Annual Symposium on Foundations of Computer Science (FOCS)}, pages 298--309, 2022.

\bibitem[BHK{\etalchar{+}}19]{BHKKMA19}
B.~Barak, S.~Hopkins, J.~Kelner, P.~K. Kothari, A.~Moitra, and A.~Potechin.
\newblock A nearly tight sum-of-squares lower bound for the planted clique problem.
\newblock {\em SIAM Journal on Computing}, 48(2):687--735, 2019.

\bibitem[CDGL24]{CDGL24}
G.~Chen, J.~Ding, S.~Gong, and Z.~Li.
\newblock A computational transition for detecting correlated stochastic block models by low-degree polynomials.
\newblock {\em Preprint}, 2024.
\newblock arXiv:2409.00966.

\bibitem[CK16]{CK16}
D.~Cullina and N.~Kiyavash.
\newblock Improved achievability and converse bounds for erdős–rényi graph matching.
\newblock {\em SIGMETRICS '16: Proceedings of the 2016 ACM SIGMETRICS International Conference on Measurement and Modeling of Computer Science}, pages 63--72, 2016.

\bibitem[CK17]{CK17}
D.~Cullina and N.~Kiyavash.
\newblock Exact alignment recovery for correlated erdős–rényi graphs.
\newblock {\em preprint}, 2017.
\newblock arXiv:1711.06783.

\bibitem[CKMP20]{CKMP20}
D.~Cullina, N.~Kiyavash, P.~Mittal, and H.~V. Poor.
\newblock Partial recovery of erdős–rényi graph alignment via k-core alignment.
\newblock In {\em SIGMETRICS '20}, pages 99--100. Association for Computing Machinery, 2020.

\bibitem[CS20]{CS20}
B.~Chin and A.~Sly.
\newblock Optimal recovery of block models with q communities.
\newblock {\em Preprint}, 2020.
\newblock arXiv.2010.10672.

\bibitem[CSS06]{CSS06}
T.~Cour, P.~Srinivasan, and J.~Shi.
\newblock Balanced graph matching.
\newblock In B.~Schölkopf, J.~Platt, and T.~Hoffman, editors, {\em Advances in Neural Information Processing Systems (NIPS)}, volume~19. MIT Press, 2006.

\bibitem[DCKG19]{DCKG19}
O.~E. Dai, D.~Cullina, N.~Kiyavash, and M.~Grossglauser.
\newblock Analysis of a canonical labeling algorithm for the alignment of correlated erdős–rényi graphs.
\newblock {\em Proc. ACM Meas. Anal. Comput. Syst.}, 3(2), 2019.

\bibitem[DD23a]{DD23a}
J.~Ding and H.~Du.
\newblock Detection threshold for correlated erdős–rényi graphs via densest subgraph.
\newblock {\em IEEE Transactions on Information Theory}, 69(8):5289--5298, 2023.

\bibitem[DD23b]{DD23b}
J.~Ding and H.~Du.
\newblock Matching recovery threshold for correlated random graphs.
\newblock {\em Annals of Statistics}, 51(4):1718--1743, 2023.

\bibitem[DDG24]{DDG24}
J.~Ding, H.~Du, and S.~Gong.
\newblock A polynomial-time approximation scheme for the maximal overlap of two independent erdős–rényi.
\newblock {\em Random Structures and Algorithms}, 65(1):220--257, 2024.

\bibitem[DDL23]{DDL23+}
J.~Ding, H.~Du, and Z.~Li.
\newblock Low-degree hardness of detection for correlated erd˝os-r´enyi graphs.
\newblock {\em preprint}, 2023.
\newblock arXiv:2311.15931.

\bibitem[DL23]{DL23}
J.~Ding and Z.~Li.
\newblock A polynomial-time iterative algorithm for random graph matching with non-vanishing correlation.
\newblock {\em Preprint}, 2023.
\newblock arXiv:2306.00266.

\bibitem[DMWX21]{DMWX21}
J.~Ding, Z.~Ma, Y.~Wu, and J.~Xu.
\newblock Efficient random graph matching via degree profiles.
\newblock {\em Probability Theory and Related Fields}, 179(1-2):29--115, 2021.

\bibitem[DWF23]{DFW23}
J.~Ding, Y.~Wang, and Y.~Fei.
\newblock Efficiently matching random inhomogeneous graphs via degree profiles.
\newblock {\em Preprint}, 2023.
\newblock arXiv:2310.10441.

\bibitem[FMWX23]{FMWX23a}
Z.~Fan, C.~Mao, Y.~Wu, and J.~Xu.
\newblock Spectral graph matching and regularized quadratic relaxations i: Algorithm and theory.
\newblock {\em Foundations of Computational Mathematics}, 23(5):1511--1565, 2023.

\bibitem[FQM{\etalchar{+}}16]{FQMKJ16}
S.~Feizi, G.~Quon, M.~Medard, M.~Kellis, and A.~Jadbabaie.
\newblock Spectral alignment of networks.
\newblock {\em Preprint}, 2016.
\newblock arXiv:1602.04181.

\bibitem[GLM21]{GLM21}
L.~Ganassali, M.~Lelarge, and L.~Massoulié.
\newblock Impossibility of partial recovery in the graph alignment problem.
\newblock {\em Proceedings of Machine Learning Research}, 124:1-23, 2021.

\bibitem[GML24]{GML24}
L.~Ganassali, L.~Massoulié, and M.~Lelarge.
\newblock Correlation detection in trees for planted graph alignment.
\newblock {\em Annals of Applied Probability}, 34(3):2799--2843, 2024.

\bibitem[GMS24]{GMS24}
L.~Ganassali, L.~Massoulié, and G.~Semerjian.
\newblock Statistical limits of correlation detection in trees.
\newblock {\em Annals of Applied Probability}, 34(4):3701--3734, 2024.

\bibitem[Haj90]{Haj90}
B.~Hajek.
\newblock Performance of global load balancing by local adjustment.
\newblock {\em IEEE Transaction on Information Theory}, 36(6):1398--1414, 1990.

\bibitem[Haj96]{Haj96}
B.~Hajek.
\newblock Balanced loads in infinite networks.
\newblock {\em Annals of Applied Probability}, 6(1):48--75, 1996.

\bibitem[HKP{\etalchar{+}}17]{HKPRS17}
S.~Hopkins, P.~K. Kothari, A.~Potechin, P.~Raghavendra, T.~Schramm, and D.~Steurer.
\newblock The power of sum-of-squares for detecting hidden structures.
\newblock In {\em 58th Annual Symposium on Foundations of Computer Science (FOCS)}, pages 720--731, 2017.

\bibitem[HL23]{HM23}
G.~Hall and Massoulié L.
\newblock Partial recovery in the graph alignment problem.
\newblock {\em Operations Research}, 71(1):259--272, 2023.

\bibitem[HNM05]{HNM05}
A.~Haghighi, A.~Ng, and C.~Manning.
\newblock Robust textual inference via graph matching.
\newblock In {\em Proceedings of Human Language Technology Conference and Conference on Empirical Methods in Natural Language Processing}, pages 387--394, 2005.

\bibitem[Hop18]{Hop18}
Samuel Hopkins.
\newblock {\em Statistical Inference and the Sum of Squares Method}.
\newblock Phd thesis, Cornell University, 2018.

\bibitem[HS17]{HS17}
S.~Hopkins and D.~Steurer.
\newblock Efficient bayesian estimation from few samples: community detection and related problems.
\newblock In {\em 58th Annual Symposium on Foundations of Computer Science (FOCS)}, pages 379--390, 2017.

\bibitem[HSY24]{HSY24}
D.~Huang, X.~Song, and P.~Yang.
\newblock Information-theoretic thresholds for the alignments of partially correlated graphs.
\newblock {\em Preprint}, 2024.
\newblock arXiv:2406.05428.

\bibitem[Li25]{Li25}
Zhangsong Li.
\newblock Algorithmic contiguity from low-degree conjecture and applications in correlated random graphs.
\newblock {\em Preprint}, 2025.
\newblock arXiv:2502.09832.

\bibitem[LM19]{LM19}
M.~Lelarge and L.~Miolane.
\newblock Fundamental limits of symmetric low-rank matrix estimation.
\newblock {\em Probability Theory and Related Fields}, 173:859--929, 2019.

\bibitem[MRT23]{MRT23}
C.~Mao, M.~Rudelson, and K.~Tikhomirov.
\newblock Exact matching of random graphs with constant correlation.
\newblock {\em Probability Theory and Related Fields}, 186(2):327--389, 2023.

\bibitem[MSN16]{MNS16}
E.~Mossel, A.~Sly, and J.~Neeman.
\newblock Belief propagation, robust reconstruction, and optimal recovery of block models.
\newblock {\em Annals of Applied Probability}, 26:2211--2256, 2016.

\bibitem[MV21]{MV21}
A.~Montanari and R.~Venkataramanan.
\newblock Estimation of low-rank matrices via approximate message passing.
\newblock {\em Annals of Statistics}, 49(1):321--345, 2021.

\bibitem[MWXY21]{MWXY21+}
C.~Mao, Y.~Wu, J.~Xu, and S.~H. Yu.
\newblock Testing network correlation efficiently via counting trees.
\newblock {\em Preprint}, 2021.
\newblock arXiv:2110.11816.

\bibitem[MWXY23]{MWXY23}
C.~Mao, Y.~Wu, J.~Xu, and S.~H. Yu.
\newblock Random graph matching at otter’s threshold via counting chandeliers.
\newblock In {\em Proceedings of the 55th Annual ACM Symposium on Theory of Computing (STOC)}, pages 1345--1356, 2023.

\bibitem[MX20]{MX20}
E.~Mossel and J.~Xu.
\newblock Seeded graph matching via large neighborhood statistics.
\newblock {\em Random Structures and Algorithms}, 57(3):570--611, 2020.

\bibitem[NS08]{NS08}
A.~Narayanan and V.~Shmatikov.
\newblock Robust de-anonymization of large sparse datasets.
\newblock In {\em IEEE Symposium on Security and Privacy (ISSP)}, pages 111--125, 2008.

\bibitem[NS09]{NS09}
A.~Narayanan and V.~Shmatikov.
\newblock De-anonymizing social networks.
\newblock In {\em IEEE Symposium on Security and Privacy (ISSP)}, pages 173--187, 2009.

\bibitem[PG11]{PG11}
P.~Pedarsani and M.~Grossglauser.
\newblock On the privacy of anonymized networks.
\newblock In {\em Proceedings of the 17th ACM SIGKDD International Conference on Knowledge Discovery and Data Mining}, pages 1235--1243, New York, NY, USA, 2011. Association for Computing Machinery.

\bibitem[RS23]{RS23}
M.~Rácz and A.~Sridhar.
\newblock Matching correlated inhomogeneous random graphs using the k-core estimator.
\newblock {\em Preprint}, 2023.
\newblock arXiv:2302.05407.

\bibitem[SGE17]{SGE17}
F.~Shirani, S.~Garg, and E.~Erkip.
\newblock Seeded graph matching: Efficient algorithms and theoretical guarantees.
\newblock In {\em 2017 51st Asilomar Conference on Signals, Systems, and Computers}, pages 253--257, 2017.

\bibitem[SXB08]{SXB08}
R.~Singh, J.~Xu, and B.~Berger.
\newblock Global alignment of multiple protein interaction networks with application to functional orthology detection.
\newblock {\em Proceedings of the National Academy of Sciences of the United States of America}, 105:12763--12768, 2008.

\bibitem[VCL{\etalchar{+}}15]{VCP15}
J.~T. Vogelstein, J.~M. Conroy, V.~Lyzinski, L.~J. Podrazik, S.~G. Kratzer, E.~T. Harley, D.~E. Fishkind, R.~J. Vogelstein, and C.~E. Priebe.
\newblock Fast approximate quadratic programming for graph matching.
\newblock {\em PLOS ONE}, 10(4):1--17, 2015.

\bibitem[WWXY22]{WWXY22}
H.~Wang, Y.~Wu, J.~Xu, and I.~Yolou.
\newblock Random graph matching in geometric models: the case of complete graphs.
\newblock In {\em Proceedings of Thirty Fifth Conference on Learning Theory}, pages 3441--3488. PLMR, 2022.

\bibitem[WXY22]{WXY22}
Y.~Wu, J.~Xu, and S.~H. Yu.
\newblock Settling the sharp reconstruction thresholds of random graph matching.
\newblock {\em IEEE Transactions on Information Theory}, 68(8):5391--5417, 2022.

\bibitem[WXY23]{WXY23}
Y.~Wu, J.~Xu, and S.~H. Yu.
\newblock Testing correlation of unlabeled random graphs.
\newblock {\em Annals of Applied Probability}, 33(4):2519--2558, 2023.

\end{thebibliography}

\end{document}